\documentclass{article}
\usepackage[utf8]{inputenc}

\usepackage[english]{babel}

\usepackage[utf8]{inputenc}
\usepackage[T1]{fontenc}

\usepackage{amssymb}
\usepackage{stmaryrd}
\usepackage{amsmath,amsfonts}
\usepackage[tikz]{bclogo}
\usepackage[top=3cm,bottom=3cm,left=2.5cm,right=2.5cm]{geometry}

\usepackage{tikz}
\usepackage{tkz-tab}
\usepackage{caption}
\usepackage{latexsym}
\usepackage{amssymb}
\usepackage{amsmath}
\usepackage{subfig}
\usetikzlibrary{shapes.geometric}
\usetikzlibrary{decorations.pathreplacing}

\usepackage{lmodern}
\usepackage{textcomp}
\usepackage{mathtools, bm}
\usepackage{amssymb, bm}
\usepackage[unq]{unique}
\usepackage{enumerate}
\usepackage{comment}
\usepackage[breaklinks]{hyperref}

\usepackage[numbers]{natbib}  

\usepackage[breaklinks]{hyperref}
\usepackage[numbers]{natbib}
\usepackage{graphicx} 

\usepackage{amsthm}
\usepackage{multirow}

\usepackage{cleveref}

\newtheorem{theorem}{Theorem}[section]
\newtheorem{definition}[theorem]{Definition}
\newtheorem{lemma}[theorem]{Lemma}

\newtheorem{claim}[theorem]{Claim}

\newtheorem{Corollary}[theorem]{Corollary}

\newtheorem{observation}[theorem]{Observation}

\newcommand{\eps}{\varepsilon}
\newcommand{\Geom}{Geom}

\newcommand{\Po}{Po}
\newcommand{\R}{\mathbb{R}}

\DeclareMathOperator{\ind}{\mathbf{1}}

\title{Reconstructing a giant component of a point set in $\R$}
\author{Julien Portier \footnote{\href{mailto:jp899@cam.ac.uk}{julien.portier@epfl.ch}, Ecole Polytechnique Federale de Lausanne (EPFL), CH-1015 Lausanne,
Switzerland}}
\date{}

\begin{document}

\maketitle

\vspace{-1em}

\begin{abstract}
Let $V \subset \R$ be a finite set with $|V| = n $ and suppose we are given each pairwise distance independently with probability $p$. We show that if $p = (1+\eps)/n$, for some fixed $\eps >0$, then we can reconstruct a subset of size $\Omega_{\eps}(n)$, up to translation and reflection, with high probability. 
This confirms a conjecture posed by Gir\~ao, Illingworth, Michel, Powierski, and Scott.

We also study a deterministic variant proposed by Benjamini and Tzalik. 
We show that if we are given $m$ distinct pairwise distances of a point set $V \subset \R$ with $|V|=n$, then we can reconstruct a subset of size $\Omega(m/ (n \log n)) $, up to translation and reflection. 
Moreover, we show that this is optimal, which also disproves a conjecture posed by Benjamini and Tzalik.

\end{abstract}

\section{Introduction}



Benjamini and Tzalik \cite{benjamini2022determining} recently investigated the following question in the context of graph theory and distance reconstruction. 
Consider a set $V$ of $n$ distinct points in $\mathbb{R}$, where the only information available about $V$ consists of the pairwise distances between some of the pairs of points.
The central problem is: how much of the structure of $V$ can be deduced from this incomplete distance information? We first emphasize that we are concerned here with arbitrary point sets, and do not restrict ourselves to generic point sets. The two settings are fundamentally different, and we elaborate on these distinctions in the final paragraph of this introduction. \\

Our first main result addresses the probabilistic setting, when the pairwise distances are revealed randomly. 
This result confirms a conjecture by Gir{\~a}o, Illingworth, Michel, Powierski, and Scott.

\begin{theorem}
\label{thm:mainSparseRegime}
For $\eps >0$, let $V \subset \R$ with $|V| = n$. Suppose we are given each pairwise distance independently with probability $p= (1+\eps)/n$. Then we can reconstruct a subset of size $\Omega(\eps^3n)$ up to translation and reflection, with high probability as $n\rightarrow \infty$.
\end{theorem}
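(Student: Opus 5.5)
The approach is to exhibit, inside the giant component of $G(n,p)$, a structure that is \emph{globally rigid in $\R$ for arbitrary (non-generic) placements} and has linear size. Fix the unknown embedding $x:V\to\R$ and let $G$ be the revealed graph, which is distributed as $G(n,p)$ with $p=(1+\eps)/n$. In dimension one, a set of points is determined up to isometry by a set of edges exactly when every alternative embedding respecting those edge lengths is congruent to $x$. A cycle $v_1\dots v_k$ gives the constraint $\sum \pm d(v_i,v_{i+1})=0$ for the true signs. The alternative embeddings come from flipping signs consistently, so the basic reconstructible gadget is a cycle whose sign pattern is forced, i.e.\ no other choice of signs sums to zero. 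Two cycles sharing an edge, or cycles glued along paths, then propagate rigidity. The plan is therefore to (i) find many short cycles, or better a $2$-core-like structure, in the giant, (ii) show that with positive probability each relevant cycle is ``sign-rigid'', and (iii) glue the rigid pieces into one reconstructible set of size $\Omega(\eps^3 n)$.

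\textbf{Step 1 (structure of the giant).} I would use the standard facts that the giant component has size $\Theta(\eps n)$ and its $2$-core has size $\Theta(\eps^2 n)$. The kernel, obtained from the $2$-core by suppressing degree-$2$ vertices, is a random multigraph with minimum degree $3$ and $\Theta(\eps^3 n)$ vertices. Each kernel edge corresponds to a path of length about $\Theta(1/\eps)$. This explains the $\eps^3$ in the statement: the target is to reconstruct a positive fraction of the kernel vertices, or of comparable objects.

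\textbf{Step 2 (rigidity of kernel-level structures).} Using the randomness of which pairs are revealed, independently of $x$, I would show that the positions of kernel vertices can be pinned down. The kernel is a $3$-regular-ish random graph, and hence an expander with many edge-disjoint cycles through most vertices. For a path $P$ in the $2$-core between kernel vertices $u,w$, the data determine only $\sum \pm$ of the edge lengths. The key point is that a vertex of degree $\ge 3$ with three disjoint paths to an already-reconstructed set forces consistency. I would set up an inductive exploration: start from a small reconstructed cycle, then repeatedly attach kernel vertices that have two independent paths into the current rigid set, where the path-length sign combination is uniquely consistent. This resembles a $2$-neighbour or $3$-neighbour bootstrap percolation on the kernel, and I would show it spreads to a linear fraction via expansion of random cubic-type graphs.

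\textbf{Step 3 (main obstacle: non-generic coincidences).} Because $V$ is arbitrary, many signed sums of distances can vanish spuriously. For example, in an arithmetic progression, different sign patterns on a cycle may both sum to zero, and then a "rigid" step is in fact ambiguous. I expect this to be the hardest part. My first attempt would exploit the random choice of edges: conditioned on the kernel shape, the actual vertices on the paths are essentially a uniformly random sequence of distinct points. I would then bound the probability that a given alternative sign vector also yields zero, via an anti-concentration (Littlewood--Offord/Erdős type) argument on the randomly chosen path vertices. This should show that a bounded number of extra coincidences occur per step with small probability. A first-moment count would then show that only $o(\eps^3 n)$ kernel vertices are blocked by coincidences, so the bootstrap still reaches $\Omega(\eps^3 n)$ vertices. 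Finally, I would include the degree-$2$ vertices on paths between two reconstructed endpoints only when forced; they are not needed for the stated bound.
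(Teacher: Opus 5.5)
\textbf{There is a genuine gap.} The local, piece-by-piece propagation of rigidity in Steps 2--3 cannot reach linear size for arbitrary $V$.

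First, the growth rule fails. Your rule ``attach a kernel vertex once it has two (or three) independent paths into the current rigid set $R$'' is $2$-neighbour (or $3$-neighbour) bootstrap percolation on the kernel. The kernel is a sparse, locally tree-like random graph with almost all degrees equal to $3$. From a sublinear seed such a process stays sublinear, and expansion does not help. For instance, in a cubic graph, adding a vertex with at least two neighbours in $A$ decreases $e(A,A^c)$ by at least one. So the closure of a seed $A_0$ has at most $|A_0|+e(A_0,A_0^c)\le 4|A_0|$ vertices. You could instead allow longer ears, i.e.\ paths leaving $R$ through several kernel vertices and returning to a different vertex of $R$. Paths in the $2$-core branch at rate only $1+\Theta(\eps)$ per step, and the $2$-core has $\Theta(\eps^2 n)$ vertices. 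So heuristically the expected number of ears of length at most $L$ from a set of size $s$ is of order $s^2(1+\Theta(\eps))^L/(\eps^2 n)$. Starting from a small seed (e.g.\ one short cycle), you therefore need $L$ of order $\log n/\eps$.

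Second, at these lengths Step 3 cannot work. A cycle or ear of length $L$ has about $2^L$ alternative sign patterns. For arbitrary $V$, each pattern is consistent with probability at best of order $1/n$: it amounts to a single linear relation $\sum_{j\in J^+}x_{u_j}=\sum_{j\in J^-}x_{u_j}$ among the randomly placed vertices. For $V$ an arithmetic progression this probability really is $\Theta(1/(\sqrt{|J|}\,n))$, so no Littlewood--Offord-type bound can improve it. The expected number of spurious consistent sign patterns is then about $2^L/(\sqrt{L}\,n)=n^{\Theta(1/\eps)}$. Thus a typical cycle through a kernel vertex is not sign-rigid, and a per-step first-moment count is hopeless.

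The missing idea is a \emph{global} certificate of non-reconstructibility. The paper shows (Lemma~\ref{lem:PropertyWitness}) that if some distance is not reconstructible, there is a witness: a connected partition $S_1\cup\dots\cup S_k$ with at most half the edges crossing, in which all crossing edges between the same two blocks have the same signed length. The paper then takes a union bound over all such partitions of an expanding $(n,\eps,\gamma)$-good subgraph of the $2$-core. This subgraph is built from kernel expansion, the resilience lemma, and removal of long bare paths, short cycles and high-degree vertices. Under random relabelling, a fixed partition is a witness with probability at most $(2/n)^{V'-C_2-(k-1)}$. Expansion and large girth make $V'-C_2-(k-1)$ roughly linear in the size of the super-block, up to $\log\log n$ factors. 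This beats the $n(1000\log\log n)^{2\sum s_i}$ count of partitions. Rigidity of the degree-$\ge 3$ vertices thus comes from the expansion of the whole structure, not from any individual cycle or ear.
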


This result gives the sharp threshold for the reconstruction of a ``giant component'' and is sharp up to the power of $\eps$. Indeed one can show that except for some particular instances of $X$, any reconstructible set must be a subset of the $2$-core of $G(n,p)$, which has $\Theta( \eps^2 n) $ vertices when $p = (1+\eps)/n$.
In fact, we believe that a constant proportion of the $2$-core can be reconstructed and thus conjecture that the size of the reconstructible set in our first result can be improved from $\Omega(\eps^3 n)$ to $\Omega(\eps^2 n)$.

This paper comes on the back of several recent results in this area. Benjamini and Tzalik \cite{benjamini2022determining} showed that if $p = \Omega(\log n /n) $ then one can reconstruct all of $V$, with high probability. This result was then improved by 
Gir{\~a}o, Illingworth, Michel, Powierski, and Scott \cite{girao2023reconstructing} who showed that the same holds all the way to the threshold for every vertex having degree at least $2$, that is $np = \log n+\log \log n+\omega(1)$. This last result was then strengthened by Montgomery, Nenadov, Szabó, and the author \cite{montgomery2024global}, who showed that, one can in fact ``universally'' reconstruct the entirety of $V$ in this regime, for any point set $V \subset \R$ of size $n$, a notion which we will return to later.

The topic of sparser random graphs was also studied by  Gir{\~a}o, Illingworth, Michel, Powierski, and Scott, who  investigated the threshold that guarantees the existence of a reconstructible subset of linear size. Towards this they showed that one can take $p = 42/n$ in the setting of our theorem and conjectured that the constant $42$ could be reduced to $1+\eps$ with any fixed $\eps>0$. The first topic of this paper is to prove \Cref{thm:mainSparseRegime}, which confirms this conjecture. \\

Benjamini and Tzalik \cite{benjamini2022determining} also initiated the study of reconstruction in the deterministic case, where the distances are revealed according to an arbitrary graph.
To state our second main result, we need to introduce the notion of global rigidity in $\mathbb{R}$. 

We say that a graph $G=(V,E)$ is {\em globally rigid in $\mathbb{R}$} if for every injective functions $f,g :V\rightarrow \mathbb{R}$ satisfying $|f(x)-f(y)| = |g(x)-g(y)|$ for every edge $xy\in E$, we also have $|f(x)-f(y)| = |g(x)-g(y)|$ for every pair $x,y \in V$. 
In other words, a graph $G$ is globally rigid in $\mathbb{R}$ if, for any embedding of its vertices in $\mathbb{R}$, the distances along its edges uniquely determine the embedding up to isometry.
We establish the following result.

\begin{theorem}
\label{thm:weakBT}
    Let $G$ be a graph on $n \geq 2$ vertices and $m$ edges.
    Then $G$ contains a subgraph on at least $c\frac{m}{n \log n}$ vertices that is globally rigid in $\mathbb{R}$, where $c=2 \log 2$.
\end{theorem}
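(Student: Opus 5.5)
The plan is to build globally rigid subgraphs by a bootstrap process, and then to show by a counting argument that some such process must grow to size $\Omega(m/(n\log n))$.

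\textbf{Step 1: a gluing lemma.} I would first isolate a local rule that preserves global rigidity in $\R$. A single edge is globally rigid. Now suppose $S$ is globally rigid and $u\notin S$ is adjacent to two distinct vertices $a,b\in S$. Once $S$ is fixed up to isometry, $u$ must lie in $\{a\pm r_1\}\cap\{b\pm r_2\}$, where $r_1,r_2$ are the revealed distances. If two distinct positions $x\neq y$ were both possible, then each of $a$ and $b$ would have to be the midpoint of $x$ and $y$. That gives $x+y=2a=2b$, which is impossible since $a\neq b$. Hence $S\cup\{u\}$ is globally rigid. In the same way, the union of two globally rigid sets that share at least two vertices is globally rigid. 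Consequently the \emph{2-neighbour bootstrap closure} of any edge spans a globally rigid subgraph.

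\textbf{Step 2: block structure.} Next I would consider the maximal vertex sets $B_1,\dots,B_t$ obtained as bootstrap closures of edges. By the merging rule, two distinct blocks share at most one vertex. No vertex outside a block has two neighbours in it. Every edge lies inside some block. I would also pass to a subgraph of minimum degree at least $m/n$ by repeatedly deleting low-degree vertices. Then each vertex $v$ lies in blocks whose sizes sum to at least $m/n$, and each of these blocks meets the others at $v$ only.

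\textbf{Step 3: growth forces a large block.} Suppose every block has fewer than $k$ vertices. Then each vertex lies in at least $m/(nk)$ blocks, with pairwise disjoint neighbourhoods inside them. I would explore from a vertex by breadth-first search in the block hypergraph. The aim is to show that the explored set grows by a constant factor, at least $2$, at each step, as long as $k\lesssim m/(n\log n)$. This would make the number of vertices exceed $n$ after about $\log_2 n$ rounds, a contradiction. The constant $c=2\log 2$ suggests exactly this doubling over $\log_2 n$ levels, with $\delta\ge 2m/n$ per vertex if we use average degree instead.

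\textbf{Main obstacle.} The hard part is Step 3. Short Berge cycles in the block hypergraph do not merge under the 2-neighbour rule alone. For example, $C_4$ is not globally rigid, so the breadth-first search can fold back on itself. I would try to control this with a weighting or entropy-style argument rather than pure expansion. Concretely, I would assign each vertex the reciprocal of its block size and show that the total weight of the closure of a random edge is at least $c\,m/(n\log n)$ on average. The $\log n$ loss would come from summing over dyadic ranges of block sizes, of which there are $\log_2 n$. If this fails, I would look for a stronger gluing rule, for instance for cycles of blocks carrying additional chords, to rule out the folding configurations.
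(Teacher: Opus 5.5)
There is a genuine gap, and it goes beyond Step 3 being unproven: the statement Step 3 aims for is false.

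Your only ways of producing globally rigid sets are adding a vertex with two neighbours in the current set, and merging two sets that share two vertices. Starting from an edge $ab$, the first rule needs a common neighbour of $a$ and $b$, i.e.\ a triangle. The second rule never applies to two distinct edges. Hence in every triangle-free graph all your blocks are single edges.

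Take $G=K_{n/2,n/2}$. Here $m=n^2/4$, so the theorem demands a globally rigid subgraph on $\Omega(n/\log n)$ vertices. Yet every block has size $2$ and every vertex lies in $n/2$ blocks. The breadth-first exploration just folds back through $4$-cycles, so no contradiction can arise.

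The large globally rigid subgraphs that do exist here are bipartite; for example, $K_{3,3}$ is globally rigid in $\R$, as one checks from Theorem~\ref{thm:GaramvolgyiCriteria}. Such subgraphs are invisible to 2-neighbour bootstrap percolation. So no weighting of bootstrap blocks can give the theorem, and a ``stronger gluing rule'' would in effect have to recover the full characterisation of global rigidity in $\R$.

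The missing idea is to work with the obstruction to global rigidity rather than with a constructive closure, and to induct on $n$. The paper's argument runs as follows.
\begin{enumerate}
\item If $G$ is not globally rigid, Theorem~\ref{thm:GaramvolgyiCriteria} gives a NAC-colouring whose red components $R_1,\dots,R_k$ and blue components $B_1,\dots,B_\ell$ satisfy $|R_i\cap B_j|\le 1$. In particular each component has at most $n-1$ vertices.
\item Set $\alpha=m/(n\log n)$. If some component with $r\ge 2$ vertices spans at least $\alpha r\log r$ edges of its colour, the induction hypothesis applied to it gives the required $c\alpha$ vertices.
\item Otherwise $m<\alpha\bigl(\sum_i r_i\log r_i+\sum_j b_j\log b_j\bigr)$.
\item The map sending $v$ to the pair $(i,j)$ with $v\in R_i\cap B_j$ is a bijection onto a set $X$ of $n$ pairs. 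By AM--GM,
\[
\sum_i r_i\log r_i+\sum_j b_j\log b_j=\sum_{(i,j)\in X}\log(r_ib_j)\le n\log\bigl(\tfrac{1}{n}\textstyle\sum_i r_i\sum_j b_j\bigr)=n\log n ,
\]
which contradicts step 3.
\end{enumerate}

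Your instinct that an entropy-type inequality lies behind the $\log n$ was right. However, it must be applied to the monochromatic components of a NAC-colouring, not to bootstrap blocks. Also, $c=2\log 2$ is not a doubling constant. It only ensures the bound is trivially met when $G$ itself is globally rigid, since $cm/(n\log n)\le n$ when $m\le n^2/2$.
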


For dense graphs, we can obtain slightly better bounds.

\begin{theorem}
    \label{thm:weakBTdense}
    Fix $\delta > 0$.
    Let $G$ be a graph on $n \geq 2$ vertices and $m \geq n^{1+\delta}$ edges.
    Then $G$ contains a subgraph on $\Omega_{\delta}(\frac{m}{n})$ vertices that is globally rigid in $\mathbb{R}$.
\end{theorem}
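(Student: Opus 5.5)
The plan is to rerun the argument behind Theorem~\ref{thm:weakBT}, but with a recursion whose depth is bounded in terms of $\delta$ rather than of order $\log n$. I expect that proof to produce the factor $1/\log n$ as follows. The graph is split dyadically, either by average degree or by the scales of the distances. At each level one gains a globally rigid piece whose size is a constant fraction of the current average degree, and a constant is lost at every level. Summed or pigeonholed over $\Theta(\log n)$ levels, this gives $m/(n\log n)$. The aim is to show that when $m \ge n^{1+\delta}$ only $O(1/\delta)$ levels are needed. The accumulated loss is then a constant $c(\delta)$, and the bound becomes $\Omega_\delta(m/n)$.

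The basic tools, which I take from the earlier discussion or check directly, are these.
\begin{enumerate}[(i)]
\item A triangle is globally rigid in $\mathbb{R}$.
\item If two globally rigid graphs share at least two vertices, their union is globally rigid.
\end{enumerate}
It follows that if a vertex $v$ is adjacent to every vertex of a connected set $C$, then $G[\{v\}\cup C]$ is globally rigid. Indeed, the pieces can be glued triangle by triangle, consecutive triangles sharing an edge $vw$.

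The concrete steps are as follows.
\begin{enumerate}[(1)]
\item Pass to a subgraph $H$ of minimum degree at least $d/2$, where $d = 2m/n \ge 2n^{\delta}$.
\item Pick a vertex $v$ and look at the graph induced on its neighbourhood $N(v)$.
\item If some vertex has a component of $G[N(v)]$ of size $\Omega(d)$, the cone over that component is the desired subgraph.
\item Otherwise the neighbourhoods are sparse or fragmented. In that case I would use the embedding itself, in the style of the proof of Theorem~\ref{thm:weakBT}. Edges of $H$ with equal or comparable lengths are grouped together, so that a dense subfamily forces new triangles or rigid $4$-cycles. Alternatively, one passes to an auxiliary graph on fewer vertices in which the average degree relative to the number of vertices grows by a factor of roughly $n^{\delta/2}$.
\end{enumerate}
Each passage of step (4) costs a constant factor in the size of the rigid piece we can eventually extract. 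Since the density exponent cannot exceed $1$, the process must terminate in case (3) after at most about $2/\delta$ rounds.

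The main obstacle is making step (4) quantitative in exactly this way. Theorem~\ref{thm:weakBT} only guarantees progress by a constant factor per round. For the present statement I need each round to either finish or improve the density by a polynomial factor $n^{\Omega(\delta)}$, while losing only a constant. The first thing I would try is a pigeonhole over $O(1/\delta)$ geometric scales, as opposed to $\log n$ dyadic ones: use blocks of ratio $n^{\delta/2}$ instead of $2$. Then I would check that the gluing argument of Theorem~\ref{thm:weakBT} still goes through when the consecutive degree classes differ by such a large factor.
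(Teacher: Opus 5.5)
\textbf{There is a genuine gap: step (4), which carries all of the difficulty, is not an argument.} Your tools (i), (ii) and the cone construction are correct, but they only help when neighbourhoods contain large connected pieces. For a triangle-free graph with $m\ge n^{1+\delta}$ edges, for instance a dense bipartite graph, every $G[N(v)]$ is edgeless, so step (3) never fires. The whole proof then rests on step (4).

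\textbf{The mechanism you suggest in step (4) cannot work as stated.} Global rigidity in $\mathbb{R}$ is a property of the graph alone: it quantifies over all pairs of injective embeddings. There is no embedding whose edge lengths you can group. A subgraph chosen by pigeonholing the lengths of one particular embedding has no reason to be globally rigid.

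\textbf{Your premise about \Cref{thm:weakBT} is also mistaken.} Its proof is not a dyadic recursion with a constant loss per level. It is an induction driven by Garamv\"olgyi's criterion (\Cref{thm:GaramvolgyiCriteria}). The $\log n$ comes from the AM--GM bound $\sum_i r_i\log r_i+\sum_j b_j\log b_j\le n\log n$, so there are no levels to compress. Moreover, \Cref{thm:CounterexampleBT} shows that this loss is genuinely necessary at average degree $\log n$.

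\textbf{The missing idea is an iterable structural certificate of non-rigidity.} The paper uses \Cref{lem:Garamvolgyi}: if $G$ is not globally rigid, there is a partition $S_1,\dots,S_k$ with $k\ge 2$ such that at least half the edges lie inside parts and every vertex has at most one neighbour in each other part. This yields a density increment (\Cref{thm:DensityIncrem1}). Write $\alpha=|E|/|V|$.
\begin{enumerate}
\item If all parts have size at most $\eps|V|$, some $G[S_i]$ has density at least $\alpha/2$. The density halves, but the vertex set shrinks by a factor $\eps$.
\item Otherwise some part $A$ has $|A|\ge\eps|V|$. Since $e(A,V\setminus A)\le|V\setminus A|$, one of $A$ and $V\setminus A$ has density at least $\alpha-|V\setminus A|/|V|$.
\end{enumerate}

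\textbf{How the paper's iteration closes the argument.} One iterates on the chosen induced subgraph until it is globally rigid, and its order is then at least its density. The losses are controlled as follows.
\begin{enumerate}
\item Halving steps occur at most $\log n/\log(1/\eps)$ times.
\item Passing to $V\setminus A$ costs at most $1$ each time and happens $O(\log n/\eps)$ times.
\item The losses from passing to $A$ sum to $O(\log n)$ by a harmonic-sum bound.
\end{enumerate}
Taking $\eps$ of order $C_\delta\, n\log n/m\le n^{-\delta/2}$ leaves density at least $\frac{m}{n}2^{-O(1/\delta)}$. This is exactly the ``$O(1/\delta)$ rounds, polynomial gain and constant loss per round'' you hoped step (4) would deliver. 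It comes from the partition structure, not from triangles or edge lengths.
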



Benjamini and Tzalik \cite{benjamini2022determining} conjectured that for any point set $V \subset \R$ of size $n$, and any graph $G$ with vertex set $V$ and $m$ edges, if we reveal the pairwise distances of $V$ according to the edges of $G$, then there exists a subset of $V$ of size $m/n$ which is reconstructible.
They verified their conjecture for the case where $m \geq n^{3/2}$.
While \Cref{thm:weakBT} and \Cref{thm:weakBTdense} respectively show that Benjamnini and Tzalik's conjecture is true up to an additional $\log n$ correcting factor on the size of the reconstructed set, and for dense enough graphs, we demonstrate that this conjecture does not hold in general through the following counterexample, which also shows that \Cref{thm:weakBT} and \Cref{thm:weakBTdense} are essentially best possible.

\begin{theorem}
\label{thm:CounterexampleBT}
    There exists a set $V \subset \R$ with $|V| =2^N$, and a graph $G=(V,E)$ such that $|E|=N2^{N-1}$, and such that no reconstructible subset of $V$ has size greater than $2$.
\end{theorem}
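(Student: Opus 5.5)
The natural candidate is the hypercube: take $V=\{\sum_{i\in S} a_i : S\subseteq [N]\}$, where $a_1,\dots,a_N$ are positive reals that are linearly independent over $\mathbb{Q}$ (for instance rapidly growing and generic). Identify $V$ with $\{0,1\}^N$ and let $G=Q_N$, so two points are adjacent exactly when their subsets differ in one element. Then $|V|=2^N$ and $|E|=N2^{N-1}$. Every edge has length equal to some $a_i$. Our goal is to show that for any $U\subseteq V$ with $|U|\ge 3$ there is a second placement $g:U\to\mathbb{R}$ that agrees with the true placement $f$ on the lengths of all edges of $G[U]$, but is not an isometric copy of $f$. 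We must also check that $g$ is injective; this matters because the paper's definition only quantifies over injective maps.

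\textbf{The flip construction.} Fix a coordinate $i$ and write $f(x)=\sum_j a_j x_j$. Define
\[
g_i(x)=\sum_{j\neq i} a_j x_j - a_i x_i .
\]
This reflects the $i$-th direction only. For an edge in direction $j$, the difference $g_i(x)-g_i(y)$ equals $\pm a_j$, and its absolute value is $a_j$, the same as under $f$. So $g_i$ preserves the length of every edge of the whole cube, and hence of every induced subgraph.

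\textbf{Injectivity.} By the $\mathbb{Q}$-linear independence of the $a_j$, the map $g_i$ is injective on $\{0,1\}^N$.

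\textbf{Showing $g_i$ is not an isometry on $U$.} The remaining task is to choose $i$ so that $g_i|_U$ is not a translation or reflection of $f|_U$.
\begin{itemize}
\item Suppose $g_i=f+c$ on $U$. Then $-2a_i x_i$ is constant on $U$, so $x_i$ is constant on $U$.
\item Suppose $g_i=-f+c$ on $U$. Then $2\sum_{j\neq i} a_j x_j$ is constant on $U$, so by independence every coordinate other than $i$ is constant on $U$.
\end{itemize}
Since $|U|\ge 3$, the points of $U$ cannot differ in only one coordinate. Hence at least two coordinates vary on $U$; call them $i$ and $k$. Taking this $i$, the first alternative fails because $x_i$ varies, and the second fails because $x_k$ varies. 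Thus $g_i$ witnesses that $U$ is not reconstructible.

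\textbf{Main obstacle.} The step I expect to need the most care is fixing the notion of ``reconstructible subset.'' I read it as: the subset $U$, with the distances revealed inside $U$, determines $f|_U$ up to isometry, i.e.\ $G[U]$ is globally rigid in $\mathbb{R}$. If the intended notion instead allows the use of distances to vertices outside $U$, the same argument still goes through. Indeed, $g_i$ is defined on all of $V$, preserves every revealed distance, and is injective. So it is a global alternative realization of the whole configuration whose restriction to $U$ is not congruent to $f|_U$.

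With either reading, the result is $\le 2$, and sets of size $2$ joined by an edge are trivially reconstructible, which matches the statement.
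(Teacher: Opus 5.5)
Your proposal is correct and is essentially the paper's proof: the same hypercube embedding (the paper uses weights $3^i$ instead of $\mathbb{Q}$-independent $a_i$; both work because only $\{-1,0,1\}$-combinations need to be nonzero) and the same single-coordinate flip maps. The only difference is the last step. The paper argues pairwise: every non-edge distance changes under the flip at a coordinate where the two endpoints differ, so reconstructible sets are cliques, and triangle-freeness gives the bound. You argue at the level of a $3$-set via non-congruence, which tacitly uses the standard fact that a distance-preserving map on a subset of $\mathbb{R}$ has the form $x\mapsto \pm x+c$; your explicit injectivity check for $g_i$ is a point the paper leaves implicit.
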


Our construction is essentially a well-chosen embedding of a hypercube graph; a structure that has previously appeared in the study of graph rigidity, see for instance Theorem 1.2 by Raz and Solymosi \cite{raz2023dense}.



\paragraph{Related notions of rigidity.}
Prior to the work of Benjamini and Tzalik \cite{benjamini2022determining} on arbitrary point sets, substantial research had already been conducted in the case of \emph{generic} point sets. 
A set $V \subset \mathbb{R}^d$ is called \emph{generic} if the $d|V|$ coordinates of the vertices are algebraically independent over $\mathbb{Q}$. 
An embedding $f \colon V \rightarrow \mathbb{R}^d$ is said to be \emph{generic} if its image is generic.
A graph $G=(V,E)$ is \emph{generic globally rigid in $\mathbb{R}^d$} if there is some generic embedding of $V$ in $\mathbb{R}^d$ which is reconstructible from the lengths along the edges of $G$. 
It has been established \cite{connelly2005generic, gortler_characterizing_2012} that a graph is generic globally rigid in $\mathbb{R}^d$ if and only if \emph{all} its generic embeddings are reconstructible from their edge lengths.
It is worth emphasizing, however, that the restriction to generic embeddings in the definition of global rigidity is a significant weakening. 
For instance, it is a classical result (see Theorem 63.2.7 in \cite{jordan2017global}) that a graph is generic globally rigid in $\mathbb{R}$ if and only if it is $2$-connected.
This stands in contrast to the reconstruction of arbitrary point sets, since Gir\~ao, Illingworth, Michel, Powierski, and Scott showed \cite{girao2023reconstructing} that there are graphs with arbitrarily high connectivity which can be embedded in $\mathbb{R}$ in such a way that their vertex sets cannot be reconstructed from their edge lengths.
In fact, those graphs not only reconstruct any generic embedding in $\mathbb{R}$, but also in $\mathbb{R}^d$ for arbitrarily large $d$, since, in a recent breakthrough, Vill\'anyi~\cite{villanyi2025every} showed that graphs of connectivity $d(d+1)$ reconstruct any generic point set in $\mathbb{R}^d$.

To draw a parallel with our main result \Cref{thm:mainSparseRegime}, it is also instructive to note developments in the study of generic global rigidity of random graphs. 
Namely, Lew, Nevo, Peled and Raz \cite{lew23randomrigid} proved that, for $d$ fixed, the random graph \(G(n,p)\) is generic globally \(d\)-rigid with high probability exactly when it has minimum degree \(d+1\), which has a sharp threshold at $p = (\log{n}+ d\log{\log{n}})/n$.
\\

The notion of rigidity is naturally connected to that of global rigidity. 
A graph $G=(V,E)$ is called rigid in $\R^d$, or $d$-rigid, if for a generic embedding
$p : V \rightarrow \R^d$, every continuous motion of the vertices in $\R^d$ that starts at $p$, and preserves the lengths of all the edges of $G$, does not change the distance between
any two vertices.
The rigidity of random graphs has been the subject of several recent papers, see for instance \cite{krivelevich2023rigid,lew23randomrigid,peled2024rigidity}. \\


The paper is structured as follows.
In \Cref{sec:outline}, we present a gentle outline of the proofs.
In Sections \ref{sec:Geometric-Lemmas} to \ref{sec:MainSparseRegime}, we introduce the tools required for our proofs.
In \Cref{sec:MainProof}, we prove \Cref{thm:mainSparseRegime}.
In \Cref{sec:Deterministic}, we prove Theorems \ref{thm:weakBT}, \ref{thm:weakBTdense} and \ref{thm:CounterexampleBT}.
Finally, we conclude with some remarks in \Cref{sec:ConcludingRemarks}.

\section{Outline of the proofs}
\label{sec:outline}

We start by giving an outline of the proof of \Cref{thm:mainSparseRegime}.
Let $G$ be a graph sampled as $G(n,(1+\eps)/n)$.
Our first tool is \Cref{lem:PropertyWitness} which gives us some structural property on $G$ whenever some distance $uv$ is not reconstructible from the distances on the edges of $G$. This is an adapted version of a result of Garamv{\"o}lgyi (see \Cref{thm:GaramvolgyiCriteria}), and provides a structural "witness" to the failure of reconstruction. 
The intuition of this result is as follows: if the distance $uv$ is not reconstructible, then there exists two embeddings $f$ and $g$ of $V$ into $\R$ which agree on the edges of $G$ but differ on the distance $uv$.
For every edge $e=yz \in E(G)$, we say that $f$ and $g$ \emph{align} on $e$ if $f(y)-f(z)=g(y)-g(z)$, i.e. if the edge $e$ has same "direction" in both embeddings.
Potentially by replacing $g$ by its reflection $-g$, we may assume that $f$ and $g$ align on a set of edges $Z \subseteq E(G)$ of size $|Z| \geq |E(G)|/2$. 
Taking the $S_i$'s to be the connected components of $Z$, it is then easy to verify that the properties claimed hold: in particular that between two blocks $S_i$ and $S_j$, all edges of $G$ have same length relative to the embedding.
We call such a partition a witness, and, to summarise, \Cref{lem:PropertyWitness} states that the failure of reconstruction implies the existence of such a witness. \\

Our high-level strategy for the rest of the proof is then relatively simple: we aim to show that some subgraph $G'$ of $G$ of size at least $\Omega(\eps^3n)$ does not contain any witness, with high probability, which by the discussion of the previous paragraph, would show that any distances in $G'$ is reconstructible.
To show this, we expose first the \emph{skeleton} of $G$, i.e. the graph obtained from $G$ by removing the labels of the vertices, and select (see \Cref{sec:PrelSparseRegime}) a subgraph $G'$ of $G$ of size at least $\Omega(\eps^3n)$.
We fix a partition of the vertex set of $G'$, and we show that this partition forms a witness with small probability upon revealing the labels of the skeleton of $G'$.
A simple union bound over all such partitions would then give the desired result. \\

However, bounding the probability that a fixed partition is a witness is a complicated task itself.
For this, we exploit a key constraint imposed by \Cref{lem:PropertyWitness}: all edges between two distinct blocks $S_i$ and $S_j$ must have equal length in the embedding.
Therefore, we reveal the labels of the skeleton of $G'$ vertex by vertex, and once both endpoints of an edge connecting $S_i$ and $S_j$ are revealed, we have knowledge of the length of this edge, and all other edges connecting $S_i$ and $S_j$ must match it.
Since this event is highly unlikely when vertex labels are drawn uniformly, this gives a bound (formalized in \Cref{lem:Proba-witness}) on the probability that a given partition becomes a witness.
Note that this bound crucially depends on the number of edges crossing between blocks of the partition.
This motivates the choice of a subgraph $G'$ of $G$ having good expansion properties. 
However, previous results by Ding, Lubetzky, and Peres (\Cref{thm:DingAnatomy2coreSparseRegime}) and Benjamini, Kozma, and Wormald (\Cref{lem:CheegerSparseRegime}) imply together that the $2$-core of $G \sim G(n,(1+\eps)/n)$ is essentially an expander whose edges are subdivided according to some independent random variables.
Using those results, we will show that we can find some large subgraph $G'$ of $G$ with good expansion properties, for which we will be able to apply the above strategy. \\

Concerning the deterministic setting, we show \Cref{thm:CounterexampleBT} via a well-chosen embedding of the vertex set of the hypercube graph into $\mathbb{R}$.
Finally, we prove Theorems \ref{thm:weakBT} and \ref{thm:weakBTdense} using a characterization of globally rigid graphs in $\mathbb{R}$ by Garamv{\"o}lgyi, see \Cref{thm:GaramvolgyiCriteria}.

\section{Geometric lemmas}
\label{sec:Geometric-Lemmas}

We start this section by giving definitions of the terms used in the introduction and throughout the paper.
For convenience, we will often represent a point set $V$ of size $n$ as the image of an injective function $f \colon [n] \rightarrow \mathbb{R}$. 
Given an injective function $f \colon [n] \rightarrow \mathbb{R}$ in $\mathbb{R}$ and a graph $G$ on vertex set $[n]$, we define the {\em $G$-distance function of $f$}, denoted $d_{f,G}=d$, by $d(ij):= |f(i)-f(j)|$ for every $ij \in E(G)$.
We say that $f \colon [n] \rightarrow \mathbb{R}$ {\em realises} a function $d \colon E(G) \rightarrow \mathbb{R}$ if $d= d_{f, G}$. \\
We formally define distance reconstructibility as follows: Given an injective function $f \colon [n] \rightarrow \mathbb{R}$ in $\mathbb{R}$, a graph $G$ on vertex set $[n]$ and two vertices $u,v$ of $G$, for the sake of conciseness we say that \emph{the distance $uv$ is not reconstructible from $G$} if there exists an injective function $g \colon [n] \rightarrow \mathbb{R}$ such that $g$ realises $d_{f,G}$ but $|f(u)-f(v)| \neq |g(u)-g(v)|$. 
Otherwise, we say that \emph{the distance $uv$ is reconstructible from $G$}.
A subset $V' \subset V(G)$ is said to be \emph{reconstructible from $G$} if the distance $uv$ is reconstructible for any $u,v \in V'$. \\

We start with the following definition that is one of the main ingredient of our proofs.
A partition $S_1 \cup \dots \cup S_k$ of the vertex set $V(G)$ of a graph $G$ is said to be a \emph{connected partition} if the subgraph $G[S_i]$ is connected for each $i \in [k]$.

\begin{definition}
\label{def:witness}
    We fix an injective function $f \colon [n] \rightarrow \mathbb{R}$, and a graph $G$ on vertex set $[n]$.
We say that a connected partition $S_1 \cup \dots \cup S_k$ of $[n]$ is a \emph{witness} with respect to $G$ and $f$ if there exists some reals $f_{ij}$ for $i,j \in [k]$ with $f_{\ell \ell}=0$ for every $\ell \in [k]$ such that each of the following holds
\begin{itemize}
    \item the set of edges $W$ with endpoints in two different $S_i$ satisfies $|W| \leq E(G)/2$,
    \item for every $a_i \in S_i$ and $a_j \in S_j$ such that $i \neq j$ and $a_ia_j \in E(G)$, we have 
        \begin{align*}
            f(a_i)-f(a_j)=f_{ij},
        \end{align*}
    \item for any cycle $C=x_1 \dots x_r$ with $x_r=x_1$ in $G$, such that $x_i \in S_{y_i}$ for every $i \in [r]$, we have 
        \begin{align*}
            f_{y_1y_2}+\dots+f_{y_{r-1}y_r}=0.
        \end{align*}
\end{itemize}
\end{definition}

Before presenting our first result, we briefly discuss a related theorem due to Garamvölgyi, which provides useful context.
A 2-colouring of the edges of a graph $G$ is called a \emph{NAC-colouring} if both colours are used and no cycle has exactly one edge of a given colour. We shall always refer to the two colours as red and blue.
Using this notion, Garamv{\"o}lgyi showed in Theorem 2.4 of \cite{garamvolgyi2022global} the following characterisation of globally rigid graphs in $\mathbb{R}$.
\begin{theorem}
\label{thm:GaramvolgyiCriteria}
A graph $G$ is globally rigid in $\mathbb{R}$ if and only if $G$ has no NAC-colouring for which $|R_i \cap B_j| \leq 1$ for every $1 \leq i \leq k$ and $1 \leq j \leq l$ where $R_1, \dots, R_k$ are $B_1, \dots, B_l$ are the vertex sets of the connected components of the subgraph of red and blue edges, respectively. 
\end{theorem}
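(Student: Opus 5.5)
The plan is to prove both implications directly, by translating between pairs of non-congruent realisations $f,g$ and edge $2$-colourings. Throughout I assume $G$ is connected with at least one edge; this is the convention under which the statement is meant. If $G$ is disconnected and has two components, one of which contains an edge, colouring that component red and the rest blue gives a valid colouring and $G$ is visibly not globally rigid. Edgeless degenerate cases I would set aside by convention.

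\emph{Not globally rigid $\Rightarrow$ a NAC-colouring with $|R_i\cap B_j|\le 1$.} Take injective $f,g$ realising the same edge lengths but with $|f(u)-f(v)|\neq|g(u)-g(v)|$ for some pair $u,v$. Colour an edge $yz$ red if $f(y)-f(z)=g(y)-g(z)$ and blue if $f(y)-f(z)=-(g(y)-g(z))$. Since $f$ is injective, edge lengths are nonzero, so exactly one of these holds.

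\begin{itemize}
\item Both colours are used. If every edge were red, then $g-f$ would be constant on the connected graph $G$, so $f$ and $g$ would be congruent. If every edge were blue, $g+f$ would be constant, again giving congruence.
\item The colouring is NAC. Traverse a cycle. The signed $f$-differences sum to $0$ and the signed $g$-differences sum to $0$. Adding and subtracting these two relations shows that the $f$-differences over the red edges of the cycle sum to $0$, and likewise over the blue edges. If some colour appeared exactly once on the cycle, that edge would have zero $f$-difference, contradicting injectivity.
\item The condition $|R_i\cap B_j|\le1$ holds. On a red component, $g-f$ is constant; on a blue component, $g+f$ is constant. Two vertices $x,y\in R_i\cap B_j$ would therefore satisfy both $g(x)-f(x)=g(y)-f(y)$ and $g(x)+f(x)=g(y)+f(y)$. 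Together these force $f(x)=f(y)$, which contradicts injectivity.
\end{itemize}

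\emph{A colouring as in the statement $\Rightarrow$ not globally rigid.} Let $i(v)$ and $j(v)$ be the indices of the red and blue components containing $v$. Choose reals $\alpha_1,\dots,\alpha_k,\beta_1,\dots,\beta_l$ generically and set
\begin{align*}
f(v)=\alpha_{i(v)}+\beta_{j(v)},\qquad g(v)=\alpha_{i(v)}-\beta_{j(v)}.
\end{align*}
\begin{itemize}
\item Edge lengths agree. On a red edge the $\alpha$-terms cancel, so the two differences are $\pm$ the same $\beta$-difference. On a blue edge the $\beta$-terms cancel, so both differences equal the same $\alpha$-difference.
\item Both maps are injective. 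The hypothesis $|R_i\cap B_j|\le1$ says exactly that $v\mapsto(i(v),j(v))$ is injective, so a generic choice of the $\alpha$'s and $\beta$'s makes $f$ and $g$ injective.
\item Some distance differs. It suffices to find $u,w$ lying in different red components and different blue components. Then $|f(u)-f(w)|=|g(u)-g(w)|$ would force the $\alpha$-difference or the $\beta$-difference between $u$ and $w$ to vanish, which generic choice forbids. Since $G$ is connected and both colours are used, some vertex $v$ has a red edge $vu$ and a blue edge $vw$. If $u,w$ were in the same red component, then $v$ and $w$ would share both their red and their blue component, violating $|R_i\cap B_j|\le1$. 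Symmetrically, if $u,w$ were in the same blue component, then $u$ and $v$ would share both components.
\end{itemize}
The NAC property is not actually needed in this direction. It only expresses the consistency that the construction produces automatically.

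I expect the main obstacle to be bookkeeping rather than mathematics. Specifically, I need to pin down the convention that components include isolated vertices of the coloured subgraphs, and to handle the degenerate disconnected or edgeless cases. The genericity arguments are routine finite avoidance of hyperplanes.
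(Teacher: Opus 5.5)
Your argument is correct for connected $G$. The paper itself gives no proof of this statement; it imports it from Garamv\"olgyi, so there is no proof to match line by line. The closest argument in the paper is the proof of \Cref{lem:PropertyWitness}, which uses exactly your colouring: edges on which $f$ and $g$ align versus edges on which they flip. There the author takes the components of the aligned edges and replaces $g$ by $-g$ so that at most half the edges are flipped. He then reads off that all cross edges between two blocks have the same signed length. You instead extract the NAC property and $|R_i\cap B_j|\le 1$ from the invariance of $g-f$ on red components and of $g+f$ on blue components, which is cleaner for this purpose. Your converse construction $f(v)=\alpha_{i(v)}+\beta_{j(v)}$, $g(v)=\alpha_{i(v)}-\beta_{j(v)}$ has no counterpart in the paper. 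Your remark that the cycle condition is redundant is also right: a cycle with a single red edge $xy$ would, through its blue remainder, put $x,y$ in a common blue component as well as a common red one.

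One correction to your preamble: connectivity is a genuine hypothesis, not a convention. Take $K_3$ plus an isolated vertex. It is not globally rigid, yet it has no NAC-colouring at all, because any colouring of the triangle that uses both colours uses one of them exactly once on that cycle. So the statement as written fails for this graph. Your recipe of colouring one component red and ``the rest'' blue only works when at least two components contain edges.
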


We now introduce our first lemma, which resembles \Cref{thm:GaramvolgyiCriteria}.

\begin{lemma}
\label{lem:PropertyWitness}
    Let $f \colon [n] \rightarrow \mathbb{R}$ be an injective function, and $G$ be a connected graph on vertex set $[n]$.
    Let $u$ and $v$ be two vertices of $G$.
    Suppose that the distance $uv$ is not reconstructible from $G$.
    Then there exists a connected partition $S_1 \cup \dots \cup S_k$ of $[n]$ which is a witness with respect to $G$ and $f$, and such that $u \in S_1$ and $v \in S_2$.
\end{lemma}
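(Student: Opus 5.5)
Take the second injective embedding $g$ given by non-reconstructibility of $uv$: $g$ realises $d_{f,G}$ but $|f(u)-f(v)|\neq|g(u)-g(v)|$. For every edge $yz\in E(G)$ we have $|f(y)-f(z)|=|g(y)-g(z)|\neq 0$ (by injectivity), so either $f(y)-f(z)=g(y)-g(z)$ (the edge is \emph{aligned}) or $f(y)-f(z)=-(g(y)-g(z))$ (\emph{anti-aligned}). Replacing $g$ by $-g$ swaps the two classes and changes neither the hypotheses nor $|g(u)-g(v)|$. So we may assume the aligned set $Z$ has $|Z|\ge |E(G)|/2$. Let $S_1,\dots,S_k$ be the vertex sets of the connected components of the spanning subgraph $([n],Z)$. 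This is a connected partition, and every edge with endpoints in different parts lies in $E(G)\setminus Z$, so $|W|\le |E(G)|/2$.

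Next I would check the second and third conditions of \Cref{def:witness}. On each part $S_i$, the map $h=f-g$ is constant, since it is constant along aligned edges and $S_i$ is connected via $Z$; call the constant $c_i$. Now take an edge $a_ia_j$ with $a_i\in S_i$, $a_j\in S_j$ and $i\neq j$. It is anti-aligned, so $f(a_i)-f(a_j)=-(g(a_i)-g(a_j))$. Also $f(a_i)-f(a_j)-(g(a_i)-g(a_j))=c_i-c_j$. Adding these gives $2(f(a_i)-f(a_j))=c_i-c_j$. Hence $f(a_i)-f(a_j)=(c_i-c_j)/2$, which depends only on $(i,j)$, so we set $f_{ij}:=(c_i-c_j)/2$ (and $f_{\ell\ell}=0$).

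For the cycle condition, the telescoping sum $\sum_t f_{y_ty_{t+1}}=\tfrac12\sum_t (c_{y_t}-c_{y_{t+1}})$ vanishes around a closed walk. Steps inside a single part contribute $f_{\ell\ell}=0$, which is consistent with $c_\ell-c_\ell=0$.

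It remains to show $u$ and $v$ lie in different parts; relabelling then gives $u\in S_1$, $v\in S_2$. Suppose instead $u,v\in S_i$. Then $f(u)-g(u)=c_i=f(v)-g(v)$, so $f(u)-f(v)=g(u)-g(v)$, contradicting $|f(u)-f(v)|\ne|g(u)-g(v)|$. The argument does not rely on the reflection choice, since we only use the $g$ chosen after the reflection, and that $g$ still witnesses non-reconstructibility.

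There is no serious obstacle here. The only points needing care are the observation that $g\mapsto -g$ preserves everything, so the edge-count condition can be met, and using the defining identity of anti-aligned edges to get that cross lengths depend only on the pair of blocks. Connectivity of $G$ is not needed beyond what is used above. A degenerate case is $k=1$, but it is excluded because $u$ and $v$ are separated.
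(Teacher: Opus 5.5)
Your proof is correct and follows the paper's argument. It uses the same reflection $g\mapsto -g$ to make the anti-aligned set small, the same partition into components of the aligned edges, and the same path-summing contradiction to separate $u$ and $v$. The only difference is bookkeeping: you note that $f-g$ is constant on each block, which gives the explicit value $f_{ij}=(c_i-c_j)/2$ and turns the cycle condition into an immediate telescoping sum. The paper instead compares two cross edges directly and only asserts that the third condition follows ``the same way''.
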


\begin{proof}
    If the distance $uv$ is not reconstructible, then by definition there exists an embedding $g$ such that $g$ agrees with $f$ on the edges of $G$, but $|f(u)-f(v)| \neq |g(u)-g(v)|$.
    Let $W \subset E(G)$ be defined by the set of edges $e=xy$ such that $f(x)-f(y)=g(y)-g(x)$. 
    Clearly $W$ is neither empty nor equal to $E(G)$, as otherwise $f$ and $g$ would be isometric. 
    Additionally, by potentially replacing $g$ with $-g$, we ensure that $|W| \leq E(G)/2$.
    Let $S_1 \cup \dots \cup S_k$ be the partition of connected components of $G \setminus W$.
    Note that $u$ and $v$ belong to different connected components, as otherwise there would exist vertices $w_1, \dots, w_k$ with $w_1=u$ and $w_k=v$ such that $w_iw_{i+1} \in E(G) \setminus W$ for every $i \in [k-1]$, and therefore $f(w_i)-f(w_{i+1})=g(w_i)-g(w_{i+1})$ for every $i \in [k-1]$. Summing those would give $f(u)-f(v) = g(u)-g(v)$, contradicting $|f(u)-f(v)| \neq |g(u)-g(v)|$.
    Without loss of generality, we can relabel the components so that $u \in S_1$ and $v \in S_2$. \\
    We now show that $S_1 \cup \dots \cup S_k$ is a witness with respect to $G$ and $f$.
    To prove this, we show that there exists some reals $f_{ij}$ for $i,j \in [k]$ with $f_{\ell \ell}=0$ for every $\ell \in [k]$ satisfying the three conditions in \Cref{def:witness}. 
    Note that we have already shown that $|W| \leq E(G)/2$. \\
    We now move on to the second point. 
    Let $a_i, b_i \in S_i$ and $a_j, b_j \in S_j$ such that $i \neq j$, $a_ia_j \in E(G)$ and $b_ib_j \in E(G)$.
    Note that $a_ia_j \in W$ and $b_ib_j \in W$, as $i \neq j$.
    By summing along a path within $S_i$ from $a_i$ to $b_i$, we obtain $f(a_i)-f(b_i)=g(a_i)-g(b_i)$.
    Similarly, we have $f(a_j)-f(b_j)=g(a_j)-g(b_j)$.
    Therefore, we have
    \begin{align}
    \label{eq:Eqfai-faj1}
        f(a_i)-f(a_j) &= f(a_i)-f(b_i)+f(b_i)-f(b_j)+f(b_j)-f(a_j),
    \end{align}
    and
    \begin{align}
    \label{eq:Eqfai-faj2}
        f(a_i)-f(a_j) = g(a_j)-g(a_i) &= g(a_j)-g(b_j)+g(b_j)-g(b_i)+g(b_i)-g(a_i) \nonumber \\
        &= f(a_j)-f(b_j)+f(b_i)-f(b_j)+f(b_i)-f(a_i).
    \end{align}
    Adding \eqref{eq:Eqfai-faj1} and \eqref{eq:Eqfai-faj2} and simplifying gives $f(a_i)-f(a_j)=f(b_i)-f(b_j)$. 
    Therefore, setting $f_{\ell k} =  f(a_{\ell})-f(a_{k})$ for any $a_{\ell}a_k \in E(G)$ with  $a_{\ell} \in S_{\ell}$ and $a_{k} \in S_{k}$ gives the desired conclusion.  \\
    The third item in \Cref{def:witness} can then be proved the same way.
\end{proof}


For a permutation $\sigma$ of $[n]$ and an injective function $f \colon [n] \rightarrow$, we let $f_{\sigma}$ be the composition $f \circ \sigma$. 

We fix an injective function $f \colon [n] \rightarrow \mathbb{R}$, a connected graph $G$ on vertex set $[\ell]$ and a connected partition $S_1 \cup \dots \cup S_k$  of $[\ell]$, with $\ell \leq n/2$. 
Let $E'$ be the set of edges in $G$ with endpoints in two different $S_i$, $V'$ be the number of vertices which are incident to an edge in $E'$, and $C_2$ is the number of non-trivial connected components (those with more than one vertex) in the subgraph spanned by $E'$. 

The following result bounds the probability of existence of a witness for a given graph and a composition of the initial embedding with a permutation taken uniformly at random.


\begin{lemma}
\label{lem:Proba-witness}
    For a permutation $\sigma$ of $[n]$ taken uniformly at random, the probability $q$ of $S_1 \cup \dots \cup S_k$ being a witness with respect to $G$ and $f_{\sigma}$ satisfies
    \begin{align*}
        q \leq \left (\frac{n}{2} \right)^{-V'+C_2+k-1}.
    \end{align*}
\end{lemma}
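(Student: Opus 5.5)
We bound $q$ by revealing the values $f_\sigma(x)=f(\sigma(x))$ for $x\in[\ell]$ one vertex at a time, in a carefully chosen order, and showing that many of the revealed values are essentially forced. Since $\ell\le n/2$, when we reveal the value at a new vertex, conditional on all previously revealed values it is uniform over at least $n-\ell\ge n/2$ remaining points of $f([n])$. Consequently, any event of the form ``the newly revealed value equals a specific real determined by previous values'' has conditional probability at most $(n/2)^{-1}$. The goal is to exhibit $V'-C_2-(k-1)$ such forced revelations, which gives the stated bound.

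First we set up the constraint structure. Consider the auxiliary multigraph $H$ on blocks $[k]$, with an edge $ij$ for each edge of $E'$ between $S_i$ and $S_j$. Since $G$ is connected and the partition is connected, $H$ is connected. By the second witness condition, every edge $a_ia_j\in E'$ has $f_\sigma(a_i)-f_\sigma(a_j)=f_{ij}$. By the cycle condition, the $f_{ij}$ form a potential on $H$: there are reals $\phi_1,\dots,\phi_k$ with $f_{ij}=\phi_i-\phi_j$. Thus the witness event implies the existence of $\phi$ such that $f_\sigma(a)-\phi_{i(a)}$ is constant along each edge of $E'$. Hence, on each nontrivial component $D$ of the subgraph spanned by $E'$, the quantity $f_\sigma(a)-\phi_{i(a)}$ equals a single constant $c_D$ for all $a\in D$. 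Since $\phi$ has $k$ values and one of them can be normalised (only differences matter), the free parameters number at most $(k-1)+C_2$, while the vertices of $V'$ number $V'$.

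Next we turn the parameter count into forced revelations. We order the vertices of $V'$ so that each new vertex either introduces a new free parameter or has its value determined. Concretely, process the components $D$ one by one, and within each component traverse a spanning tree. The first vertex of a component fixes $c_D$, unless its block potential is already known. Each later vertex $a$ in block $i$ is forced to equal $c_D+\phi_i$ if $\phi_i$ has already been fixed. Otherwise it fixes $\phi_i$. Since there are at most $k-1$ independent $\phi$-values (after normalisation) and $C_2$ constants, at most $k-1+C_2$ revelations are ``free''. Each of the remaining at least $V'-C_2-k+1$ revelations must hit a specific value and so succeeds with conditional probability at most $2/n$. Multiplying these conditional probabilities yields $q\le (n/2)^{-V'+C_2+k-1}$.

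The main obstacle is this bookkeeping step, because $\phi$ is not revealed in advance. We need a well-defined adaptive rule deciding whether a revelation is forced or free, chosen so that the number of free steps is deterministically at most $C_2+k-1$. Without that, we would be union bounding over $\phi$, which ranges over a continuum of values. To handle this, I would use an online rule: maintain the set of currently determined parameters as a union–find structure over the variables $\{\phi_i\}\cup\{c_D\}$. A revelation is free exactly when it merges two previously separate classes, and there are $k+C_2$ variables with one normalisation, so this happens at most $k-1+C_2$ times. Every other revelation has its value determined by the past revelations, which gives the bound.
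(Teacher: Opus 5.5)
Your proposal is correct and is essentially the paper's argument: reveal the images one vertex at a time, bound the number of ``free'' revelations by $C_2+k-1$ via a union--find merge count, and pay at most $2/n$ for each forced revelation. The only difference is bookkeeping. The paper walks a spanning forest of $E'$ by DFS, with the $C_2$ roots free, and runs union--find on the blocks alone. You instead pass to potentials and run union--find jointly on $\{\phi_i\}\cup\{c_D\}$, which makes explicit the use of the cycle condition of the witness definition (the paper cites only the second item).
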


\begin{proof}
    Let $K_1, \dots, K_{C_2}$ be the connected components of the subgraph spanned by $E'$, and for every $i \in [C_2]$, we let $T_i$ be an arbitrary spanning tree of $C_i$. 
    Let $F= \cup_{i=1}^{C_2} T_i$, and note that $F$ is a spanning forest of the subgraph spanned by $E'$, and has exactly $V'-C_2$ edges.
    Let $R$ be the empty graph on vertex set $[k]$, and let $Z$ be an initially empty set.
    We will sequentially reveal the images of the vertices in $F$ through $\sigma$, one by one, through a Depth-First Search (DFS), starting in each component of $F$ at an arbitrary vertex.
    Suppose we are currently exploring the edge $e=uv$ of $F$, and that we have already revealed the image of $u$.
    If $u$ is in $S_i$ and $v$ is in $S_j$, we proceed as follows: if $i$ and $j$ are in different components in $R$, then we add the edge $ij$ in $R$, otherwise, we add $e$ to $Z$.
    For each $e=uv \in Z$, before revealing the image of $v$, we know from the second item of \Cref{def:witness} that $f(\sigma(u))-f(\sigma(v))$ must be of a specific value to avoid ruling out $S_1 \cup \dots \cup S_k$ as a witness.
    Given that the images of at most $\ell \leq n/2$ elements have already been revealed and that the embedding $f$ is injective, the probability of this condition being satisfied is at most $2/n$, independently for each edge.
    Since $|Z| \geq V'-C_2-(k-1)$, the result follows. 
\end{proof} 

\section{Anatomy of the giant component in the supercritical regime and consequences}

Ding, Lubetzky, and Peres established \cite{ding2014anatomy} a useful description of the giant component of $G(n,p)$ for $p=\frac{\lambda}{n}$ where $\lambda > 1$ is a fixed constant.
We shall use the following consequence of their work concerning the $2$-core of the giant component of $G(n,p)$ for this range of $p$.
We denote by $\Po(\Lambda)$ the Poisson distribution with parameter $\Lambda$.

\begin{definition}
\label{def:Ding-Lubetzky-Peres-model}
    Let $\lambda >1$ be fixed.
    Let $\mu < 1$ be the conjugate of $\lambda$, that is $\mu e^{-\mu}=\lambda e^{-\lambda}$.
    We define the model $\tilde{\mathcal{C}}^{(2)}_{\lambda}$ the following way:
    \begin{enumerate}
    \item Let $\Lambda$ be Gaussian $N(\lambda-\mu, \frac{1}{n})$ and let $D_u \sim \Po(\Lambda)$ for $u \in [n]$ be iid, conditioned on $\sum D_u \ind_{D_u \geq 3}$ being even. Let $N_k= \sum \ind_{D_u=k}$ and $N=\sum_{k \geq 3} N_k$. Select a random multigraph $\mathcal{K}$ on $N$ vertices, uniformly among all multigraphs with $N_k$ vertices of degree $k$ for $k \geq 3$.
    \item Replace the edges of $\mathcal{K}$ by paths of iid $\Geom(1- \mu)$ lengths.
\end{enumerate}
\end{definition}

\begin{theorem}
\label{thm:DingAnatomy2coreSparseRegime}
Let $\lambda >1$ be fixed.
Let $\mathcal{C}^{(2)}=\mathcal{C}^{(2)}_{\lambda}$ be the $2$-core of the giant component of $G(n,p)$ for $p=\frac{\lambda}{n}$.
Then $\mathcal{C}^{(2)}$ is contiguous to the model $\tilde{\mathcal{C}}^{(2)}=\tilde{\mathcal{C}}^{(2)}_{\lambda}$ as defined in \Cref{def:Ding-Lubetzky-Peres-model}.
That is $\mathbb{P}(\tilde{\mathcal{C}}^{(2)} \in \mathcal{A}) \rightarrow 0$ implies $\mathbb{P}(\mathcal{C}^{(2)} \in \mathcal{A}) \rightarrow 0$ for any event $\mathcal{A}$. 
\end{theorem}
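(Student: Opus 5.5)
This theorem is a restatement of the main result of Ding, Lubetzky and Peres \cite{ding2014anatomy}, specialised to the $2$-core. The plan is therefore not to reprove their anatomy theorem from scratch. Instead, I would derive the stated contiguity from their full description of the giant component, checking only that the $2$-core part of their model is exactly $\tilde{\mathcal{C}}^{(2)}_{\lambda}$.

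The first step is to recall the Ding--Lubetzky--Peres model $\tilde{\mathcal{C}}_{\lambda}$ for the whole giant component, which is built in three stages:
\begin{enumerate}
\item the kernel $\mathcal{K}$ is generated exactly as in the first item of \Cref{def:Ding-Lubetzky-Peres-model};
\item each kernel edge is replaced by a path of i.i.d.\ $\Geom(1-\mu)$ length, giving a $2$-core;
\item an independent Poisson$(\mu)$-Galton--Watson tree is attached to every vertex of that $2$-core.
\end{enumerate}
Their theorem states that $\mathcal{C}_1$, the giant component of $G(n,\lambda/n)$, is contiguous to $\tilde{\mathcal{C}}_{\lambda}$. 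In other words, $\mathbb{P}(\tilde{\mathcal{C}}_{\lambda}\in\mathcal{B})\to 0$ implies $\mathbb{P}(\mathcal{C}_1\in\mathcal{B})\to 0$ for every event $\mathcal{B}$.

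The second step is to pass from the whole component to its $2$-core. Let $\Phi$ be the map sending a graph to its $2$-core. This map is deterministic and is the same for both graphs: the $2$-core of $\tilde{\mathcal{C}}_\lambda$ is the graph produced by the first two stages, because attaching trees never changes the $2$-core. Every vertex of the stage-two graph has degree at least $2$, so it is its own $2$-core. Given an event $\mathcal{A}$ on $2$-cores, set $\mathcal{B}=\Phi^{-1}(\mathcal{A})$. Then
\[
\mathbb{P}(\tilde{\mathcal{C}}_\lambda\in\mathcal{B})=\mathbb{P}(\tilde{\mathcal{C}}^{(2)}\in\mathcal{A}),
\]
and the analogous identity holds for $\mathcal{C}_1$ and $\mathcal{C}^{(2)}$. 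Contiguity of the whole components therefore transfers immediately to the $2$-cores.

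The main point requiring care is the bookkeeping of conventions. One must confirm that the Gaussian parameter $\Lambda\sim N(\lambda-\mu,1/n)$, the parity conditioning and the geometric parameter $1-\mu$ match the normalisation in \cite{ding2014anatomy}. If their theorem is phrased for the $2$-core directly, this step reduces to citing it. One must also confirm that the vertices $u\in[n]$ carrying the $D_u$ are the right index set. A further subtlety is that $G(n,p)$ may have $2$-core pieces in small components outside the giant. This is harmless here because the statement concerns only the $2$-core of the giant component, so no additional argument is needed.
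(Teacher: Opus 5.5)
Your proposal is correct and matches the paper. The paper gives no separate proof: it invokes the Ding--Lubetzky--Peres anatomy theorem and states the $2$-core version as a consequence. Your pushforward argument under the $2$-core map is exactly the routine step implicit in that citation. It is valid because the attached Poisson$(\mu)$-Galton--Watson trees are subcritical ($\mu<1$), hence almost surely finite, so the $2$-core of the full model is exactly the subdivided kernel.
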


In a graph $G$, a \emph{bare path} is a path $u_0u_1\dots u_k$ for some $k \in \mathbb{N}$ such that the internal vertices $u_1, \dots, u_{k-1}$ all have degree $2$ in $G$.
The \emph{kernel} $\mathcal{K}(G)$ of a graph $G$ is the graph $G'$ obtained from $G$ by replacing every maximal bare path with a single edge between their endpoints.

\begin{lemma}
\label{lem:BoundModelAnatomySparseRegime}
    Let $\eps \in (0,1)$ be fixed, and let $\lambda=1+\eps$. 
    Let $\mathcal{K}$ be the kernel of the model $\tilde{\mathcal{C}}^{(2)}_1$ defined in \Cref{def:Ding-Lubetzky-Peres-model}.
    Then for small enough $\eps$ we have $|V(\mathcal{K})| \in [\eps^3 n/1000,16\eps^3 n]$ and $|E(\mathcal{K})| \in [\eps^3 n/1000, 32 \eps^3 n]$, with high probability.
    Moreover, $\mathcal{K}$ has maximum degree at most $10 \log n$ with high probability.
\end{lemma}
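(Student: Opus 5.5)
The plan is to estimate the parameters of the model $\tilde{\mathcal{C}}^{(2)}_{1+\eps}$ directly, using Chernoff/Poisson tail bounds for concentration. (I read the subscript ``$\tilde{\mathcal{C}}^{(2)}_1$'' as $\lambda=1+\eps$.) The first step is the conjugate $\mu$. Since $\mu e^{-\mu}=\lambda e^{-\lambda}$ with $\lambda=1+\eps$, expanding $x e^{-x}$ around $1$ gives $\mu = 1-\eps+\tfrac{2}{3}\eps^2+O(\eps^3)$. Hence $\lambda-\mu = 2\eps - O(\eps^2)$, and for small $\eps$ it lies in $[1.9\eps, 2\eps]$. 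The Gaussian $\Lambda\sim N(\lambda-\mu,1/n)$ satisfies $|\Lambda-(\lambda-\mu)|\le n^{-1/3}$ with high probability, so we may condition on $\Lambda\in[1.8\eps,2.1\eps]$.

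The second step counts the vertices of degree at least $3$ in $\mathcal{K}$. For fixed $\Lambda$,
\[
\mathbb{P}(\Po(\Lambda)\ge 3)=\frac{\Lambda^3}{6}(1+O(\Lambda)),
\]
which is between $0.9\eps^3$ and $1.6\eps^3$ for small $\eps$. The variables $D_u$ are iid, so $N$ is binomial with mean $\Theta(\eps^3 n)$, and Chernoff gives concentration within a factor $1\pm o(1)$. Conditioning on $\sum D_u\ind_{D_u\ge3}$ being even changes probabilities by at most a factor of about $2$, since this parity event has probability bounded away from $0$ (it tends to $1/2$). So with high probability $N\in[\eps^3n/1000,16\eps^3n]$, and $N=|V(\mathcal{K})|$ because the kernel of the model is exactly the multigraph $\mathcal{K}$: all its vertices have degree at least $3$, and the subdivided paths are the maximal bare paths.

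The third step handles the edges. We have $|E(\mathcal{K})|=\tfrac12\sum_u D_u\ind_{D_u\ge3}$, which is at least $\tfrac32 N$; this gives the lower bound immediately. For the upper bound, compute
\[
\mathbb{E}\bigl[D\ind_{D\ge3}\bigr]=\Lambda\,\mathbb{P}(\Po(\Lambda)\ge2)\approx \frac{\Lambda^3}{2}\le 5\eps^3 .
\]
The summands have all moments bounded, so a Chebyshev bound (the variance is $O(\eps^3 n)$) gives concentration, and $|E(\mathcal{K})|\le 32\eps^3 n$ with high probability.

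The last step is the maximum degree, which is the main but easy obstacle; it is just a union bound. We have $\mathbb{P}(\Po(\Lambda)\ge k)\le \Lambda^k/k!$, and with $k=10\log n$ and $\Lambda\le 1$ this is at most $(e/k)^k=n^{-\omega(1)}$. Multiplying by $n$ vertices and by the factor $2$ lost to the parity conditioning still gives $o(1)$. The only delicate bookkeeping is tracking how the parity conditioning and the random $\Lambda$ interact with the concentration. Both are handled by first conditioning on a typical value of $\Lambda$ and then noting that the parity event has probability at least $1/3$, so any event of probability $o(1)$ in the unconditioned iid model still has probability $o(1)$ after conditioning.
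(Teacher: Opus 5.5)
Your proposal is correct and follows essentially the same route as the paper: locate $\Lambda$ near $\lambda-\mu\approx 2\eps$, use binomial concentration for $N$ and for $\sum_u D_u\ind_{D_u\geq 3}$ with the parity conditioning costing only a constant factor, and finish with a Poisson tail bound plus a union bound for the maximum degree. Your small variations are all valid: the lower bound $|E(\mathcal{K})|\geq \tfrac32 N$, Chebyshev in place of Chernoff for the unbounded summands, and the elementary bound $\mathbb{P}(\Po(\Lambda)\geq k)\leq \Lambda^k/k!$ in place of the cited Poisson Chernoff estimate.
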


\begin{proof}
    As $1-\eps/2 \leq \mu \leq 1$, it follows that $\Lambda \in [\eps/3;2\eps]$, with high probability, by standard estimates on Gaussian random variables.
    Conditionally on the event $\Lambda \in [\eps/3;2\eps]$, we have for a given vertex $u \in [n]$ and small enough $\eps$:
    \begin{align*}
        \mathbb{P}(D_u \geq 3) = 1-e^{-\Lambda}-e^{-\Lambda}\Lambda -e^{-\Lambda}\Lambda^2/2 \in [ \Lambda^3/10;\Lambda^3].
    \end{align*}
    Therefore an application of Chernoff bound shows that $\sum_u \ind_{D_u \geq 3} \in [\Lambda^3n/20;2\Lambda^3n]$ with high probability.
    Moreover, it is easy to show that $\mathbb{P}(\sum D_u \ind_{D_u \geq 3} \text{ is even}) \geq 1/10$ for $n$ sufficiently large.
    Therefore $N \in [\eps^3 n/1000,16\eps^3 n]$ with high probability, as wanted.
    We treat the case of the edges in a similar way. We have for a given vertex $u \in [n]$ and small enough $\eps$:
    \begin{align*}
        \mathbb{E}[D_u \ind_{D_u \geq 3}] = \sum_{k \geq 3} e^{-\Lambda}\frac{\Lambda^k}{k!}k = \Lambda(1-e^{-\Lambda}-e^{-\Lambda}\Lambda) \in [\Lambda^3/4;2\Lambda^3].
    \end{align*}
    Another application of Chernoff bound shows that whp the number of edges in the kernel is in the interval $[\Lambda^3 n/8, 4 \Lambda^3 n]$, as wanted. \\
    Concerning the bound on the maximum degree of $\mathcal{K}$, note that by Corollay 2.2 of \cite{janson2011random}, which can be applied for a Poisson distribution by Remark 2.6 of \cite{janson2011random}, we have
    \begin{align*}
        \mathbb{P}(D_u \geq 10 \log n) &\leq \mathbb{P}( D_u \geq 5\log n+ \Lambda) \leq \exp\left( -\frac{25\log^2 n}{2(\Lambda+5\log n)}\right) = o(n^{-1}).
    \end{align*}
    A union bound gives the desired conclusion.
\end{proof}

For a graph $G$ and a subset $S \subseteq V(G)$, we write 
\begin{align*}
    \Phi(S)=\frac{e(S,S^c)}{d(S)},
\end{align*}
where $d(S)=\sum_{v \in S} d(v)$. 
We say that a graph $G$ is an \emph{$\alpha$-expander} if for every $S \subseteq V(G)$ with $|S| \leq |V(G)|/2$, we have $\Phi(S) \geq \alpha$.
Bollob{\'a}s showed \cite{bollobas1988isoperimetric} that with high probability random $r$-regular graphs are $c$-expander for some absolute constant $c>0$.
Benjamini, Kozma, and Wormald later proved in Lemma 5.3 of \cite{benjamini2014mixing} the following generalisation. 

\begin{lemma}
\label{lem:CheegerSparseRegime}
    There exists an absolute constant $\gamma >0$ such that the following holds. 
    Let $n \in \mathbb{N}$, and fix a sequence of integers $d_1, \dots, d_n$ such that each of them is in the interval $[3, n^{0.02}]$ and $\sum_{i=1}^n d_i$ is even.
    Let $G$ be a random multigraph with given degree distribution $d_1, \dots, d_n$.
    Then $G$ is a $\gamma$-expander, with high probability.
\end{lemma}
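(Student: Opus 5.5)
The plan is a first-moment argument in the configuration model. Realise $G$ as a uniformly random perfect matching of $M=\sum_i d_i$ half-edges, with $d_i$ half-edges attached to vertex $i$. Since $G$ is allowed to be a multigraph, the statement is exactly about this model and no conditioning on simplicity is needed. Note that $M\ge 3n$. For a fixed set $S$ with $d(S)=s$, the event $\Phi(S)<\gamma$ says that at most $t:=\lfloor \gamma s\rfloor$ of the $s$ half-edges of $S$ are matched outside $S$, and the remaining ones are matched among themselves. I will bound the expected number of such "bad" sets and show it is $o(1)$.

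\textbf{Step 1: probability for a fixed $S$.} Choose which $j\le t$ half-edges of $S$ go out. The probability that the other $s-j$ are matched inside $S$ is at most roughly $\prod_{i<(s-j)/2} \frac{s}{M-2i}$. This gives
\[
\mathbb{P}(\Phi(S)<\gamma)\le \sum_{j\le t}\binom{s}{j}\Bigl(\frac{Cs}{M}\Bigr)^{(s-j)/2}\le 2^{H(\gamma)s}\Bigl(\frac{Cs}{M}\Bigr)^{(1-\gamma)s/2}.
\]
For $s$ proportional to $M$ this crude bound must be replaced by the exact count: compare the number of matchings with $j$ crossing pairs, $\binom{s}{j}\binom{M-s}{j}j!\,(s-j-1)!!\,(M-s-j-1)!!$, to $(M-1)!!$. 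That ratio is $\exp(-\Omega(s))$ when $j\le\gamma s$, $s\le M/2$, and $\gamma$ is small.

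\textbf{Step 2: union bound.} Since every degree is at least $3$, a set with $d(S)=s$ has $|S|\le s/3$. So the number of candidate sets is at most $\sum_{r\le s/3}\binom{n}{r}\le (3en/s)^{s/3}$. In the range $s\le cn$, the product with the Step 1 bound is at most
\[
\Bigl(\frac{s}{3n}\Bigr)^{s/2-s/3-O(\gamma s)}e^{O(s)} ,
\]
which is summable and $o(1)$ for $c,\gamma$ small. For very small $s$ the exponent $s/6$ gives polynomial decay, and one checks that $s\ge 3$ already suffices up to constants. In the range $cn\le s\le M/2$, I count sets by $|S|=r$ rather than by $s$, giving at most $2^n\le 2^{M/3}$ of them, against a probability $e^{-\beta s}\le e^{-\beta c n}$ from the exact count. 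This works provided $\beta c$ beats $\log 2$. Otherwise I refine by using that a bad set has $s$ half-edges of which at most $\gamma s$ cross, so the complement also has few crossing edges and the exact-count ratio becomes $e^{-\Omega(M)}$ when $s=\Theta(M)$. The maximum degree $n^{0.02}$ enters only to keep $M\le n^{1.02}$ and to control the $\log$ factors in the sum over $s$.

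\textbf{Step 3: sets with $|S|\le n/2$ but $d(S)>M/2$.} The definition of an $\alpha$-expander uses $|S|\le |V(G)|/2$, not $d(S)\le M/2$. For such $S$ I apply the previous bound to $S^c$, which gives $e(S,S^c)\ge\gamma d(S^c)$. I then need $d(S^c)\ge \gamma' d(S)$. This can fail if the degrees are very uneven, so the estimates of Steps 1--2 should be carried out directly for all $S$ with $|S|\le n/2$, bounding the probability that at most $\gamma d(S)$ half-edges escape. The same computation applies, because the relevant quantity is $\binom{M-s}{j}$ against $\binom{s}{j}$ and $M-s\ge 3|S^c|\ge 3n/2$.

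\textbf{Main obstacle.} I expect the main obstacle to be the middle range $s=\Theta(M)$, where the entropy of choosing $S$ competes with the probability bound. Here one must track the exact matching count, not the crude product, and choose $\gamma$ small enough relative to the absolute constants. This is the part of Bollob\'as' regular-graph argument that I would adapt.
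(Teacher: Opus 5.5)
\textbf{Main gap: Step 3.} The paper does not prove this lemma; it imports it from Benjamini--Kozma--Wormald. A first-moment argument in the configuration model, as in Bollob\'as' proof for random regular graphs, is the natural route, and your Steps 1--2 carry it out for sets with $d(S)\le M/2$.

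In Step 3 you correctly suspect that the reduction to $S^c$ fails for uneven degrees. You then assert that a direct computation handles every $S$ with $|S|\le n/2$, and this cannot be true. Since $e(S,S^c)\le d(S^c)$ in every multigraph, the event that at most $\gamma d(S)$ half-edges leave $S$ has probability $1$ once $d(S^c)\le \gamma d(S)$: your sum over $j\le\gamma s$ then covers every possible $j\le M-s$.

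Such sets exist under the hypotheses. For large $n$ with $4\mid n$, give $n/2$ vertices degree $3$ and $n/2$ vertices degree $D=\lfloor n^{0.02}\rfloor$, and let $S$ be the high-degree half. Then $|S|=n/2$, but $\Phi(S)\le (3n/2)/(Dn/2)=3/D\to 0$ for every multigraph with this degree sequence. So, with the paper's normalisation $|S|\le |V(G)|/2$, the statement as written is false in this generality.

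What your argument can and should prove is the conductance version, over all $S$ with $d(S)\le M/2$. Your reduction to $S^c$ then recovers the $|S|\le n/2$ version under an extra bounded-average-degree hypothesis $M\le \bar d\, n$. For $|S|\le n/2$ and $d(S)>M/2$, it gives $e(S,S^c)\ge \gamma d(S^c)\ge 3\gamma n/2\ge \frac{3\gamma}{2\bar d}\,d(S)$. That is enough for the application, since the kernel of the $2$-core has average degree close to $3$.

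\textbf{Smaller point: the middle range of Step 2.} Bounding the number of sets by $2^n$ for $cn\le s\le M/2$ does not close when $c$ is small. The probability that $S$ is closed, $\binom{M/2}{s/2}/\binom{M}{s}$, is only about $\binom{M}{s}^{-1/2}$. When $M=3n$ and $s/M$ is small, this is much larger than $2^{-n}$. Passing to the complement does not recover the lost entropy.

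No separate middle range is needed. Since $n\le M/3$ and $s\le M/2$, your own count gives $\sum_{r\le s/3}\binom{n}{r}\le (s/3+1)\binom{\lfloor M/3\rfloor}{\lfloor s/3\rfloor}$, which is roughly $\binom{M}{s}^{1/3}$. Hence, up to polynomial factors, the expected number of bad sets with $d(S)=s$ is about $\binom{M}{s}^{-1/6+O(\gamma\log(1/\gamma))}$, uniformly in $s\le M/2$. This sums to $o(1)$ once $\gamma$ is small.
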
 

A direct corollary of \Cref{thm:DingAnatomy2coreSparseRegime}, the second point of Lemma \ref{lem:BoundModelAnatomySparseRegime}, and Lemma \ref{lem:CheegerSparseRegime} is the following.

\begin{Corollary}
\label{cor:2coreExpanderSparseRegime}
There exists an absolute constant $\alpha>0$ such that, for every fixed $\eps>0$ and $p = (1+\eps)/n$, the kernel of the $2$-core of $G(n,p)$ is an $\alpha$-expander, with high probability. 
\end{Corollary}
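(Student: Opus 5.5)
The plan is to transfer the expansion statement from the model $\tilde{\mathcal{C}}^{(2)}_{1+\eps}$ to the true $2$-core by contiguity. Let $\mathcal{A}$ be the event that the kernel of a graph is \emph{not} an $\alpha$-expander, where $\alpha=\gamma$ is the constant from \Cref{lem:CheegerSparseRegime}. By \Cref{thm:DingAnatomy2coreSparseRegime}, it suffices to show that $\mathbb{P}(\tilde{\mathcal{C}}^{(2)} \in \mathcal{A}) \to 0$.

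Two preliminary remarks are needed. First, replacing edges by paths of length at least one leaves the kernel unchanged as a multigraph. Vertices of degree at least $3$ in $\mathcal{K}$ stay the branch vertices, and the subdivided paths are exactly the maximal bare paths. So the kernel of $\tilde{\mathcal{C}}^{(2)}$ is the multigraph $\mathcal{K}$ from step 1 of \Cref{def:Ding-Lubetzky-Peres-model}; loops and multi-edges are kept with the same conventions. Second, the statement concerns the $2$-core of $G(n,p)$, whereas the model describes the $2$-core of the giant component. Whp all other components of $G(n,p)$ are trees or unicyclic, so they contribute only cycles to the $2$-core and no kernel vertices of degree $\ge 3$. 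I will therefore read the corollary as concerning the kernel of the giant's $2$-core, or equivalently discard cycle components, which have empty kernel.

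The main step is to condition on the degree sequence. Condition on $\Lambda$ and on $(D_u)_u$. Given these, $\mathcal{K}$ is a uniform random multigraph on $N$ vertices whose degrees are the values $D_u \ge 3$, and the sum of these degrees is even by the conditioning in the model. By the last part of \Cref{lem:BoundModelAnatomySparseRegime}, whp the maximum degree is at most $10\log n$. The same lemma gives $N \ge \eps^3 n/1000$ whp, so for fixed $\eps$ we have $10\log n \le N^{0.02}$ for large $n$. Hence whp the degree sequence lies in $[3, N^{0.02}]$ with even sum. Let $\mathcal{G}$ be this good event on degree sequences; \Cref{lem:CheegerSparseRegime}, applied with $N$ in place of $n$, gives $\mathbb{P}(\mathcal{A}\mid (D_u)) \to 0$ for every sequence in $\mathcal{G}$.

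To conclude, I need this convergence to hold uniformly over the sequences in $\mathcal{G}$, because I want to write $\mathbb{P}(\mathcal{A}) \le \mathbb{P}(\mathcal{G}^c) + \sup_{\mathcal{G}} \mathbb{P}(\mathcal{A}\mid (D_u))$. This uniformity is the main obstacle: the lemma is stated for a fixed sequence along $n$. The standard remedy is a subsequence argument. If the supremum did not tend to $0$, choose a bad sequence for each $N$ along a subsequence, and extend it to all $N$ by arbitrary admissible sequences. This contradicts \Cref{lem:CheegerSparseRegime} applied to that sequence of degree sequences, since the lemma's conclusion holds for any such sequence.

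The constant $\alpha=\gamma$ is absolute, and $\eps$ enters only through requiring $n$ large. Contiguity then transfers the statement to the $2$-core of $G(n,p)$.
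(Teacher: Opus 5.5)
Your proposal is correct and follows the same route as the paper: bound the maximum kernel degree in the model by $10\log n$ via \Cref{lem:BoundModelAnatomySparseRegime}, apply \Cref{lem:CheegerSparseRegime} conditionally on the degree sequence to get an absolute expansion constant, and transfer to $G(n,p)$ by contiguity (\Cref{thm:DingAnatomy2coreSparseRegime}). The paper's three-line proof leaves several points implicit, and you handle each correctly: that the model's kernel is exactly $\mathcal{K}$, that degrees must be compared with $N^{0.02}$ rather than $n^{0.02}$, that the bound must hold uniformly over degree sequences, and that the contiguity theorem concerns the giant's $2$-core rather than the whole $2$-core.
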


\begin{proof}
    By \Cref{lem:BoundModelAnatomySparseRegime}, with high probability, for every $u \in [n]$, we have $D_u \leq 10 \log n$ in \Cref{def:Ding-Lubetzky-Peres-model}.
    Therefore, by \Cref{lem:CheegerSparseRegime}, we have that the kernel of the model $\tilde{\mathcal{C}}^{(2)}_1$ from \Cref{def:Ding-Lubetzky-Peres-model} is an $\alpha$-expander for some absolute constant $\alpha >0$.
    As the $2$-core of $G(n,p)$ is contiguous to the model $\tilde{\mathcal{C}}^{(2)}_1$ by \Cref{thm:DingAnatomy2coreSparseRegime}, it follows that the kernel of the the $2$-core of $G(n,p)$ is an $\alpha$-expander with high probability.
\end{proof}

We will also make use of the following lemma bounding the number of maximal bare paths in the $2$-core of $G(n,\frac{1+\eps}{n})$. 
Recall that in a graph $G$, a \emph{bare path} is a path $u_0u_1\dots u_k$ for some $k \in \mathbb{N}$ such that the internal vertices $u_1, \dots, u_{k-1}$ all have degree $2$ in $G$.
Also recall that the \emph{kernel} $\mathcal{K}(G)$ of a graph $G$ is the graph $G'$ obtained from $G$ by replacing every maximal bare path with a single edge between their endpoints.

\begin{lemma}
\label{lem:NumberBarePathsSparseRegime}
For $\eps >0$ and $0 < \gamma < 1$ fixed, let $p = (1+\eps)/n$, and let $G \sim G(n,p)$. 
Then the $2$-core of $G$ contains at most $(\gamma/10^8)|\mathcal{K}(G)|$ maximal bare paths of length at least $100/(\gamma \eps)$, with high probability. 
\end{lemma}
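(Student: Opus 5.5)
We work in the model $\tilde{\mathcal{C}}^{(2)}_{1+\eps}$ of \Cref{def:Ding-Lubetzky-Peres-model} and transfer back to $G(n,p)$ via the contiguity of \Cref{thm:DingAnatomy2coreSparseRegime}. This transfer is legitimate because the event ``more than $(\gamma/10^8)|\mathcal{K}(G)|$ maximal bare paths of length at least $100/(\gamma\eps)$'' depends only on the $2$-core. Moreover, the $2$-core of $G(n,p)$ is whp the $2$-core of the giant component together with $O(\log n)$ vertices lying in short cycles, so it suffices to handle the giant component's $2$-core. In the model, the maximal bare paths are exactly the edges of the kernel $\mathcal{K}$, each subdivided into a path with an i.i.d.\ $\Geom(1-\mu)$ length (short cycles attached to degree-$2$ vertices do not occur, since all kernel vertices have degree at least $3$).

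The first step is to compute $1-\mu$. From $\mu e^{-\mu}=\lambda e^{-\lambda}$ with $\lambda=1+\eps$, expanding around $1$ gives $\mu = 1-\eps+O(\eps^2)$, so $1-\mu \ge \eps/2$ for $\eps$ small. (For larger fixed $\eps$ we have $1-\mu\ge c>0$, which is even better, and the constants $10^8$ leave ample room.) Hence for a single path length $L\sim\Geom(1-\mu)$,
\begin{align*}
\mathbb{P}\bigl(L \ge 100/(\gamma\eps)\bigr) \le \mu^{100/(\gamma\eps)-1} \le 2\exp\bigl(-(1-\mu)\cdot 100/(\gamma\eps)\bigr)\le 2e^{-50/\gamma}.
\end{align*}
We then need $2e^{-50/\gamma} \le \gamma/(2\cdot 10^8)$ for all $\gamma\in(0,1)$. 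Since $e^{-50/\gamma}\le e^{-50}\gamma$ holds for $\gamma<1$ (because $e^{-50/\gamma}/\gamma$ is increasing on $(0,1)$), this holds, as $e^{-50}\approx 2\cdot 10^{-22}$.

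Next, condition on $\mathcal{K}$. Its number of edges $M=|E(\mathcal{K})|$ is whp at least $\eps^3 n/1000$ by \Cref{lem:BoundModelAnatomySparseRegime}, and the path lengths are independent of $\mathcal{K}$. The number of long bare paths is then $\mathrm{Bin}(M,q)$ with $q\le \gamma/(2\cdot 10^8)$. A Chernoff bound gives that it is at most $(\gamma/10^8) M$ with probability $1-\exp(-\Omega_{\eps,\gamma}(n))$. Finally, to compare with $|\mathcal{K}(G)|$, interpret $|\mathcal{K}(G)|$ as the number of edges, or note that $|E(\mathcal{K})|\ge \tfrac32|V(\mathcal{K})|$ and adjust the factor $2$ slack accordingly. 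If vertex count is intended, use that \Cref{lem:BoundModelAnatomySparseRegime} gives $|E|\le 32\eps^3n$ and $|V|\ge \eps^3 n/1000$, which would cost a constant factor; in that case the exponential bound $e^{-50/\gamma}$ still absorbs a factor $32000$.

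The main obstacle is the careful transfer between the model and $G(n,p)$: making sure that bare paths of the actual $2$-core (including any small components outside the giant) correspond to subdivided kernel edges, and that the small components contribute only $O(\log n)=o(n)$ paths, which is negligible against the linear lower bound on $|\mathcal{K}|$.
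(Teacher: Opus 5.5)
Your proposal is correct and follows the paper's proof: pass to the Ding--Lubetzky--Peres model via contiguity, use $1-\mu\ge\eps/2$ to bound the $\Geom(1-\mu)$ tail by roughly $e^{-50/\gamma}\le \gamma/10^{9}$, and conclude with a Chernoff bound over the independent subdivision lengths. Your treatment of the non-giant cycles and of the vertex-versus-edge count of the kernel is extra care that the paper omits. Two small remarks. First, the bound $|E(\mathcal{K})|\ge\frac32|V(\mathcal{K})|$ points the wrong way; what you need is an upper bound such as $|E(\mathcal{K})|\le 32000\,|V(\mathcal{K})|$, which you then supply. Second, your aside that larger $\eps$ is ``even better'' fails once $\eps\ge 100/\gamma$, since then every bare path counts as long; like the paper, you only need, and only have, the small-$\eps$ case.
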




\begin{proof}
    By \Cref{thm:DingAnatomy2coreSparseRegime}, it suffices to prove this statement for the model $\tilde{\mathcal{C}}^{(2)}_1$ described in that result.
    Let $\tilde{\mathcal{K}}$ be the kernel of the $2$-core in this model.
    By Lemma \ref{lem:BoundModelAnatomySparseRegime} we have $|V(\tilde{\mathcal{K}})| = \Omega(\eps^3 n)$, with high probability.
    In this model, each edge between two vertices of $\tilde{\mathcal{K}}$ is subdivided by a random variable following the geometric distribution $\Geom(1-\mu)$, where each subdivision is independent of all the others.
    Given that $\mu \leq 1-\eps/2$, we can bound the probability that the length of such a path is at least $100 /(\gamma \eps)$ as follows:
    \begin{align*}
        \mathbb{P}\left(\Geom(1-\mu) \geq \frac{100}{\gamma \eps}\right) = \sum_{k \geq 100 /(\gamma \eps)} \mu^{k-1} (1-\mu) &= \mu^{100/(\gamma \eps)} \leq (1-\eps/2)^{100/(\gamma \eps)} \leq \gamma/10^9.
    \end{align*}
    A standard application of Chernoff bound concludes the proof. 
\end{proof}

\section{Resilience of expanders}

In this section we show the following lemma about resilience of expanders with minimum degree at least $3$ with respect to removing a small enough number of edges.



\begin{lemma}
\label{lem:RemovingEdges2core}
     For $c\in (0,1]$, let $m, N$ be integers satisfying $1\leq m \leq cN/100$. 
     Let $G$ be a $c$-expander graph on $N$ vertices, with $\delta(G) \geq 3$, and let $E_0 \subset E(G)$, with $|E_0| = m$. Then there exists a subgraph $G'$ of $G \setminus E_0$ such that
    \begin{itemize}
        \item $|V(G')| \geq N/2$;
         \item $\delta(G')\geq 2$;
        \item $G'$ contains at least $N/4$ vertices of degree at least $3$;
        \item $G'$ is a $c/10$-expander.
    \end{itemize}
\end{lemma}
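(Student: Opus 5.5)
The plan is an \emph{expander pruning} argument: start from $H_0 = G \setminus E_0$ and repeatedly delete small, badly expanding pieces. Then show, using the expansion of $G$, that the total deleted part is tiny.

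\textbf{The pruning procedure.} Run the following two kinds of moves in any order until neither applies, and let $G'$ be the final graph.
\begin{itemize}
\item[(a)] If some vertex $v$ of the current graph $H_t$ has $d_{H_t}(v) \leq 1$, delete $v$.
\item[(b)] If some $S \subseteq V(H_t)$ with $|S| \leq |V(H_t)|/2$ has $e_{H_t}(S, V(H_t)\setminus S) < (c/10)\, d_{H_t}(S)$, delete all of $S$.
\end{itemize}
At termination, $\delta(G') \geq 2$ and $G'$ is a $c/10$-expander by construction. Let $X = V(G) \setminus V(G')$ be the set of all deleted vertices.

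\textbf{Key estimate.} The heart of the proof is to show $d_G(X) \leq C m / c$ for an absolute constant $C$. Every edge of $G$ leaving $X$ is of one of three kinds: an edge of $E_0$; an edge counted in the boundary of a type-(b) piece at the moment that piece was deleted; or the at most one remaining edge of a type-(a) vertex.

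Summing the boundaries of the type-(b) pieces at their deletion times gives at most $(c/10)\, d_G(X)$, since each piece's current degree sum is at most its $G$-degree sum. For a type-(a) vertex $v$, we have $d_G(v) \geq 3$ but it retains at most one edge. So at least $\tfrac{2}{3} d_G(v)$ of its $G$-edges went to $E_0$ or to previously deleted vertices. A charging argument (each such edge is charged at most twice) should bound the total contribution of type-(a) vertices by $O(m) + O(c)\, d_G(\text{type-(b) part})$. I would organise this as a potential argument on $d_G(X)$ versus $m + e_G(X, X^c)$.

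Combining these, $e_G(X, X^c) \leq m + (c/10 + \text{small})\, d_G(X)$. If $|X| \leq N/2$, the $c$-expansion of $G$ gives $c\, d_G(X) \leq e_G(X, X^c)$, and hence $d_G(X) \leq 2m/c \leq N/50$.

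\textbf{Main obstacle.} The difficulty is guaranteeing $|X| \leq N/2$ throughout the process. A single type-(b) piece may contain up to half of the current vertices, so the union could jump past $N/2$. I would handle this by induction over the steps.

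Assume $|X_t| \leq N/150$ before a step, and suppose the new union $X_{t+1}$ exceeds $N/2$ vertices. Then apply the expansion of $G$ to the complement $Y = V \setminus X_{t+1}$, which is nonempty, has at most $N/2$ vertices, and contains at least the other half of $H_t$. Since $e(S, H_t \setminus S) = e(H_t \setminus S, S)$ and the cut is small compared to $d(S)$, the cut is small compared to $d_{H_t}(Y)$ as well, provided the volumes are comparable. I would try to force comparable volumes by choosing, among violating pieces, one of minimal volume. If that fails, I would change condition (b) to a volume-based condition $d_{H_t}(S) \leq d(H_t)/2$ and translate between vertex size and volume using $\delta(G) \geq 3$. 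This translation is where I expect the real work to be.

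\textbf{Conclusion.} Once $d_G(X) \leq N/50$, we get $|X| \leq d_G(X)/3 \leq N/150$, so $|V(G')| \geq N/2$. For the degree-$3$ count, every vertex not incident to $E_0$ and not adjacent to $X$ keeps its full degree, which is at least $3$. There are at least $N - 2m - d_G(X) - |X| \geq N - cN/50 - N/50 - N/150 \geq N/4$ such vertices.
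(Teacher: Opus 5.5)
Your pruning procedure is sound. Your rule (b) uses $d_{H_t}(S)$ where the paper uses $d_G(S)$, which only makes deletions rarer. Your classification of the edges leaving $X$ is correct. Your charging of type-(a) vertices also works: their number is at most $2m$ plus the total boundary of the type-(b) pieces. The final degree-$3$ count is fine once $d_G(X)=O(m/c)$.

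\textbf{The gap.} It is the step you flag yourself. The inequality $c\,d_G(X)\le e_G(X,X^c)$ requires $|X|\le N/2$, and the proposal never establishes this. The route you sketch does not close it. Applying expansion of $G$ to the complement $Y$ only gives $c\,d_G(Y)<\frac{c}{10}d_{H_t}(S)+(\text{small})$. That is no contradiction unless $d_{H_t}(S)$ is at most a constant times $d_G(Y)$, and since degrees in $G$ are unbounded, $|Y|\ge |S|$ says nothing about volumes. Choosing a minimal-volume violator does not obviously help. Switching (b) to a volume condition leaves you with expansion only for sets of at most half the volume, which you would still have to convert to the vertex-count definition of an expander. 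Neither option is carried out.

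\textbf{The missing idea.} Do not apply expansion to the union at all. Apply it to each deleted piece at the moment it is deleted; this is legitimate because $|S|\le |V(H_t)|/2\le N/2$. Track the total degree deficit of the surviving vertices,
$\Phi_t=\sum_{v\in V(H_t)}\bigl(d_G(v)-d_{H_t}(v)\bigr)$, which is the potential already implicit in your charging argument. It starts at $\Phi_0=2m$ and never becomes negative.

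A type-(a) step lowers $\Phi$ by at least $1$. The deleted vertex carries deficit at least $3-1=2$, and at most one neighbour gains $1$.

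A type-(b) step removes deficit at least $e_G(S,V\setminus S)-e_{H_t}(S,V(H_t)\setminus S)\ge c\,d_G(S)-\frac{c}{10}d_G(S)$. It adds less than $\frac{c}{10}d_G(S)$ on the other side. So it lowers $\Phi$ by at least $\frac{4c}{5}d_G(S)$.

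Hence there are at most $2m$ type-(a) vertices, and the type-(b) part has $G$-volume at most $\frac{5m}{2c}\le N/40$. So $|X|$ is tiny with no need to control the union. In addition, $\Phi_{\mathrm{final}}\le 2m$ directly bounds the number of degree-$2$ vertices of $G'$. This is exactly the weight argument in the paper.

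Within your own induction the same observation is the repair. Bound the new piece directly by $c\,d_G(S)\le e_G(S,V\setminus S)<\frac{c}{10}d_G(S)+e_G(X_t,X_t^c)+m$, rather than looking at the complement. This shows $S$ is small, so the union stays below $N/2$.
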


\begin{proof}
    We will define a sequence of subgraphs $(G_i)_{i \geq 0}$ of $G$ such that $G_{i+1} \subseteq G_i$, and a (non-negative) weight function $w_i$ on the edge set of $G$ for each $i$.
    We greedily remove vertices and edges from the graph $G_i$ while the expansion condition or the minimum degree condition is violated. 
    By definition we will have $\sum w_i(e) \geq 0$, but each of these modifications will decrease the value of $\sum w_i(e)$ by at least $4c|S_i|/5$, where $S_i$ is the set of vertices removed from $G_i$ to obtain $G_{i+1}$.
    This will ensure the algorithm terminates without having removed many vertices.
    We now step through the details of this proof. \\
    We first define a weight function $w : E(G) \rightarrow \R_{\geq 0}$ on the edges of the graph $G$ as follows: each edge in the set $E_0$ initially receives a weight of 2, while all other edges are assigned a weight of 0.
    Next we set $G_0=G \setminus E_0$ and $w_0=w$.
    We define the graph $G_{i+1}$ from $G_i$ and update the weight function $w_{i+1}$ from $w_i$ so that for every deleted edge $e$, the weight $w_i(e)$ is simply the number of its endpoints that are present in the graph $G_i$, whereas for every non-deleted edge $e$, we set $w_i(e)$ to $0$.
    More precisely, we proceed as follows:
    \begin{itemize}
        \item If $G_i$ contains a vertex $u$ of degree $0$, or a vertex $u$ of degree 1 (with a single neighbour $v$), then we remove vertex $u$ and, (if it exists) the edge $uv$ from $G_i$, obtaining $G_{i+1}$.
        Let $v_1, \dots, v_k$ be the other neighbours of $u$ in $G$, and note that $k \geq 2$ as we assumed that $G$ has minimum degree at least $3$.
        We update the weight function to obtain $w_{i+1}$ from $w_i$ by decreasing the weight of each edge $uv_1, \dots, uv_k$ by $1$, and setting the weight $w_{i+1}(uv)=1$, if the vertex $v$ exists.
        Note that this ensures 
        \begin{align} \label{eq:IneqWeight1SparseRegime}
            1+\sum_{e \in E(G)} w_{i+1}(e) \leq \sum_{e \in E(G)} w_{i}(e).
        \end{align}
        \item If $G_i$ contains a set of vertices $S$ such that $e_{G_i}(S,S^c) < c(\sum_{v \in S} d_G(v))/10$ and $|S| \leq |V(G_i)|/2$, then we remove all vertices of $S$ from $G_i$ to obtain $G_{i+1}$.
        The weight function is updated so that each edge in $E(G_i)$ with exactly one endpoint in $S$ is set to 1, and for each edge in $E(G) \setminus E(G_i)$ with exactly one endpoint in $S$, the weight is decreased by 1. The weight of any edge in $E(S)$ is set to $0$.
        As $G$ is by assumption a $c$-expander, this ensures that 
        \begin{align} \label{eq:IneqWeight2SparseRegime}
            4c|S|/5+\sum_{e \in E(G)} w_{i+1}(e) \leq 4c\sum_{v \in S} d_G(v)/5+\sum_{e \in E(G)} w_{i+1}(e) \leq \sum_{e \in E(G)} w_{i}(e).
        \end{align}
    \end{itemize}
    We claim that the resulting graph $G'$ from this process has the desired properties.
    By construction, $G'$ is a subgraph of $G \setminus E_m$, has minimum degree $2$ and is a $c/10$-expander.
    Hence, the only remaining step is to verify that $G'$ has size at least $N/2$ and contains at least $N/4$ vertices of degree at least $3$.
    Let $D$ be the total number of vertices removed during the process.
    Let $S_i$ and $S_f$ be the sum of the weights at the beginning and end of the process, respectively. 
    By definition, $S_i = 2m$ and since the weight of an edge is non-negative, we have $S_f \geq 0$. By \eqref{eq:IneqWeight1SparseRegime} and \eqref{eq:IneqWeight2SparseRegime} we have $S_f \leq S_i - 4cD/5$.
    By combining the three previous equations, we obtain $D \leq \frac{10m}{4c} \leq N/2$.
    Moreover, let $M$ be the number of vertices of degree $2$ in $G'$. 
    It follows that $S_f \geq M/2$, and therefore $M/2 \leq S_i \leq N/50$.
    Therefore, $G'$ contains at least $N/4$ vertices of degree at least $3$, as wanted.
\end{proof}

\section{Setting up the host graph for reconstruction} 
\label{sec:PrelSparseRegime}


We start by introducing the following definition.
A graph $H$ is said to be a $(n,\eps, \gamma)$-\emph{good graph} if the following holds:
\begin{enumerate}
        \item $H$ contains at least $\eps^3 n/10^5$ vertices of degree at least $3$,
        \item $H$ is a $\gamma \eps$-expander,
        \item $H$ contains no cycle of length at most $\frac{\log n}{2\eps}$,
        \item $H$ contains no bare path of length at least $\frac{1}{\gamma \eps}$,
        \item $\Delta(H) \leq 100 \log \log n$ and $\delta(H) \geq 2$,
        \item if $S \subseteq V(H)$ then $e(S,S^c) \geq 2$, where $S^c=V(H) \setminus S$
        \item if $S \subseteq V(H)$, $|S| \leq |V(H)|/2$ and $S$ contains at least one vertex of degree at least $3$, then $|N_{H}(S)|\geq 3$.
    \end{enumerate}

Using the tools developed in the previous parts of this paper, we show in this section that we can find a $(n,\eps,\gamma)$-good graph with high probability in $G(n,(1+\eps)/n)$, for some absolute constant $\gamma>0$.


\begin{lemma}
\label{lem:Structure2coreSparseRegime}
    There exists some constant $\gamma>0$ such that the following holds.
    For $\eps >0$ fixed and sufficiently small, let $p = (1+\eps)/n$ and let $G \sim G(n,p)$. 
    Then with high probability there exists a subgraph $G'$ of $G$ such that $G'$ is $(n,\eps,\gamma)$-good.
\end{lemma}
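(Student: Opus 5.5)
We start from the $2$-core $C$ of $G$ and its kernel $K=\mathcal{K}(C)$. By \Cref{lem:BoundModelAnatomySparseRegime} (via the contiguity of \Cref{thm:DingAnatomy2coreSparseRegime}), with high probability $|V(K)|\in[\eps^3n/1000,16\eps^3 n]$, and $K$ has minimum degree at least $3$ and maximum degree at most $10\log n$. By \Cref{cor:2coreExpanderSparseRegime}, $K$ is an $\alpha$-expander for an absolute $\alpha$.

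\textbf{Step 1: edges of $K$ to delete.} We mark a set $E_0\subseteq E(K)$ consisting of three kinds of kernel edges:
\begin{itemize}
\item kernel edges whose bare path in $C$ has length at least $100/(\gamma'\eps)$;
\item one kernel edge on each short cycle of $C$, i.e. each cycle of length at most $\log n/(2\eps)$;
\item all edges at kernel vertices of degree larger than $100\log\log n$.
\end{itemize}
Here $\gamma'$ is chosen proportional to $\alpha$. We need $|E_0|\le \alpha|V(K)|/100$. For long bare paths this is \Cref{lem:NumberBarePathsSparseRegime}, applied with $\gamma=\gamma'$. For short cycles, a first-moment count in $G(n,(1+\eps)/n)$ gives expected number $\sum_{k\le \log n/(2\eps)}(1+\eps)^k/(2k)\le n^{1/2+o(1)}$, so Markov's inequality gives $o(\eps^3 n)$ with high probability. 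For high degrees, in the model $\Pr(D_u\ge 100\log\log n)$ is tiny. Summing $D_u\ind_{D_u\ge 100\log\log n}$ and using Chebyshev's inequality then shows these vertices carry $o(n)$ edge-ends; we delete such vertices outright, which costs a negligible number of kernel edges.

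We then apply \Cref{lem:RemovingEdges2core} to $K$ with $c=\alpha$ and $E_0$. This gives $K'\subseteq K\setminus E_0$ with at least $|V(K)|/2$ vertices, $\delta(K')\ge2$, at least $|V(K)|/4\ge \eps^3n/10^4$ vertices of degree at least $3$, and $K'$ an $\alpha/10$-expander.

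\textbf{Step 2: define $G'$.} Let $G'$ be the subgraph of $C$ obtained by replacing each edge of $K'$ by its bare path. Degrees of branch vertices are unchanged and path vertices have degree $2$, so properties (1) and (5) hold.

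Property (4) follows because a bare path of $G'$ either comes from a kernel edge of $K'$, which has length below $100/(\gamma'\eps)$, or arises where $K'$-vertices of degree $2$ concatenate several kernel edges. To control the second case, we strengthen the resilience step so that a $K'$-vertex of degree $2$ never has a neighbour of degree $2$. This is harmless since degree-$2$ vertices number at most $O(|E_0|)$, and it makes concatenations at most doubled. The constants are then absorbed by taking $\gamma$ small.

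Property (3) holds because every cycle of $G'$ is a cycle of $C$ avoiding a deleted edge.

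\textbf{Step 3: expansion of $G'$.} Property (2) is the main obstacle. Subdividing each edge of an $\alpha/10$-expander into at most $L=O(1/(\gamma'\eps))$ pieces gives conductance of order $\alpha/L$, which is $\Theta(\gamma\eps)$ with our choice of $\gamma'$. To see this, take $S\subseteq V(G')$ with $|S|\le |V(G')|/2$ and round it to a kernel set $S_K$ (the branch vertices in $S$, adjusted along paths). Each path cut by $S$ contributes at least one edge to $e(S,S^c)$. The volume of $S$ is at most $L$ times the volume of $S_K$, up to the cut paths. A case split handles the rest: either many paths are partially cut, which directly gives many boundary edges, or $S_K$ captures most of $S$ and the kernel expansion applies. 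The balance condition $|S|\le|V|/2$ transfers to $S_K$ up to constants, using that paths have bounded length.

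Properties (6) and (7) are combinatorial consequences of the structure. For (6), $G'$ is connected with $\delta\ge2$ and contains no bridge, since a bridge in $K'$ would contradict expansion for sets of degree-$3$ vertices, and expansion with $\gamma\eps|S|\cdot 2>1$ gives $e(S,S^c)\ge 2$ except for small $S$, which we handle by $2$-edge-connectivity. For (7), a set containing a branch vertex of degree at least $3$ that had at most $2$ neighbours outside would, after contracting bare paths, give a kernel set with cut of size at most $2$. For small sets this contradicts the girth condition (3), since a small subgraph with two exits and a degree-$3$ vertex contains a short cycle; for large sets it contradicts expansion.
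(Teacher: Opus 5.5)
Your overall route is the paper's. You take the kernel $K$ of the $2$-core and put into $E_0$ the kernel edges carrying long bare paths, one edge from each short cycle, and all edges at high-degree vertices. You check $|E_0|\le \alpha|V(K)|/100$, apply \Cref{lem:RemovingEdges2core}, re-subdivide, and read off the seven properties with $\gamma$ of order $\alpha^2$. However, two of the property checks do not go through as written, and a third is imprecise.

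\textbf{Property (4).} Your strengthened resilience step (no two adjacent degree-$2$ vertices in $K'$) is asserted rather than proved. Knowing that $K'$ has only $O(|E_0|)$ vertices of degree $2$ does not let you delete adjacent pairs. Deleting them creates new low-degree vertices, and the weight argument of \Cref{lem:RemovingEdges2core} does not pay for such a step: removing two adjacent degree-$2$ vertices need not decrease the total weight. The strengthening is also unnecessary. Suppose $K'$ had a bare path with $k$ edges, and let $S$ be its interior. Then $e(S,S^c)\le 2$ and $d(S)=2(k-1)$. Moreover $|S|\le|V(K')|/2$, since otherwise $S^c$, which contains all of the at least $|V(K)|/4$ vertices of degree at least $3$, would violate expansion. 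So $\alpha/10$-expansion of $K'$ forces $k\le 10/\alpha+1$, and every bare path of $G'$ has length $O(1/(\alpha^2\eps))$. This is exactly how the paper argues.

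\textbf{Property (6).} Expansion cannot rule out a bridge whose small side has volume at most $10/\alpha$; a short cycle hanging off a single edge has conductance bounded below by a constant. So invoking $2$-edge-connectivity for small $S$ is circular. The missing ingredient is the girth property (3). If $|S|<\log n/(2\eps)$, then $G'[S]$ is a forest, and $\delta(G')\ge 2$ gives at least two boundary edges: two from an isolated vertex, otherwise one from each of at least two leaves. Larger $S$ give $e(S,S^c)\ge \gamma\eps\, d(S)\ge \gamma\log n$.

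\textbf{Property (7).} Having at most two outside neighbours does not bound the cut by $2$. For large $S$, you need $\Delta(G')\le 100\log\log n$ to convert the $\Omega(\gamma\log n)$ cut edges into at least three neighbours. For small $S$, follow three branches from a vertex of degree at least $3$ in $S$ until each leaves $S$. The three exit vertices are distinct, since otherwise $S$ together with them would contain a cycle of length less than $\log n/(2\eps)$.
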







We will first need the following results about random graphs. 
The first result is a simple consequence of Chernoff inequality.

\begin{lemma}
\label{lem:HighDegreeRandomGraphsSparseRegime}
For fixed $\eps \in (0,1)$, let $p = (1+\eps)/n$.  
Then $G(n,p)$ contains at most $n e^{-10 \log \log n}$ vertices of degree at least $100 \log \log n$, with high probability. 
\end{lemma}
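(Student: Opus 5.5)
We bound the expectation of the number $X$ of vertices of degree at least $d_0 := 100\log\log n$ and then apply Markov's inequality; a concentration argument is not needed.

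\emph{Tail bound for one vertex.} The degree of a fixed vertex $v$ is distributed as $\mathrm{Bin}(n-1,p)$, with mean $\mu=(n-1)p\le 2$. We use the elementary bound
\begin{align*}
\mathbb{P}(\mathrm{Bin}(n-1,p)\ge k)\le \binom{n-1}{k}p^k\le \frac{(np)^k}{k!}\le \left(\frac{e\,np}{k}\right)^k\le \left(\frac{2e}{k}\right)^k .
\end{align*}
Equivalently, this is the Chernoff bound $\mathbb{P}(D\ge k)\le (e\mu/k)^k$ for $k\ge \mu$. Taking $k=\lceil d_0\rceil$ gives
\begin{align*}
\mathbb{P}(\deg v\ge d_0)\le \exp\bigl(-d_0(\log d_0-\log(2e))\bigr).
\end{align*}
Since $\log d_0\to\infty$, the exponent is at least $d_0 = 100\log\log n$ for large $n$, and in fact $\omega(\log\log n)$. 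Hence $\mathbb{P}(\deg v\ge d_0)\le e^{-20\log\log n}$ for all large $n$.

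\emph{First moment and Markov.} Summing over vertices gives $\mathbb{E}[X]\le n e^{-20\log\log n}$. Markov's inequality then yields
\begin{align*}
\mathbb{P}\bigl(X> n e^{-10\log\log n}\bigr)\le \frac{\mathbb{E}[X]}{n e^{-10\log\log n}} \le e^{-10\log\log n}=(\log n)^{-10}\to 0,
\end{align*}
which is the claim with high probability.

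The only point needing care is the tail estimate: the deviation $d_0$ is much larger than the mean, so the Poisson-type bound $(e\mu/k)^k$ should be used rather than the Gaussian form of Chernoff. With that bound the proof is routine, and no step is a real obstacle.
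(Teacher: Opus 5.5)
Your proof is correct, but it takes a different route from the paper. You bound the expected number $X$ of vertices of degree at least $100\log\log n$ using the per-vertex Poisson-type tail $\mathbb{P}(\mathrm{Bin}(n-1,p)\ge k)\le (e\,np/k)^k$, and then apply Markov's inequality. Because you only use linearity of expectation, the dependence between degrees never enters.

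The paper instead fixes $s=n e^{-10\log\log n}$ and shows that no set $S$ of $s$ vertices can consist entirely of high-degree vertices. Such an $S$ would force at least $50s\log\log n$ edges in $G[S]\cup G[S,S^c]$. Chernoff applied to a binomial with at most $2ns$ trials bounds this by $e^{-20s\log\log n}$, and a union bound over the $\binom{n}{s}\le e^{s(10\log\log n+O(1))}$ choices of $S$ finishes.

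Your argument is shorter. It avoids the double counting of edges inside $S$ and the union bound over sets, at the cost of a weaker failure probability: $(\log n)^{-10}$, or $\exp(-\Omega(\log\log n\cdot\log\log\log n))$ if you keep the full strength of your tail bound. The paper's argument gives failure probability $e^{-\Omega(s\log\log n)}$, which is super-polynomially small. The lemma only asserts a with-high-probability statement, so your weaker bound is entirely sufficient.
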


\begin{proof}
    For each set $S$ of size $|S| =n e^{- 10 \log \log n}$, if each element of $S$ has degree at least $100 \log \log n$ in $G$, then the number of edges in $G[S] \cup G[S, S^c]$ is at least $n e^{-10\log \log n} 50 \log \log n$. 
    Thus, the probability of this event can be bounded from above by 
    \begin{align*}
        \mathbb{P}(Bin(2n|S|,p) \geq 50|S|\log \log n).
    \end{align*}
    By Chernoff's bound, this probability is at most $e^{-20|S| \log \log n}$.
    A simple union bound over all sets $S$ then shows that this bad event happens with probability at most
    \begin{align*}
        \binom{n}{|S|} e^{-20|S|\log \log n} \leq \left(\frac{en}{|S|}\right)^{|S|} e^{-20|S|\log \log n} \leq e^{10|S| \log \log n+O(|S|)-20|S| \log \log n} =o(1).
    \end{align*}
    This finishes the proof.
\end{proof}

\begin{lemma}
\label{lem:NumberShortCyclesSparseRegime}
For fixed $\eps>0$, let $p = (1+\eps)/n$. 
Then $G(n,p)$ contains at most $n^{3/4}$ cycles of length at most $\frac{\log n}{2 \eps}$, with high probability.
\end{lemma}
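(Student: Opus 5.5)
We bound the expected number of short cycles in $G(n,p)$ and then apply Markov's inequality. Let $L = \frac{\log n}{2\eps}$, and let $X$ be the number of cycles of length between $3$ and $L$ in $G \sim G(n,p)$.

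For each $k \in [3, L]$, the number of potential cycles of length $k$ in $K_n$ is $\frac{(n)_k}{2k} \leq \frac{n^k}{2k}$. Each such cycle is present with probability $p^k$, so
\begin{align*}
    \mathbb{E}[X] \leq \sum_{k=3}^{L} \frac{n^k p^k}{2k} \leq \sum_{k=3}^{L} (1+\eps)^k \leq L\,(1+\eps)^{L}.
\end{align*}

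We now estimate this bound. Since $\log(1+\eps) \leq \eps$, we have
\begin{align*}
    (1+\eps)^{L} = \exp\big(L \log(1+\eps)\big) \leq \exp\left(\frac{\log n}{2\eps}\cdot \eps\right) = n^{1/2}.
\end{align*}
Hence $\mathbb{E}[X] \leq \frac{\log n}{2\eps} n^{1/2}$, which is $o(n^{3/4})$ for fixed $\eps$.

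By Markov's inequality,
\begin{align*}
    \mathbb{P}(X > n^{3/4}) \leq \frac{\mathbb{E}[X]}{n^{3/4}} \leq \frac{\log n}{2\eps\, n^{1/4}} = o(1),
\end{align*}
which gives the claim with high probability.

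There is no real obstacle here; the only point needing care is that the exponent $L\log(1+\eps)$ stays below $\frac{1}{2}\log n$. This holds for every $\eps>0$, not only for small $\eps$, because $\log(1+\eps)\leq \eps$.
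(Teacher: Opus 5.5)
Your proof is correct and follows essentially the same route as the paper. The paper also bounds $\mathbb{E}[C_k] \le e^{\eps k} \le n^{1/2}$ for each $k \le \frac{\log n}{2\eps}$, sums over the $O(\log n/\eps)$ possible lengths, and concludes with Markov's inequality against the threshold $n^{3/4}$.
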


\begin{proof}
Let $G$ be sampled as $G(n,\frac{1+\eps}{n})$.
    Let $C_k$ be the number of cycles of size $k$ in $G$, and let $L_{\frac{\log n}{2\eps}}$ be the number of cycles of size at most $\frac{\log n}{2\eps}$ in $G$. 
    We have, for any $k \leq \frac{\log n}{2\eps}$, that
    \begin{align*}
        \mathbb{E}[C_k] = \binom{n}{k} \frac{(k-1)!p^k}{2} \leq e^{\eps k} \leq n^{1/2}.
    \end{align*}
    Therefore $\mathbb{E}[L_{\frac{\log n}{2\eps}}] \leq n^{2/3}$ and Markov's inequality concludes.
\end{proof}


We are now ready to prove \Cref{lem:Structure2coreSparseRegime}.

\begin{proof}[Proof of \Cref{lem:Structure2coreSparseRegime}]
    Let $\mathcal{C}$ be the $2$-core of $G$, and let $\mathcal{K}$ be the kernel of $\mathcal{C}$.
    By Corollary \ref{cor:2coreExpanderSparseRegime}, there exists an absolute constant $\alpha > 0$ such that $\mathcal{K}$ is an $\alpha$-expander, with high probability.
    Let $\gamma = \alpha^2/3000$.
    Let $E_m$ be the set of edges of $\mathcal{K}$ corresponding to bare paths of length at least $100/(\alpha \eps)$ (in $\mathcal{C}$), edges incident to a vertex of degree at least $100 \log \log n$ (in $\mathcal{C}$), and one edge picked in each cycle of length at most $\frac{\log n}{2 \eps}$ (in $\mathcal{C}$).
    By Lemmas \ref{lem:NumberBarePathsSparseRegime}, \ref{lem:BoundModelAnatomySparseRegime}, \ref{lem:HighDegreeRandomGraphsSparseRegime}, and \ref{lem:NumberShortCyclesSparseRegime}, we may apply Lemma \ref{lem:RemovingEdges2core} to $\mathcal{K}$ and $E_m$ to obtain $\mathcal{K}'$.
    Let $G'$ be the graph obtained from $\mathcal{C}$ by removing the associated edges.
    It is clear that $G'$ satisfies the conditions $1$, $3$ and $5$. 
    
    Moreover, since $\mathcal{K}'$ is an $\alpha/10$-expander and $G'$ can be obtained from $\mathcal{K}'$ by subdividing each edge by at most $100/(\alpha \eps)$ times, then $G'$ is a $\alpha^2 \eps/3000$-expander. As $\gamma = \alpha^2/3000$, this shows condition $2$. 
    
    As $\mathcal{K}'$ is a $\alpha/10$-expander, it has in particular no bare path of length $20/\alpha$. 
    As again $G'$ can be obtained from $\mathcal{K}'$ by subdividing each edge by at most $100/(\alpha \eps)$ times, it follows that $G'$ does not contain any bare path of length at least $\frac{2000}{\alpha^2 \eps}$. As $\gamma = \alpha^2/3000$, condition 4 is satisfied. 
    
    Concerning condition $6$, it suffices to deal with the case $|S| \leq |V(G')|/2$.
    If $|S| \geq \log n/(2\eps)$, then we are done as $G'$ is a $\gamma \eps$-expander.
    On the other hand, if $|S| \leq \log n/(2\eps)$, then $S$ does not contain a cycle, and as $G'$ has minimum degree at least $2$, we are done in this case too. 
    
    We prove that condition $7$ holds with a similar reasoning. 
    If $|S| \geq \log n/(2\eps)-3$, then the result follows immediately, as $G'$ is a $\gamma \eps$-expander and has maximum degree at most $100 \log \log n$.
    On the other hand, if $|S| \leq \log n/(2\eps)-3$, then $S$ does not contain a cycle.
    Moreover, $S$ contains a vertex $v$ of degree at least $3$, say with neighbours $w^1$, $w^2$ and $w^3$.
    We construct a sequence $w^1_0, \dots, w^1_t$, starting with $w^1_0=w^1$, and for each $i$, we define $w^1_{i+1}$ to be a neighbour of $w^1_{i}$ in $S$, distinct from $w^1_0, \dots, w^1_i$.   
    When this process stops at step $t$, as $S$ contains no cycle, it follows that $w^1_t$ has a neighbour $y^1$ outside of $S$. 
    We repeat the same process for $w^2$ and $w^3$, and find vertices $y^2$ and $y^3$ in $S^c$, each of them having a neighbour in $S$.
    As $|S \cup \{ y^1, y^2, y^3 \}| \leq \log n/(2\eps)$, it follows that $S \cup \{ y^1, y^2, y^3 \}$ contains no cycle, and therefore $y^1$, $y^2$ and $y^3$ are distinct, finishing the proof.
\end{proof}

\section{The size of the summation set}

In this section, we essentially provide bounds on the size of the set over which the main union bound in the proof of \Cref{thm:mainSparseRegime} is taken. \\

Before we proceed to our result, we recall a few auxiliary tools. 
We begin with the following classical result, that can be found in \cite{bollobas2006art}, exercise 45. 

\begin{lemma}
\label{lem:Number-connected-subgraphsSparseRegime}
    Let $G$ be a graph on $n$ vertices and maximum degree $\Delta$.
    Then the number of connected subgraphs of $G$ of size $s$ is at most $n (e\Delta)^{s}$.
\end{lemma}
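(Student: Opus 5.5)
We bound the number of connected vertex sets of size $s$ containing a fixed root $v$ by $(e\Delta)^{s}$ (more precisely by $(e\Delta)^{s-1}$), and then sum over the $n$ possible roots. We interpret ``connected subgraphs of size $s$'' as vertex subsets $S$ with $|S|=s$ and $G[S]$ connected; these are the objects the union bound later sums over.

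Fix $v$. For each connected $S \ni v$ with $|S|=s$, fix a spanning tree $T_S$ of $G[S]$; the map $S \mapsto T_S$ is injective, so it suffices to count trees $T \subseteq G$ on $s$ vertices containing $v$. Such a tree can be encoded by a closed walk in $G$ starting at $v$ that traverses each edge of $T$ exactly twice. This is the contour of a depth-first traversal from $v$, and it has length $2(s-1)$. At each step the walk either goes down to a new child, with at most $\Delta$ choices of neighbour, or goes back to the parent, which is forced. The traversal is therefore determined by a Dyck word of length $2(s-1)$ recording the down/up pattern, together with a neighbour choice at each of the $s-1$ down-steps. The number of trees is thus at most $C_{s-1}\Delta^{s-1} \le 4^{s-1}\Delta^{s-1}$, where $C_{s-1}$ is a Catalan number. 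This is of the right shape but has base $4\Delta$ rather than $e\Delta$.

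To obtain the constant $e$, I would use the sharper classical count instead: the number of subtrees on $s$ vertices containing $v$ in the infinite $\Delta$-regular tree, which dominates $G$ via its universal cover. That number is
\[
\frac{1}{s}\binom{\Delta s}{s-1} \le \frac{(e\Delta s)^{s-1}}{s\,(s-1)^{s-1}} \le (e\Delta)^{s-1}.
\]
Here I would either cite the standard formula or derive it from the generating-function or rotation argument for $\Delta$-ary trees. Each connected $S \ni v$ lifts injectively to such a subtree of the universal cover, because a spanning tree of $G[S]$ rooted at $v$ lifts to the cover. Summing over the $n$ choices of $v$ gives the bound $n(e\Delta)^{s}$, with room to spare.

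The main obstacle is the constant. The simple contour encoding only gives $4\Delta$, so the real step is the lifting to the $\Delta$-regular tree combined with the Fuss--Catalan count. If a weaker constant were acceptable for the later union bound, the $(4\Delta)^{s}$ contour argument alone would suffice.
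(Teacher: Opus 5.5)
Your proof is correct. The paper does not prove this lemma; it cites it as a classical exercise (Bollob\'as, \emph{The Art of Mathematics}, Exercise 45). Your argument is the standard proof behind that citation: root at $v$, pass to a spanning tree of $G[S]$, and dominate by rooted subtrees of an infinite tree counted by a Fuss--Catalan number. So it gives a self-contained justification where the paper has none.

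One claim needs correcting. The quantity $\frac{1}{s}\binom{\Delta s}{s-1}=\frac{1}{(\Delta-1)s+1}\binom{\Delta s}{s}$ counts $s$-vertex subtrees containing the root in the infinite $\Delta$-\emph{ary} tree, where every vertex has $\Delta$ children. It does not count them in the $\Delta$-regular tree. For $\Delta=2$ the regular tree is a bi-infinite path with only $s$ such subtrees, while the formula gives the Catalan number $C_s$. The fix is easy: the universal cover of $G$ has maximum degree at most $\Delta$, so it embeds root-to-root into the $\Delta$-ary tree. Hence the formula is still a valid upper bound, and the rest of your chain of inequalities goes through.

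Your reading of ``connected subgraph'' as a vertex set $S$ with $G[S]$ connected is the right one. It matches how the lemma is used in Lemma~\ref{lem:Bound-Connected-Partitions}, where the choice of edges is paid for separately by the factor $2^{3(s_1+\dots+s_{k-1})}$. As you note, even the cruder contour bound $(4\Delta)^{s-1}$ would suffice there.
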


In addition, we require the following result by Chv{\'a}tal \cite{chvatal1991almost}.

\begin{theorem}
    Let $\eps \in (0,1.88)$ be fixed. Then the $3$-core of $G(n,(1+\eps)/n)$ is empty, with high probability.
\end{theorem}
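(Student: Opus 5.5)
The plan is a first moment argument over candidate cores. If the $3$-core of $G \sim G(n,c/n)$, with $c=1+\eps<2.88$, is non-empty, then $G$ contains a set $S$ with $|S|=s\geq 4$ such that $G[S]$ has minimum degree at least $3$. In particular $G[S]$ contains a spanning subgraph $H$ with $\delta(H)\geq 3$ and some number $m\geq 3s/2$ of edges. I would bound the expected number of pairs $(S,H)$ of this kind and show that it is $o(1)$ after summing over $s$ and $m$. Markov's inequality then gives the statement.

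\textbf{Small sets.} For $s\leq \delta n$ with $\delta=\delta(c)>0$ small, I only use that $S$ spans at least $3s/2$ edges. The expected number of such sets is at most
\begin{align*}
\binom{n}{s}\binom{\binom{s}{2}}{3s/2}\left(\frac{c}{n}\right)^{3s/2}\leq \left(\frac{en}{s}\right)^{s}\left(\frac{e s c}{3n}\right)^{3s/2}=\left(C_c\left(\frac{s}{n}\right)^{1/2}\right)^{s}
\end{align*}
for some constant $C_c$. Summing over $4\leq s\leq \delta n$, the terms with $s\leq \log n$ contribute $O(n^{-2+o(1)})$ and the remaining terms form a geometrically small tail once $\delta$ is small. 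This step works for every constant $c$ and is routine.

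\textbf{Linear-size sets.} For $s=\alpha n$ with $\alpha\in[\delta,1]$, the edge count alone is too weak, so I count minimum-degree-$3$ graphs precisely. I bound the number of graphs on $s$ labelled vertices with $m$ edges and minimum degree at least $3$ through the configuration model:
\begin{align*}
N(s,m)\leq \frac{(2m)!}{m!\,2^m}\,[x^{2m}]\left(e^x-1-x-\tfrac{x^2}{2}\right)^{s}.
\end{align*}
I then apply the saddle point bound $[x^{2m}]F(x)^s\leq F(x)^s/x^{2m}$ for every $x>0$. With $f_3(x)=e^x-1-x-x^2/2$, $\beta=m/n$ and Stirling's formula, the expected count becomes $\exp(n\,\Psi_c(\alpha,\beta,x)+o(n))$, where
\begin{align*}
\Psi_c=H(\alpha)+\alpha\log f_3(x)-2\beta\log x+\beta\log(2\beta n)-\beta+\beta\log\frac{c}{n},
\end{align*}
and $H$ is the binary entropy. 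The factors of $n$ cancel. The remaining task is to show that, with $x$ chosen at the saddle point $x f_3'(x)/f_3(x)=2\beta/\alpha$, we have $\sup_{\alpha\in[\delta,1],\,\beta\geq 3\alpha/2}\Psi_c<0$ whenever $c<2.88$. There are only polynomially many pairs $(s,m)$ with $m=O(n)$, and larger values of $m$ are killed trivially. Hence the sum over $s$ and $m$ is $e^{-\Omega(n)}$.

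\textbf{Main obstacle.} I expect the hardest step to be the analysis of $\Psi_c$: a two-parameter optimisation that must be uniform over $\alpha$ and $\beta$ and that decides the constant $2.88$. My first attempt would be to optimise in $\beta$ explicitly, reducing to a one-variable function of $x$ (equivalently of $\alpha$). I would then check negativity analytically at the endpoints, namely as $\alpha\to 0$, where the small-set regime takes over, and at $\alpha=1$. For the interior I would use monotonicity or a rigorous numerical check on a fine grid, with Lipschitz control of $\Psi_c$. Since the theorem only requires $\eps<1.88$ and the true threshold is about $3.35$, a somewhat lossy bound suffices. If the bare count is not sharp enough, I would sharpen it by also including the factor $(1-p)^{\binom{s}{2}-m}$ for the absent edges inside $S$, which costs nothing in the computation.
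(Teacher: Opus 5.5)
The paper does not prove this statement; it quotes it from Chvátal \cite{chvatal1991almost}. Your reconstruction has a genuine gap at the step you flag as the main obstacle: the inequality $\sup\Psi_c<0$ for $c<2.88$ is false, not merely hard to verify. At $\alpha=1$, your sum is the expected number of spanning $H\subseteq G$ with $\delta(H)\ge 3$, and
\[
\sum_{H:\,\delta(H)\geq 3}p^{e(H)}=(1+p)^{\binom{n}{2}}\Pr\Bigl[\delta\bigl(G(n,\tfrac{p}{1+p})\bigr)\geq 3\Bigr]\geq \exp\Bigl(n\bigl(\tfrac c2+\log\Pr[\Po(c)\geq 3]\bigr)-o(n)\Bigr).
\]
The inequality is Harris's inequality applied to the increasing events $\{d(v)\geq 3\}$. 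The exponent is increasing in $c$ and positive for all $c\geq 2.2$; it is about $0.84$ at $c=2.88$. So the non-induced first moment is exponentially large on a large part of the range $\eps<1.88$.

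Inserting $(1-p)^{\binom{s}{2}-m}$ does not repair this. In $G(n,p)$ the induced count is still dominated by rare events in which $G[S]$ carries far more edges than typical. Take $c=2.88$ and $|S|=n/2$. Giving $G[S]$ about $1.92|S|$ edges costs about $e^{-0.68|S|}$, and minimum degree $3$ then costs about $e^{-0.48|S|}$. So the expected number of such $S$ is at least roughly $2^n e^{-0.58n}=e^{0.11n}$.

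What is missing is control of the total number of edges. Chvátal's result is stated in $G(n,m)$ with $m\le 1.44n$ edges, as his title says. There, the number of edges inside $S$ is hypergeometric, so a surplus inside $\binom{S}{2}$ forces a deficit outside it. This costs an extra factor $\exp(-n[A\log(A/A_0)-A+A_0])$, where $A$ and $A_0$ are the actual and expected numbers of edges outside $\binom{S}{2}$, divided by $n$. Your small-set bound, configuration-model count and saddle-point estimate all carry over to this setting.

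With this factor the induced count has a negative exponent, but only barely. By my estimate its maximum, attained near $|S|\approx 0.3n$, changes sign at roughly $m/n\approx 1.44$. So, contrary to your remark, there is no slack for a lossy bound: $2.88$ is essentially the limit of this first-moment method, well below the true core threshold $3.35$.

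Finally, pass back to $G(n,p)$ using two facts: having a non-empty $3$-core is a monotone property, and $e(G(n,p))$ is concentrated around $cn/2$.
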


We will also use the following the following folklore result.

\begin{theorem}
    Let $G$ be a graph of average degree $d$. Then $G$ has a subgraph with minimum degree at least $d/2$.
\end{theorem}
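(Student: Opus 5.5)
The approach is the standard greedy deletion argument, tracking the ratio of edges to vertices. Let $G$ have $n$ vertices and $m$ edges, so that $m = dn/2$, i.e.\ $e(G) = \frac{d}{2}|V(G)|$. If $d = 0$ there is nothing to prove, since any single vertex (or $G$ itself) has minimum degree at least $0$. So assume $d>0$.

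Starting from $H_0 = G$, I will define a sequence of induced subgraphs. As long as the current graph $H_i$ contains a vertex $v$ with $d_{H_i}(v) < d/2$, I delete $v$ to obtain $H_{i+1} = H_i - v$. I will maintain the invariant
\begin{align*}
e(H_i) \geq \frac{d}{2}|V(H_i)|,
\end{align*}
with strict inequality for every $i \geq 1$. The invariant holds with equality at $i=0$. For the inductive step, deleting $v$ removes fewer than $d/2$ edges and exactly one vertex, so
\begin{align*}
e(H_{i+1}) > e(H_i) - \frac{d}{2} \geq \frac{d}{2}\bigl(|V(H_i)| - 1\bigr) = \frac{d}{2}|V(H_{i+1})|.
\end{align*}

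The only point needing care is showing that the process stops at a nonempty graph. This is the (mild) main step. Suppose the process deleted every vertex. Then at some stage $|V(H_i)| = 1$, and this happens with $i \geq 1$ whenever $n \geq 2$. The invariant would then give $e(H_i) > d/2 > 0$, which is impossible for a single-vertex graph. If instead $n = 1$, then $d = 0$, which we have excluded. Hence the process terminates at some nonempty $H$.

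By the stopping rule, every vertex of $H$ has degree at least $d/2$ in $H$. Therefore $H$ is the required subgraph.
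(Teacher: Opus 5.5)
Your argument is correct: the invariant $e(H_i) \geq \frac{d}{2}|V(H_i)|$, which becomes strict once a vertex has been deleted, rules out the process deleting every vertex, and the stopping rule then gives minimum degree at least $d/2$. The paper states this as a folklore result without proof, and your greedy deletion argument is the standard proof it relies on.
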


From the two previous results, we immediately deduce the following lemma.

\begin{lemma}
\label{lem:Small-average-degreeSparseRegime}
Let $\eps \in (0,1.88)$ be fixed. Then every subgraph of $G(n,(1+\eps)/n)$ has average degree at most $6$, with high probability.
\end{lemma}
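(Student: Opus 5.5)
Lemma \ref{lem:Small-average-degreeSparseRegime} follows by combining Chv\'atal's theorem with the folklore result, arguing by contradiction on the high-probability event where the $3$-core is empty.

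Let $\mathcal{E}$ be the event that the $3$-core of $G \sim G(n,(1+\eps)/n)$ is empty. Since $\eps \in (0,1.88)$, Chv\'atal's theorem gives $\mathbb{P}(\mathcal{E}) \to 1$. We show deterministically that on $\mathcal{E}$ every subgraph $H \subseteq G$ has average degree at most $6$; the lemma then follows.

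Suppose that on $\mathcal{E}$ some subgraph $H$ has average degree $d > 6$. The folklore theorem gives a subgraph $H' \subseteq H$ with minimum degree at least $d/2 > 3$, so $H'$ is nonempty and every vertex of $H'$ has degree at least $3$ in $H'$. Degrees can only grow when passing to the supergraph $G[V(H')] \supseteq H'$, so $V(H')$ induces a subgraph of $G$ with minimum degree at least $3$. The $3$-core is the maximal subgraph of minimum degree at least $3$, so it contains $V(H')$ and is nonempty. This contradicts $\mathcal{E}$.

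There is no genuine obstacle. The only points to check are the translation from ``$H'$ has minimum degree $\geq 3$'' to ``the $3$-core is nonempty'', which holds because the $3$-core contains every subgraph of minimum degree at least $3$, and the fact that $H'$ is nonempty. In fact the argument gives the stronger bound that every subgraph has average degree below $6$, since average degree $6$ would already give a subgraph of minimum degree at least $3$.
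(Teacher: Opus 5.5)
Your proof is correct and is exactly the paper's argument: the paper just says the lemma follows immediately from Chv\'atal's theorem (the $3$-core is empty whp) combined with the folklore minimum-degree-subgraph result, and you have written out that contradiction carefully, including why $H'$ is nonempty and why it forces a nonempty $3$-core. Your closing remark that average degree exactly $6$ is also ruled out (given the folklore result as stated) is likewise valid.
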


Let $H$ be a graph, and let $S_1, \dots, S_k$ be a partition of $V(H)$. 
We define a \emph{super-block} the following way.
Suppose $S_k$ has the largest size amongst $S_1, \dots, S_k$. 
We construct the auxiliary graph $\mathcal{H}$ whose vertices are the blocks $S_i$ for $i \leq k-1$.
For distinct $i,j \in [k-1]$, the two blocks $S_i$ and $S_j$ are connected by an edge in $\mathcal{H}$ if $e_{H}(S_i,S_j) > 0$.
A \emph{super-block} is then defined as a connected component in this auxiliary graph $\mathcal{H}$. 
For integers $s_1,\dots, s_{k-1}$ and a graph $H$, we let $f(s_1,\dots,s_{k-1},H)$ be the number of connected partitions $S_1 \cup \dots \cup S_k$ of $V(H)$ such that for each $i \in [k-1]$, we have $|S_i|=s_i$, and the number of super-blocks is exactly 1. 
The main bound we establish in this section is the following.

\begin{lemma}
\label{lem:Bound-Connected-Partitions}
    Fix $\eps > 0$ sufficiently small, and let $G$ be sampled as $G(n,(1+\eps)/n)$. 
    Then with high probability, for every subgraph $G'$ of $G$ with maximum degree at most $100 \log \log n$, we have
    \begin{align*}
        f(s_1,\dots,s_{k-1}, G') \leq n(1000 \log \log n)^{2(s_1+\dots+s_{k-1})}.
    \end{align*}
\end{lemma}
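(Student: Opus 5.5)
The plan is to encode each connected partition counted by $f(s_1,\dots,s_{k-1},G')$ by a small amount of data, and then count the encodings. The largest block $S_k$ is determined as the complement of $S_1\cup\dots\cup S_{k-1}$, so it suffices to count the ways of choosing the blocks $S_1,\dots,S_{k-1}$. Set $s=s_1+\dots+s_{k-1}$ and $U=S_1\cup\dots\cup S_{k-1}$. Since there is exactly one super-block, the auxiliary graph $\mathcal{H}$ is connected. Each $S_i$ is connected in $G'$, and every edge of $\mathcal{H}$ comes from an edge of $G'$ between two blocks. Together these show that $G'[U]$ is connected. So the first step is to choose a connected vertex set $U$ of size $s$ in $G'$. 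By \Cref{lem:Number-connected-subgraphsSparseRegime}, with $\Delta\le 100\log\log n$, there are at most $n(e\cdot 100\log\log n)^{s}$ such sets. (The lemma counts connected subgraphs, but each connected vertex set spans a connected induced subgraph, so the same bound applies.)

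The second step is to count the ways of splitting a fixed connected $U$ into connected pieces $S_1,\dots,S_{k-1}$ of the prescribed sizes. Any such partition is determined by the set $F$ of edges of $G'[U]$ that join distinct blocks: the blocks are exactly the components of $G'[U]\setminus F$, because each block is connected. We then need to bound the number of admissible $F$. The crude bound $2^{e(G'[U])}$ is enough once we know $e(G'[U])=O(s)$. By \Cref{lem:Small-average-degreeSparseRegime}, with high probability every subgraph of $G$ has average degree at most $6$, so $e(G'[U])\le 3s$. This gives at most $2^{3s}=8^s$ choices of $F$.

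Combining the two steps, we get
\begin{align*}
f(s_1,\dots,s_{k-1},G')\le n(100e\log\log n)^{s}\cdot 8^{s}\le n(1000\log\log n)^{2s},
\end{align*}
where the last inequality holds for large $n$, and in fact with a lot of room. One point to check is that the ordering and labelling of the blocks is consistent with the sizes $s_i$. Labels are assigned by the prescribed sizes, and any ambiguity between blocks of equal size contributes at most a factor $(k-1)!$, which is bounded by $s^s$. That factor would spoil the bound, so it is safer to treat the labelled partition as determined by $F$ together with a choice of which component receives which label. To keep this under control I would instead encode the partition by $F$ and a root vertex for each block, written in a canonical order. A cleaner fix is to absorb the labelling into the edge-subset count: record for each vertex of $U$ a label from $\{1,\dots,k-1\}$ only implicitly, via a DFS order of $G'[U]$ that lists the blocks in the order they are first reached. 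This canonical ordering costs nothing extra. I expect this labelling/multiplicity issue to be the only delicate point; everything else is a direct application of \Cref{lem:Number-connected-subgraphsSparseRegime} and \Cref{lem:Small-average-degreeSparseRegime}, and the high-probability event is simply the one from \Cref{lem:Small-average-degreeSparseRegime}, which holds uniformly over all subgraphs $G'$.
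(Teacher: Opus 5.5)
Your argument is correct and has the same skeleton as the paper's proof. You first choose the connected union $U=S_1\cup\dots\cup S_{k-1}$ via \Cref{lem:Number-connected-subgraphsSparseRegime}, then use \Cref{lem:Small-average-degreeSparseRegime} to bound the number of edge subsets of $G'[U]$ by $2^{3s}$.

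The difference lies only in how the partition of $U$ is encoded. The paper records three things: a spanning tree $T$ of $G'[U]$ assembled from spanning trees of the blocks, a root $r$, and the set $\{r_i\}$ of DFS-minimal vertices of the blocks. Its injectivity, which the paper calls straightforward, needs the observation that each block is a subtree of $T$ with top vertex $r_i$, so every vertex lies in the block of its nearest ancestor in $\{r_i\}$. This encoding also costs an extra factor $s\cdot 2^{s}$. Your encoding by the cut set $F\subseteq E(G'[U])$ recovers the blocks as the components of $G'[U]\setminus F$. It is injective simply because each $G'[S_i]$ is connected, and it saves that factor, so it is the cleaner of the two.

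Your final paragraph on labelling should be replaced, because the proposed fix does not do what you claim. A canonical DFS ordering cannot ``absorb'' the labelling. Choosing one canonical labelling per unordered partition just means counting unordered partitions; it does not encode labelled ones.

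In fact, if $f$ counted labelled tuples the lemma would be false. The labelled count is $\prod_j m_j!$ times the unlabelled one, where $m_j$ is the number of blocks of size $j$. For example, when all $k-1$ blocks are singletons this factor is $s!$, which exceeds $n(1000\log\log n)^{2s}$ for large $s$ as soon as one admissible $U$ exists.

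The correct resolution is that $f$ counts partitions as unordered families of blocks. This is also all that the paper's injection establishes, since its image contains only the \emph{set} $\{r_i\}$. It is also all that the proof of \Cref{thm:mainSparseRegime} needs: the witness property is invariant under relabelling the blocks, and summing over ordered tuples $(s_1,\dots,s_{k-1})$ only overcounts. Under that reading, your first two steps already give the complete proof.
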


\begin{proof}
    First, by \Cref{lem:Small-average-degreeSparseRegime}, every subgraph of $G$ has average degree at most $6$, with high probability, and we assume that this event holds for the rest of the proof.
    Let $S_1 \cup \dots \cup S_k$ of $V(H)$ be a connected partition such that for each $i \in [k-1]$, we have $|S_i|=s_i$, and the number of super-blocks is exactly 1.
    We now consider function $\phi$ which is defined the following way.
First, we take a spanning tree $T$ of $G'[\cup_{ i \in [k-1]} S_i ]$ by joining spanning trees of the $G'[S_i]$.
We then take an arbitrary root $r$ of this spanning tree. 
This root defines an ordering of the tree by DFS. 
For each $S_i$, let $r_i$ be the vertex in $S_i$ with the lowest index.
We now define the image of $\phi$ to be this spanning tree $T$, together with the root $r$, and the set $\{ r_i : i \in [k-1]\} $.
It is straightforward to check that the map $\phi$ defined this way is injective. 
We now obtain the desired bound on $f(s_1,\dots,s_{k-1}, G')$ via a simple counting argument.
For $\cup_{ i \in [k-1]} S_i $ we have $n (100 e \log \log n)^{s_1+\dots+s_{k-1}}$ choices by Lemma \ref{lem:Number-connected-subgraphsSparseRegime}.
As any subgraph of $G'$ has average degree at most $6$, we have at most $2^{3(s_1+\dots+s_{k-1})}$ choices of subgraphs with vertex set $\cup_{ i \in [k-1]} S_i $, so in particular at most $2^{3(s_1+\dots+s_{k-1})}$ spanning trees, and $s_1+\dots+s_{k-1}$ choices for the root.
Finally, we have $\binom{s_1+\dots+s_{k-1}}{k-1} \leq 2^{s_1+\dots+s_{k-1}}$ choices for the set $\{ r_i : i \in [k-1]\} $.
It follows that
\begin{align*}
    f(s_1,\dots,s_{k-1}, G') &\leq  n (100 e \log \log n)^{s_1+\dots+s_{k-1}} (s_1+\dots+s_{k-1})2^{4(s_1+\dots+s_{k-1})} \nonumber \\
    &\leq n(1000 \log \log n)^{2(s_1+\dots+s_{k-1})}.
\end{align*}
\end{proof}

\section{Technical estimates for the proof of Theorem \ref{thm:mainSparseRegime}}
\label{sec:MainSparseRegime}

As explained in the outline in \Cref{sec:outline}, our proof of \Cref{thm:mainSparseRegime} will consist of a first moment method on a $(n, \eps, \gamma)$-good subgraph $G'$ of $G$, whose existence is guaranteed with high probability by Lemma \ref{lem:Structure2coreSparseRegime}.
By \Cref{lem:PropertyWitness}, it suffices to show that $G'$ does not admit any witness with high probability.
We take a random uniform permutation $\sigma$ on $V$, and it therefore suffices to show that
\begin{align*}
        \sum_k \sum_{\cup_{i \leq k} S_i=V(G')}  \mathbb{P}_{\sigma}(S_1 \cup \dots \cup S_k \text{ is a witness of }G'_{\sigma}) =o_{\eps}(1),
    \end{align*} 
    where the second sum is over each connected partition, and $G'_{\sigma}$ is the graph obtained from $G'$ by relabelling its vertex set according to $\sigma$.
    We may now apply \Cref{lem:Proba-witness} to show the above inequality. 
    However, due to the nature of the bound on $\mathbb{P}_{\sigma}(S_1 \cup \dots \cup S_k \text{ is a witness})$ given by Lemma \ref{lem:Proba-witness}, this will require us to give efficient lower bounds on the quantity $V'-C_2-(k-1)$ appearing in Lemma \ref{lem:Proba-witness}.
    The goal of this section is to provide such estimates in the form of the next two lemmas.

In the following, we fix $\eps \in (0,1)$, some constant $\gamma$ as given in Lemma \ref{lem:Structure2coreSparseRegime}, and we let $c_{\eps}=\gamma \eps$.
We fix a $(n,\eps, \gamma)$-good graph $G'$, and let $N$ be the number of vertices of $G'$.
We further fix a connected partition $S_1, \dots, S_k$ of $V(G')$ such that $S_1, \dots, S_{k-1}$ is a super-block.
For every $i \in [k-1]$, we let $s_i=|S_i|$.
Let $E'$ be the set of edges having endpoints in two different $S_i$, and suppose that the set $E'$ is spanning over $V'$ vertices, forming a set $\mathcal{C}_2$ of $C_2$ non-trivial connected components (those with more than one vertex). 
Finally, we set $C_1=k-1$.
Our two main technical estimates are the following.

\begin{lemma}
    \label{lem:IneqSuperblock}
    Suppose that $s_1+\dots +s_{k-1} \leq N/2$, and that at least one of $S_1, \dots, S_{k-1}$ contains a vertex of degree at least $3$. 
    Then the following hold: 
        \begin{itemize}
            \item $V'-C_2-C_1 \geq \frac{c_{\eps}(s_1+\dots+s_{k-1})}{400\log \log n}$,
            \item $V'-C_2-C_1 \geq 2$.
        \end{itemize}
    \end{lemma}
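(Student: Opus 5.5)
The plan is to rewrite $V'-C_2-C_1$ as a cycle-rank difference, and then lower bound it by a charging argument that uses the expansion of $G'$ for the first bullet and conditions 6 and 7 of goodness for the second. Throughout, write $U=S_1\cup\dots\cup S_{k-1}$, so $|U|=s_1+\dots+s_{k-1}\le N/2$.

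\textbf{Reformulation.} Let $T_i$ be a spanning tree of $G'[S_i]$ for each $i\in[k]$, and let $\Gamma=T_1\cup\dots\cup T_k\cup E'$. The graph $\Gamma$ is connected and spanning, since $G'$ is connected and every edge of $G'$ not in $E'$ joins two vertices of the same $S_i$. Hence its cycle rank is $\beta(\Gamma)=(N-k)+|E'|-N+1=|E'|-(k-1)$. The graph $(V',E')$ has cycle rank $\beta(E')=|E'|-V'+C_2$. Therefore
\[
V'-C_2-C_1=\beta(\Gamma)-\beta(E') .
\]
Equivalently, it counts the edges of a spanning forest $F$ of $E'$ beyond the $k-1$ edges needed to connect the blocks after contracting each $S_i$. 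Concretely, I would orient $F$ towards one root per component. Whenever a component of $E'$ meets $S_k$, I choose its root in $S_k$. Each non-root vertex of $V'$ then owns its parent edge, and $V'-C_2$ is the number of non-root vertices.

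\textbf{Charging and the first bullet.} Each block $S_i$ with $i<k$ has to pay $1$. I would show that each such block contains a non-root vertex of $V'$: by condition 6 we have $e(S_i,S_i^c)\ge 2$, and because the blocks form a single super-block, every $S_i$ is incident to $E'$. The surplus then comes from $S_k$. By condition 2, $e(U,U^c)\ge c_{\eps}\, d(U)\ge 2c_{\eps}|U|$, and all of these edges lie in $E'$. Since $\Delta(G')\le 100\log\log n$ by condition 5, at least $2c_{\eps}|U|/(100\log\log n)$ distinct vertices of $S_k$ lie in $V'$. Each component of $E'$ meeting $S_k$ loses only one vertex (its root) to the root count. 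The delicate point is to show that these boundary vertices in $S_k$ are not all absorbed as roots, and that the components of $E'$ lying inside $U$ do not cost more than they contribute. I expect to lose at most a factor of $4$ here, for example by noting that a component rooted in $S_k$ which contains $t$ vertices of $S_k$ contributes $t-1\ge t/2$ non-roots whenever $t\ge 2$, and separately handling components with $t=1$ via their $U$-side vertices. This yields $V'-C_2-C_1\ge c_{\eps}|U|/(400\log\log n)$.

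\textbf{Second bullet.} When $|U|$ is large, the first bullet already gives at least $2$. When $|U|$ is small (below about $\log n/(2\eps)$), condition 3 implies that $U$ together with a few neighbours induces a forest. Condition 7, applied to $U$ (which contains a vertex of degree at least $3$), gives $|N(U)|\ge 3$, and these neighbours lie in $S_k$. In a forest all cycle counts are trivial, so $\beta(E')=0$. Moreover, every cycle of $\Gamma$ passes through $S_k$ at least twice via distinct attachment points. With three distinct outside neighbours reached by disjoint branches, $\Gamma$ has cycle rank at least $2$ on the contracted picture, which gives $V'-C_2-C_1\ge 2$.

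\textbf{Main obstacle.} I expect the hardest step to be the accounting in the first bullet: ensuring that the roots of components of $E'$ lying entirely inside $U$ do not cancel the gain from the boundary vertices in $S_k$. I would first try to bound the number of such internal components by the number of $E'$-edges inside $U$, using condition 4 (no long bare paths) to relate this to $|U|$.
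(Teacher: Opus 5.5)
\textbf{There is a genuine gap in the first bullet.} What works: your identity $V'-C_2-C_1=\beta(\Gamma)-\beta(E')$ is correct. For small $|U|$ your second-bullet argument is also fine. Girth makes $G'[U]$ and $E'$ acyclic when $|U|<\log n/(4\eps)$. Contracting $T_k$ then leaves a tree on $U$ plus $e(U,S_k)\ge |N(U)|\ge 3$ edges to one vertex, so $\beta(\Gamma)=e(U,S_k)-1\ge 2$.

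The gap is the first bullet, which your second bullet also needs for large $|U|$. The accounting you postpone is the whole content of the lemma.
\begin{itemize}
\item Your assertion that every $S_i$, $i<k$, contains a non-root vertex does not follow from $e(S_i,S_i^c)\ge 2$, and it is false in general. A singleton block whose vertex is the root of a component of $E'$ lying inside $U$ contains none.
\item Nothing in your sketch prevents every component of $E'$ meeting $S_k$ from having $t=1$. In that case all of $V'\cap S_k$ is used up as roots and the expansion gain vanishes.
\item Your suggested remedy, bounding the number of internal components by $|E'[U]|$ via condition 4, cannot work quantitatively. Such a bound is of order $|U|$, whereas the gain from expansion is only of order $c_\eps|U|/\log\log n$.
\end{itemize}
What is needed is that every component of $E'$ pays for its own root.

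The missing ingredient is two structural facts, which the paper obtains from one girth--expansion--degree argument (conditions 3, 2 and 5). Write $V_1$ for the $V'$-vertices in blocks $S_i$ ($i<k$) containing a single $V'$-vertex, $V_2$ for the $V'$-vertices in the other blocks $S_i$ ($i<k$), and $V_{ext}=V'\cap S_k$.

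(a) Every block contributing to $V_1$ is a singleton. Otherwise $G'[S_i]$ is connected with all but one vertex of degree at least $2$, so it contains a cycle and $|S_i|\ge \log n/(2\eps)$. Expansion then gives at least about $\gamma\log n$ edges of $E'$, all at the unique $V'$-vertex of $S_i$. This contradicts $\Delta(G')\le 100\log\log n$.

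(b) Every component $C$ of $E'$ contains at least two vertices of $V_2\cup V_{ext}$. Otherwise its remaining vertices are singleton blocks, all of whose edges lie in $E'$. Then $C$ contains a cycle, and the same contradiction arises at the unique exceptional vertex.

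To finish: by (a), $C_1\le |V_1|+|V_2|/2$, hence $V'-C_1-C_2\ge \sum_C (|C\cap V_2|/2+|C\cap V_{ext}|-1)$. By (b), each summand is at least $|C\cap V_{ext}|/2$, so $V'-C_1-C_2\ge |V_{ext}|/2$. The first bullet then follows from your expansion bound on $|V_{ext}|$. The second follows from $|V_{ext}|\ge 3$ (condition 7) and integrality. In your charging language, this means weighting the $V'$-vertices of non-singleton blocks by $1/2$ rather than demanding a non-root in each block.
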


 \begin{lemma}
    \label{lem:CountingEdgesRevealedCase2}
        There exists a constant $\eta =\eta(\eps) >0$ such that the following holds for any sufficiently small $\eps$. 
        Suppose that $\sum_{i=1}^k E(G'[S_i]) \geq N/2$, and that $s_1+\dots+s_{k-1} \geq N/2$.
        Then 
        \begin{align*}
            V'-C_1-C_2 \geq \frac{\eta N}{(\log \log N)^2}.
        \end{align*}
    \end{lemma}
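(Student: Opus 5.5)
The plan is to rewrite $V'-C_1-C_2$ as a sum of local contributions over the blocks, and to lower-bound it by combining the two hypotheses with the properties of an $(n,\eps,\gamma)$-good graph. Using $C_2\le V'/2$ (every non-trivial component has at least two vertices) and $C_1=k-1$, we have
\begin{align*}
V'-C_1-C_2 \;\ge\; \tfrac12 V' - (k-1) \;=\; \sum_{i\le k-1}\Bigl(\tfrac12|S_i\cap V'|-1\Bigr)+\tfrac12|S_k\cap V'| .
\end{align*}
This crude form is too lossy when there are many singleton blocks, since each singleton contributes $-\tfrac12$. We therefore treat singleton blocks separately and aim for the sharper form
\begin{align*}
V'-C_1-C_2 \;\ge\; \sum_{i\le k-1}\bigl(|S_i\cap V'|-1\bigr)+|S_k\cap V'| - C_2 .
\end{align*}
Here every term $|S_i\cap V'|-1$ is nonnegative, because $S_1,\dots,S_{k-1}$ form a single super-block, so each of them meets $V'$. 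It then remains to show that the positive part dominates $C_2$ by a margin of order $N/(\log\log N)^2$.

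\emph{Step 1 (blocks with a single boundary vertex).} Let $S_i$ be a block with $|S_i\cap V'|=1$, say $S_i\cap V'=\{v\}$, and suppose $|S_i|\ge 2$. Then all edges leaving $S_i\setminus\{v\}$ go to $v$. Since $\delta(G')\ge 2$ and $G'[S_i]$ is connected, if $S_i\setminus\{v\}$ induces a tree it must send at least two edges to $v$, which creates a cycle of length at most $|S_i|+1$. By condition 3 (no cycles of length at most $\log n/(2\eps)$) such a block has size at least $\log n/(2\eps)-1$. Hence there are at most $O(\eps N/\log n)$ such blocks, which is negligible. Every other block is either a singleton, or contributes at least $\tfrac12|S_i\cap V'|$ to the sum because $|S_i\cap V'|\ge2$.

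\emph{Step 2 (using the hypotheses).} The hypothesis $\sum_i e(G'[S_i])\ge N/2$ forces a large portion of the vertices to lie in non-singleton blocks. Indeed, singleton blocks carry no internal edges, and $\Delta(G')\le 100\log\log n$, so the non-singleton blocks cover at least $N/(200\log\log n)$ vertices. Combining this with $s_1+\dots+s_{k-1}\ge N/2$ and $S_k$ being the largest block, I will extract a union $U$ of blocks with $N/4\lesssim |U|\le N/2$; the size is controlled because each block has size at most $|S_k|$. The $\gamma\eps$-expansion (condition 2) then gives $e(U,U^c)\ge c_\eps N/4$. Each vertex meets at most $\Delta$ edges, so $|V'|\ge c_\eps N/(400\log\log n)$.

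\emph{Step 3 (charging $C_2$; the main obstacle).} The difficult point is to show that $C_2$ does not eat up this gain, i.e. that $V'$ exceeds $C_2$ plus the number of singleton blocks by a $(\log\log N)^{-2}$ fraction of $N$. My first attempt is a charging argument along components of the graph spanned by $E'$. In a component all of whose vertices are singleton blocks, the component is a connected subgraph of $G'$ of minimum degree at least $2$. Using girth together with condition 6, either such a component is large or it reaches a non-singleton block, in which case it shares a boundary vertex counted positively in Step 1. A component containing at least one vertex from a non-singleton block is charged to that block. Because of the maximum degree bound, each block boundary vertex is charged by at most $\Delta\le 100\log\log n$ components, which costs the second factor of $\log\log N$. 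Together with Step 2 this yields $V'-C_1-C_2\ge \eta N/(\log\log N)^2$, with $\eta$ depending on $\gamma\eps$ only. I expect the delicate part to be making this charging lossless enough in the regime where singleton blocks dominate the vertex count but the internal-edge hypothesis forces the remaining blocks to carry the required surplus.
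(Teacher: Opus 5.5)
Your proposal has a genuine gap in Step 3, which is where the whole content of the lemma lies. The charging you sketch there cannot produce a positive margin.

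Fix one representative border vertex in each block and let $V_{ext}$ be the remaining vertices of $V'$. Then $V'-C_1-C_2\geq |V_{ext}|-C_2=\sum_{C\in\mathcal{C}_2}(|C\cap V_{ext}|-1)$. So an $E'$-component containing exactly one vertex of $V_{ext}$ contributes $0$, and one containing none contributes $-1$.

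Every vertex of $V'$ lies in exactly one such component, so the multiplicity $\Delta$ you budget for never arises. The real problem is that charging each component to a border vertex of a non-singleton block only yields $C_2\leq\sum_{s_i\geq 2}|S_i\cap V'|$. This exceeds your positive part $\sum_{i\leq k-1,\,s_i\geq 2}(|S_i\cap V'|-1)+|S_k\cap V'|$, and the resulting lower bound on $V'-C_1-C_2$ is $-|\{i\leq k-1: s_i\geq 2\}|$, that is, negative.

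This is not a bookkeeping issue. Cut a bare path of $G'$ into consecutive arcs of at least two vertices. Each arc has exactly two border vertices, while each cut edge is its own $E'$-component, so the path contributes $O(1)$ however many arcs it is cut into. Thus the quantity you bound in Steps 1--2 can be of order $N$, while $V'-C_1-C_2$ is governed only by the number of bare paths and branch vertices. The surplus must come from the branching structure of $G'$ (vertices of degree at least $3$, no long bare paths, girth), which your argument does not exploit.

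This is what the paper's proof supplies:
\begin{itemize}
\item Representatives are chosen, when possible, among vertices incident to at least two edges of $E'$.
\item Blocks are classified as type $0$, $I$ (inside a bare path), $II$ (at least three border vertices, each incident to one edge of $E'$) or $III$.
\item Blocks of type $0$ or $I$ on a common bare path are merged, which leaves $V'-C_1-C_2$ unchanged.
\item Claims~\ref{cl:Classification1Vext} and~\ref{cl:ClassificationNoVext} use the girth to show that a component with at most one vertex of $V_{ext}$ contains either at least $\log n$ representatives of type $0$ or $III$, or prescribed combinations of type $I$/$II$ representatives whose blocks carry further $V_{ext}$-vertices.
\item The weighted count \eqref{eq:BoundFinalManyBlocks} handles the case where $|E_{single}(V_{ext},V_{rep})|$ or the number of $V_{ext}$-free components is large. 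Otherwise many $V_{ext}$-vertices lie in components containing at least two of them.
\end{itemize}

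Separately, Step 2 bounds the wrong quantity. A lower bound on $|V'|$ is useless, since a singleton block adds $1$ to both $V'$ and $C_1$. What is needed is a bound on the extra border vertices of non-singleton blocks. You get this by applying expansion to each such block, which is legitimate because $s_1+\dots+s_{k-1}\geq N/2$ forces every block to have size at most $N/2$.

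Your coverage bound $N/(200\log\log n)$ also loses a factor $\log\log n$. The paper avoids this via Lemma~\ref{lem:Small-average-degreeSparseRegime}, which gives $|\bigcup_{s_i\geq 2}S_i|\geq N/12$. That saving is needed, because passing from edges of $E'$ to vertices of $V_{ext}$ at the end costs another factor $\Delta$.
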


We now proceed to the proofs of \Cref{lem:IneqSuperblock} and \Cref{lem:CountingEdgesRevealedCase2}.
\begin{proof}[Proof of \Cref{lem:IneqSuperblock}]
    We start by proving the first inequality.
    Let $\mathcal{V}$ be the set of vertices that are the endpoints of at least one edge in $E'$, and note that $|\mathcal{V}|=V'$.
    Let $\tilde{S}_1$ be the set of $S_i$ for $i \in [k-1]$ that contains exactly one vertex in $\mathcal{V}$, and let $\tilde{S}_2$ be the set of $S_i$ for $i \in [k-1]$ with at least two vertices in $\mathcal{V}$.
    Note that if $S_i$ is in $\tilde{S}_1$, then the block $S_i$ is reduced to a single point. Indeed, if the block $S_i$ was not reduced to a single point, then $S_i$ contains a cycle, and thus $|S_i| \geq (\log n)/(2\eps)$. But then $e(S_i,S_i^c) \geq \gamma \eps (\log n)/(2\eps)$, and since the degree of any vertex is at most $100 \log \log n$, we obtain a contradiction.
    Let $V_1$ be the set of vertices of $\mathcal{V}$ that are part of a block in $\tilde{S}_1$, and let $V_2$ be the set of vertices of $\mathcal{V}$ that are part of a block in $\tilde{S}_2$.
    Define $V_{int}=\mathcal{V} \cap (\cup_{i \in [k-1]} S_i)$ and $V_{ext}=\mathcal{V} \cap S_k$. 
    Notice that $V'=|V_{int}|+|V_{ext}|$, that $|V_{int}| = |V_1|+|V_2|$ and that $C_1 \leq |V_1|+|V_2|/2$. Thus we have
    \begin{align}
    \label{eq:V'C1C2lem31}
        V'-C_1-C_2 \geq |V_{ext}|+ |V_2|/2-C_2.
    \end{align}
    We claim the following.
    \begin{claim}
        At least one of the following holds for every component $C$ in $\mathcal{C}_2$: 
    \begin{itemize}
        \item $C$ contains at least two vertices in $V_2$.
        \item $C$ contains at least one vertex in $V_2$ and at least one vertex in $V_{ext}$.
        \item $C$ contains at least two vertices in $V_{ext}$.
    \end{itemize}
    \end{claim}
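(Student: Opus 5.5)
The plan is to argue that every component $C \in \mathcal{C}_2$ has at least two vertices in $V_2 \cup V_{ext}$, i.e. at most one of its vertices lies in $V_1$ (counted appropriately). This gives exactly the trichotomy of the claim.

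The proof is by contradiction. Suppose some $C$ has at most one vertex in $V_2 \cup V_{ext}$, so all other vertices of $C$ are in $V_1$. Recall from the preceding paragraph that blocks in $\tilde S_1$ are singletons. Hence every vertex $x \in V_1$ is itself a whole block $\{x\}$, and all its $G'$-edges leave its block, so all of them lie in $E'$. Consequently, for each $x \in C \cap V_1$, all $G'$-neighbours of $x$ lie in $C$.

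First consider the case where $C \subseteq V_1$. Then $C$ is a union of singleton blocks, and since all their edges lie in $E'$, $C$ is closed under taking neighbours. By connectivity of $G'$, $C = V(G')$. This is impossible, because $S_k$ is nonempty and not in $V_1$.

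Next consider the case where $C$ has exactly one vertex $w \in V_2 \cup V_{ext}$. Let $S = C \setminus \{w\}$. Every $x \in S$ has all its neighbours in $C$, so every edge leaving $S$ goes to $w$. We then use property 6 of a good graph, $e(S,S^c) \geq 2$, together with the absence of short cycles (property 3).

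The key step is to show that $S$ is small. Every vertex of $S$ has all its neighbours in $S \cup \{w\}$, so we expect $S$ to be a union of $G'$-components once $w$ is removed. If $|S| \leq \log n/(2\eps)$, then $S \cup \{w\}$ is a forest. Since $S$ has minimum degree at least $2$ in $S\cup\{w\}$, the tree-counting argument from condition 7 applies: following paths out of $S$ gives at least two distinct exits, and in a forest two edges from $S$ to the same vertex $w$ close a cycle. Precisely, take two edges $xw, x'w$ with $x,x' \in S$. Then $S \cup \{w\}$ contains a cycle through $w$ if $x, x'$ lie in one component of $G'[S]$. Otherwise, each component of $G'[S]$ sends at least two edges to $w$ by property 6 applied to that component, giving a short cycle, a contradiction.

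If instead $|S|$ is large, then $e(S, S^c) \leq \deg(w) \leq 100\log\log n$. Meanwhile the expansion $\gamma\eps$ applied to the smaller side forces $e(S,S^c) \geq \gamma \eps \min(|S|,|S^c|)$. The side $S^c \supseteq S_k$ need not be small, but we apply expansion to whichever of $S$, $S^c$ has size at most $N/2$. We have $|S| \leq s_1+\dots+s_{k-1} \leq N/2$, so expansion applies to $S$ itself. This gives $|S| \leq 100\log\log n/(\gamma\eps) < \log n/(2\eps)$, which returns us to the short case.

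The main obstacle is the bookkeeping in the one-exceptional-vertex case. Specifically, one must ensure that the component of $G'[S]$ really sends two edges to $w$ (property 6) and that the short-cycle bound covers $S \cup \{w\}$. I would settle this first via the size bound, and then the cycle argument.
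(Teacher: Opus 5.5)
Your proof is correct and is essentially the paper's argument. Both rely on three facts: vertices of $V_1$ are singleton blocks, so $C\setminus\{w\}$ can only exit through $w$; expansion applies to $C\setminus\{w\}$ (valid because it lies in $S_1\cup\dots\cup S_{k-1}$, of size at most $N/2$), and this is played against $\deg(w)\le 100\log\log n$; and the girth condition. The only difference is the order: you first bound $|S|$ and then produce a short cycle via property 6, whereas the paper gets a cycle from $\delta(G')\ge 2$, deduces $|C|\ge \log n/(2\eps)$ from girth, and then contradicts the degree bound at $w$.

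Two slips are harmless. Your opening paraphrase (``at most one of its vertices lies in $V_1$'') is not equivalent to the claim, though you then prove the right statement. The forest step needs $|S|+1\le \log n/(2\eps)$ rather than $|S|\le \log n/(2\eps)$, which your bound $|S|=O(\log\log n)$ supplies with plenty of room.
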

    \begin{proof}
    To prove this claim, we suppose that $C$ contains exactly one vertex in $V_2 \cup V_{ext}$, denoted by $u$.
    Then $C$ must contain a cycle (as all elements of $C$, except $u$, are in $V_1$, and thus have degree at least $2$ in $C$).
    Thus $|C| \geq \frac{\log n}{2\eps}$, and therefore letting $C^{1}=C \cap V_1$, we have that $|C^{1}| \geq \frac{\log n}{2\eps}-1$, and consequently $e(C^{1},(C^{1})^c) \geq c_{\eps}\log n$ for $n$ large enough, giving a contradiction, as $E(C^{1},(C^{1})^c) \subseteq E'$, and thus $u$ would have degree at least $c_{\eps}\log n/2$.
    The case where $C$ contains no vertex in $V_2 \cup V_{ext}$ is treated the same way.
    \end{proof}
    Let $\gamma_1$, $\gamma_2$, and $\gamma_3$ be the number of each type above, so that $C_2=\gamma_1+\gamma_2+\gamma_3$. 
    We then have $|V_2| \geq 2\gamma_1+\gamma_2$, and $|V_{ext}| \geq \gamma_2+2\gamma_3$.
    Therefore, plugging this into \eqref{eq:V'C1C2lem31} gives 
    \begin{align}
    \label{eq:V'C1C2gammas}
        V'-C_1-C_2 \geq |V_{ext}|+ |V_2|/2-C_2 &\geq \gamma_2/2+\gamma_3 \\
        &\geq (\gamma_2+\gamma_3)/2. \nonumber 
    \end{align}
    Therefore, if $\gamma_2+\gamma_3 \geq \frac{c_{\eps}(s_1+\dots+s_{k-1})}{200\log \log n}$, then we are done. 
    If not, we simply replace the inequality $|V_{ext}| \geq \gamma_2+2\gamma_3$ by $|V_{ext}| \geq \frac{c_{\eps}(s_1+\dots+s_{k-1})}{100 \log \log n}$ which holds by Lemma \ref{lem:Structure2coreSparseRegime}, and get
    \begin{align*}
         V'-C_1-C_2 &\geq \frac{c_{\eps}(s_1+\dots+s_{k-1})}{100 \log \log n}- \gamma_2-\gamma_3 \geq \frac{c_{\eps}(s_1+\dots+s_{k-1})}{200 \log \log n},
    \end{align*}
    as desired. \\
     For the second inequality, note that $|V_{ext}| \geq 3$ by point 7) of Lemma \ref{lem:Structure2coreSparseRegime}. Therefore we have 
    \begin{align}
    \label{eq:V'C1C2gammascasedeg3}
        V'-C_1-C_2 \geq  |V_{ext}|+ |V_2|/2-C_2 \geq 3-\gamma_2/2-\gamma_3.
    \end{align}
    As $V'-C_1-C_2$ is an integer, adding \eqref{eq:V'C1C2gammas} and \eqref{eq:V'C1C2gammascasedeg3} gives $V'-C_1-C_2 \geq 2$, as wanted.
    \end{proof}



    \begin{proof}[Proof of \Cref{lem:CountingEdgesRevealedCase2}]
        Let $\mathcal{V}$ be the set of vertices that are the endpoints of at least one edge in $E'$, and note that $|\mathcal{V}|=V'$.
        For a set $S \subseteq V(G')$, the \emph{border} of $S$ is the set of vertices $v \in S$ such that $d(v) \neq d_S(v)$.
        Let $\mathcal{C}_1$ be the set of components of $G' \setminus E'$ and let $\mathcal{C}_2$ be the set of components of the graph induced by $E'$, such that $|\mathcal{C}_1|=C_1$ and $|\mathcal{C}_2|=C_2$.
        For each component in $\mathcal{C}_1$, we choose a vertex from its border to be its representative.
        If possible, we choose a vertex incident to at least two edges in $E'$, if not, we choose this vertex in an arbitrary manner.
        The \emph{interior} of a bare path consists of all the vertices of degree exactly $2$ of this bare path.
        The components in $\mathcal{C}_1$ are classified as follows:
        \begin{itemize}
            \item Type $0$: A component consisting of a single vertex.
            \item Type $I$: A component contained in the interior of a bare path of $G'$.
            \item Type $II$: A component with at least three boundary vertices, all of which having degree 1 in the graph induced by $E'$.
            \item Type $III$: Any component that does not fit into the previous categories.
        \end{itemize}
        Let $V_{rep}$ denote the set of representative vertices, and let $V_{ext}=\mathcal{V} \setminus V_{rep}$.
        As $V'-C_1-C_2= |V_{ext}| - C_2$, to prove the desired result, it suffices to show that "most" vertices of $V_{ext}$ have another vertex of $V_{ext}$ in their connected component in $\mathcal{C}_2$.
        We prove this the following way.
        Let $R_0$, $R_{I}$, $R_{II}$, and $R_{III}$ be respectively the set of components of type $0$, $I$, $II$, and $III$ in $\mathcal{C}_1$.
        Let $V_{ext,I}$, $V_{ext,II}$ and $V_{ext,III}$ be the set of vertices from $V_{ext}$ that are in a component of type $I$, $II$ and $III$, respectively.
        Similarly, let $V_{rep,0}$, $V_{rep,I}$, $V_{rep,II}$, and $V_{rep,III}$ be the corresponding subsets of $V_{rep}$.
        If a bare path in $G'$ contains two components from $R_0 \cup R_{I}$, then we merge them by including all vertices and edges in between them in this bare path.
        Note that this merging process does not change the value of the expression $V'-C_1-C_2$, that $s_1+\dots+s_{k-1} \geq N/2$, and that 
        \begin{align}
        \label{eq:BoundSumsi}
        \sum_{i=1}^k |E(G'[S_i])| \geq N/2.
        \end{align}
        From the outcome of this merging process we note the following observation.
        \begin{observation}
        \label{obs:Merging}
            If an edge $e=vw$ belongs to $E'$ and $v$ is on the border of a component belonging to $V_{I}$, then $w$ is not in $V_{I}$, and additionally, if $w$ is in a component belonging to $V_{0}$, then $w$ has degree at least $3$.
        \end{observation}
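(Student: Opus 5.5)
The plan is to argue directly from the definitions and the outcome of the merging step; this is a purely local statement about one edge of $E'$. Let $e=vw\in E'$, where $v$ lies on the border of a component $K\in R_{I}$. By the definition of type $I$, $K$ is contained in the interior of some maximal bare path $P$ of $G'$. Hence every vertex of $K$, and in particular $v$, has degree exactly $2$ in $G'$. Both edges at $v$ are therefore edges of $P$, and so $e$ is an edge of $P$. Consequently $w$ is a vertex of $P$: either an interior vertex of degree $2$, or one of the two endpoints of $P$.

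First I would show that $w$ cannot lie in a type $I$ component. Suppose $w\in K'\in R_I$. Since $e\in E'$, we have $K'\neq K$. Then $w$ has degree $2$, so $w$ is an interior vertex of $P$. Because $P$ is a maximal bare path, the only maximal bare path whose interior contains $w$ is $P$ itself, so $K'$ is also contained in the interior of $P$. Thus $P$ contains two distinct components of $R_0\cup R_I$. This is exactly the situation removed by the merging step, since merging absorbs everything between two such components on a common bare path into a single component. That is a contradiction.

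Next I would treat the case where $w$ forms a type $0$ component $\{w\}$. If $w$ had degree $2$, it would be an interior vertex of $P$, and again $P$ would contain the two components $K$ and $\{w\}$ of $R_0\cup R_I$, contradicting the merging. Since $\delta(G')\ge 2$ by the definition of an $(n,\eps,\gamma)$-good graph, $w$ must have degree at least $3$; indeed $w$ is then an endpoint of $P$.

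The only point needing care, and the one I regard as the main obstacle, is that the classification of components must be read after merging is complete. I would make sure of three things. First, merging is iterated until no maximal bare path contains two components of $R_0\cup R_I$; each merge strictly decreases the number of components, so this terminates. Second, a merged component is still contained in the interior of the same bare path, so it is still of type $I$ and is not misclassified as type $II$ or $III$. Third, merging only removes edges of $E'$ lying inside $P$, so the edge $e$ under consideration is one that survives in the final $E'$. With these points in place, the two contradictions above prove the observation.
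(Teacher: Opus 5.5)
Your argument is correct and is the same reasoning the paper relies on. The paper gives no separate proof and only says the observation follows from the merging step; your argument makes this explicit: both edges at the degree-$2$ vertex $v$ lie on its unique maximal bare path $P$, so if $w$ had degree $2$ and belonged to a component of $R_0\cup R_I$, then $P$ would contain two such components in its interior, which the merging rules out. Your point that the types must be read after merging, with merged components still of type $I$, is the right detail to check.
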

        We also make the following observation, which is straightforward from the definition of the type of a component in $\mathcal{C}_1$.
        \begin{observation}
        \label{obs:DegreeVrep}
            Let $v \in V_{rep}$, and suppose $v \in C$, where $C \in \mathcal{C}_2$.
            Then $d_C(v) \geq 2$ if $v \in V_{rep,0} \cup V_{rep,III}$, and $d_C(v) =1$ if $v \in V_{rep,I} \cup V_{rep,II}$.
        \end{observation}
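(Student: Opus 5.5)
The plan is to unwind the definitions, using one remark throughout: since $C$ is the component of the graph spanned by $E'$ that contains $v$, every edge of $E'$ at $v$ lies in $C$. Hence $d_C(v)$ is exactly the number of edges of $E'$ incident to $v$. Recall also that a representative lies on the border of its component of $\mathcal{C}_1$, so $d_C(v)\geq 1$ always. I would treat the four types separately, read in the order listed, so that a component is of type $I$ only if it is not of type $0$, and so on.

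\emph{Type $0$.} The component is $\{v\}$, so no edge of $G'\setminus E'$ meets $v$. Thus every edge of $G'$ at $v$ is in $E'$, and $d_C(v)=d_{G'}(v)\geq \delta(G')\geq 2$ by property 5 of a $(n,\eps,\gamma)$-good graph.

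\emph{Type $I$.} The component is a connected set of at least two interior vertices of a bare path, hence a subpath whose vertices all have degree $2$ in $G'$. This is still true after the merging step, since merged components stay inside the bare path. A border vertex $v$ is an endpoint of this subpath. It has exactly one edge inside the component, so its other edge is its unique edge in $E'$, giving $d_C(v)=1$.

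\emph{Type $II$.} By definition every boundary vertex has degree $1$ in the graph induced by $E'$. In particular the representative does, so $d_C(v)=1$.

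\emph{Type $III$.} Since the rule prefers a border vertex incident to at least two edges of $E'$, it suffices to show that such a vertex exists. Suppose not. Then every border vertex has exactly one $E'$-edge, and since the component $S$ is not of type $II$, it has at most two border vertices. Hence $e(S,S^c)\leq 2$ and $|N_{G'}(S)|\leq 2$.
\begin{itemize}
\item If $S$ contains no vertex of degree at least $3$, then $S$ is a connected set of degree-$2$ vertices. It cannot be a whole cycle, because $G'$ is a connected expander containing vertices of degree at least $3$. So $S$ lies in the interior of a bare path, i.e.\ $S$ is of type $I$ (or $0$), a contradiction.
\item If $S$ contains a vertex of degree at least $3$ and $|S|\leq N/2$, then property 7 gives $|N_{G'}(S)|\geq 3$, a contradiction.
\item If $|S|>N/2$ and $S^c$ contains a vertex of degree at least $3$, apply property 7 to $S^c$. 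Its neighbourhood lies among the at most two $S$-endpoints of the edges leaving $S$, a contradiction again.
\end{itemize}

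The main obstacle is the remaining case: $|S|>N/2$ and $S^c$ consists only of degree-$2$ vertices, so that $S^c$ is a union of bare-path pieces. The expansion bound (property 2) alone does not exclude this when the pieces are short. Here I would argue as follows. Such pieces of $S^c$ are themselves components of $\mathcal{C}_1$ of type $0$ or $I$ lying on bare paths. The merging step, which fused any two type $0$/$I$ components sharing a bare path, means such a configuration would consist of a single piece whose two $E'$-edges both go to $S$. I would then try to rule this out, or show it is consistent with the observation, using Observation~\ref{obs:Merging} together with property 6. If this fails, the fallback is to show the case is excluded by the standing hypotheses of Lemma~\ref{lem:CountingEdgesRevealedCase2}, namely $s_1+\dots+s_{k-1}\geq N/2$ with $S_k$ the largest block.
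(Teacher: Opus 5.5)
Your handling of types $0$, $I$ and $II$ is correct. For type $III$ your reduction is also the right one. If no border vertex has two $E'$-edges, then $S$ (not being of type $II$) has at most two border vertices, so $|N_{G'}(S)|\le e(S,S^c)\le 2$. Your cases ``no vertex of degree at least $3$'' and ``$|S|\le N/2$ with a vertex of degree at least $3$'' (property 7) then give contradictions. The paper only asserts that the observation is straightforward from the definitions of the types. Your argument usefully shows that the type $III$ part is not purely definitional.

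For the case you call the main obstacle, your first idea cannot work. Take a bare path $u_0u_1\dots u_L$ of $G'$ with $L\ge 2$. Split its interior into blocks, which are then of type $0$ or $I$, and let $S=V(G')\setminus\{u_1,\dots,u_{L-1}\}$ be one block (when $G'[S]$ is connected).
\begin{itemize}
\item $S$ is of type $III$.
\item Its only border vertices $u_0,u_L$ each have exactly one $E'$-edge, so its representative has $d_C(v)=1$.
\item Nothing in property 6 or Observation~\ref{obs:Merging} excludes this configuration.
\end{itemize}
So the statement genuinely depends on a size restriction, and only your fallback closes the case.

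The fallback settles it in one line, which you should state rather than defer. $S_k$ is the largest block, and $|S_k|=N-(s_1+\dots+s_{k-1})\le N/2$ by the hypothesis of Lemma~\ref{lem:CountingEdgesRevealedCase2}. Hence every block has at most $N/2$ vertices. Type $III$ components are original blocks, since merging only fuses type $0$/$I$ pieces inside a single bare path. So $|S|\le N/2$ always, and both of your cases with $|S|>N/2$ are vacuous.

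One minor point in your first case. A path of degree-$2$ vertices whose two outside neighbours coincide is not in the interior of any bare path, so it must be excluded separately. It would close a cycle, so by property 3 it has roughly at least $\log n/(2\eps)$ vertices. Together with its attachment vertex it then forms a bare path of that length, contradicting property 4.
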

        We now claim the following.
        \begin{claim}
        \label{cl:Classification1Vext}
            For every connected component $C$ in $\mathcal{C}_2$ which contains exactly one vertex $v_{ext}$ of $V_{ext}$, at least one the following holds.
        \begin{itemize}
            \item $v_{ext}$ is of type $I$, and $C$ contains at most one edge between $v_{ext}$ and $V_{rep}$, and at least one vertex from $V_{rep,II}$.
            \item $v_{ext}$ is of type $I$, and $C$ contains at most one edge between $v_{ext}$ and $V_{rep}$, and at least one vertex from $V_{rep,0}$ and two vertices from $V_{rep,I}$.
            \item $v_{ext}$ is of type $I$, and $C$ contains at most one edge between $v_{ext}$ and $V_{rep}$, and at least one vertex from $V_{rep,III}$ and one vertex from $V_{rep,I}$.
            \item $v_{ext}$ is of type $I$, and $C$ contains at most one edge between $v_{ext}$ and $V_{rep}$, and at least $\log n$ vertices from $V_{rep,0} \cup V_{rep,III}$.
            \item $v_{ext}$ is of type $II$, and $C$ contains at most one edge between $v_{ext}$ and $V_{rep}$, and at least one vertex from $V_{rep,I}$.
            \item $v_{ext}$ is of type $II$, and $C$ contains at most one edge between $v_{ext}$ and $V_{rep}$, and at least one vertex from $V_{rep,II}$.
            \item $v_{ext}$ is of type $II$, and $C$ contains at most one edge between $v_{ext}$ and $V_{rep}$, and at least $\log n$ vertices from $V_{rep,0} \cup V_{rep,III}$.
            \item $v_{ext}$ is of type $III$, there is $e \geq 1$ edges between $v_{ext}$ and $V_{rep}$, and there exists $r$ such that $C$ contains $r$ vertices in $V_{rep,I} \cup V_{rep,II}$ and $(e-r) \log n$ vertices from $V_{rep,0} \cup V_{rep,III}$.
        \end{itemize}
        \end{claim}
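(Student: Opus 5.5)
The argument is a case analysis on the type of the component of $\mathcal{C}_1$ that contains $v_{ext}$, using only Observations \ref{obs:Merging} and \ref{obs:DegreeVrep} and the girth and degree properties of the $(n,\eps,\gamma)$-good graph $G'$.

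\textbf{Setup.} Let $C\in\mathcal{C}_2$ contain exactly one vertex $v_{ext}$ of $V_{ext}$. Every other vertex of $C$ then lies in $V_{rep}$. Note that $v_{ext}$ cannot lie in a type $0$ component, since the unique vertex of such a component is its representative. So $v_{ext}$ is of type I, II or III.

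\textbf{First step: type I or II.}
\begin{itemize}
\item If $v_{ext}$ lies in a type I or II component, it has degree $1$ in $E'$, by the definition of these types and the fact that $\Delta\le$ degree in a bare path. Hence it has at most one edge to $V_{rep}$.
\item Starting from $v_{ext}$, walk through $C$. Each vertex of $V_{rep,I}\cup V_{rep,II}$ has $C$-degree $1$ by Observation \ref{obs:DegreeVrep}, so it is a leaf of $C$. Each vertex of $V_{rep,0}\cup V_{rep,III}$ has $C$-degree at least $2$.
\item If $C$ contained no leaf other than $v_{ext}$, then every vertex of $C$ except one would have degree at least $2$ in $C$. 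So $C$ would contain a cycle, hence have at least $\log n/(2\eps)$ vertices by condition 3 of goodness. This gives the clause with at least $\log n$ vertices from $V_{rep,0}\cup V_{rep,III}$.
\item Otherwise $C$ contains a leaf in $V_{rep,I}\cup V_{rep,II}$, and we refine. For $v_{ext}$ of type II, any such leaf gives one of the two stated options.
\item For $v_{ext}$ of type I, Observation \ref{obs:Merging} forbids $v_{ext}$ being adjacent to a type I border vertex. It also forces any type-$0$ neighbour to have degree at least $3$.
\item Trace the path from $v_{ext}$ to the $V_{rep,I}$ leaf. The intermediate vertices lie in $V_{rep,0}\cup V_{rep,III}$. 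If one of them lies in $V_{rep,III}$, the third option holds.
\item If all intermediate vertices are type-$0$ vertices of degree at least $3$, then the branching at such a vertex, combined with the absence of short cycles, forces either another leaf (a second $V_{rep,I}$ vertex, or a $V_{rep,II}$ vertex) or a long cycle. This yields option two, option one, or option four respectively.
\end{itemize}

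\textbf{Second step: type III.} Here $v_{ext}$ may have $e\ge1$ edges into $V_{rep}$. Since $C$ minus $v_{ext}$ has no other external vertex, removing $v_{ext}$ splits $C$ into at most $e$ branches, each entered by one edge.
\begin{itemize}
\item Consider any branch containing no leaf from $V_{rep,I}\cup V_{rep,II}$. Every vertex of that branch has $C$-degree at least $2$, so the branch together with $v_{ext}$ contains a cycle. By girth, such a branch then has at least $\log n$ vertices, all from $V_{rep,0}\cup V_{rep,III}$.
\item Let $r$ be the number of branches containing a leaf. Summing over the branches gives $r$ vertices in $V_{rep,I}\cup V_{rep,II}$ and $(e-r)\log n$ vertices from $V_{rep,0}\cup V_{rep,III}$.
\end{itemize}

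\textbf{Main obstacle.} The delicate part is the type I refinement, namely distinguishing options two and three from the simpler leaf case. The plan is to rely carefully on the merging step. Merging guarantees that a type I border vertex is never adjacent via $E'$ to another type I border vertex. Any type-$0$ neighbour then has degree at least $3$ and so creates an extra branch, and girth forces that branch to end in a leaf or be long.

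A second subtlety is that distinct branches (or the two ends of a cycle) might share vertices, which would break the leaf-counting. I would handle this with the girth bound $\log n/(2\eps)$, which is much larger than $\log n$, so short branches form trees.
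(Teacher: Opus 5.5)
Your treatment of the type $I$ and type $II$ cases is essentially the paper's argument. Leaves of $C$ other than $v_{ext}$ lie in $V_{rep,I}\cup V_{rep,II}$ by Observation~\ref{obs:DegreeVrep}. A lone leaf forces a cycle, which is long by girth. For type $I$, Observation~\ref{obs:Merging} produces either a $V_{rep,III}$ vertex or a type-$0$ vertex of degree at least $3$, after which there is either a third leaf or a cycle. Applying the merging observation at $v_{ext}$ rather than at the $V_{rep,I}$ leaf is immaterial. Only the first intermediate vertex is guaranteed to have degree at least $3$, but that is all you use.

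The gap is in the type $III$ case. Your count runs over components of $C\setminus v_{ext}$, and the assertion that each such branch is ``entered by one edge'' is false. Whenever $v_{ext}$ lies on a cycle of $C$, several of its $e$ edges enter the same component. There can then be $b<e$ branches, and summing over branches only certifies $(b-r)\log n$ vertices.

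For example, let $C$ be a single long cycle through $v_{ext}$ whose other vertices lie in $V_{rep,0}\cup V_{rep,III}$. Then $e=2$ and there is one leafless branch, so your argument certifies only $\log n$ vertices where $2\log n$ are needed. More generally, knowing that a branch entered by $t$ edges contains one long cycle does not give $t\log n$ vertices. Your closing remark that ``short branches form trees'' points in the right direction but does not supply the needed per-edge disjointness.

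The paper localises around each edge. For each neighbour $u_i$ of $v_{ext}$ in $C$, let $D_i$ be the set of vertices at distance at most $\log n$ from $u_i$ in $C\setminus v_{ext}$. Since $G'$ has no cycle of length at most $2\log n+2<\log n/(2\eps)$, three things hold. The $D_i$ are pairwise disjoint trees. No vertex of $D_i$ other than $u_i$ is adjacent to $v_{ext}$. Consequently, if $D_i$ meets no vertex of $V_{rep,I}\cup V_{rep,II}$, every vertex of $D_i$ has degree at least $2$ in $C$, and a breadth-first search from $u_i$ reaches depth $\log n$ inside $D_i$.

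This yields $\log n$ vertices of $V_{rep,0}\cup V_{rep,III}$ belonging to the edge $v_{ext}u_i$ alone. Taking $r$ to be the number of $D_i$ that contain a vertex of $V_{rep,I}\cup V_{rep,II}$ gives exactly the required $r$ and $(e-r)\log n$.
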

        \begin{proof}
        Suppose that $v_{ext}$ is of type $I$. First, observe that by definition, there is at most one edge between $V_{ext}$ and $V_{rep}$ in $C$.
        We may also assume that $C$ contains no vertex from $V_{rep,II}$ (as otherwise outcome $1$ holds).
        Let $L$ be the number of leaves in $C$, and note that a leaf of $C$ is either $v_{ext}$, or belongs to $V_{rep,I}$ by \Cref{obs:DegreeVrep} (as we have excluded that $C$ contains a vertex from $V_{rep,II}$).
        If $L=1$, then $C$ contains a cycle, and therefore outcome $4$ holds.
        If $L \geq 2$, then there exists $u$ in $C \cap V_{rep,I}$.
        Based on \Cref{obs:Merging}, it follows that $C$ also contains a vertex in $V_{rep,III}$ or a vertex in $V_{rep,0}$ of degree at least $3$.
        In the first case, we are in outcome $3$, and in the second, either $L \geq 3$, leading to outcome $2$, or $C$ contains a cycle, which results in outcome $4$. \\
        The case where $v_{ext}$ is of type $II$ can be treated in a similar manner: again, by definition, there is at most one edge between $V_{ext}$ and $V_{rep}$ in $C$.
        Then we may suppose that $C$ contains no vertex from $V_{rep,II}$ (as otherwise outcome $6$ holds).
        Letting $L$ be the number of leaves in $C$, we have that a leaf of $C$ is either $v_{ext}$, or belongs to $V_{rep,I}$ by \Cref{obs:DegreeVrep}.
        If $L=1$, then $C$ contains a cycle, and therefore outcome $7$ holds.
        If $L \geq 2$, then there exists $u$ in $C \cap V_{rep,I}$, and outcome $5$ holds. \\
        The case where $v_{ext}$ is of type $III$ can also be treated in a similar manner: let $e$ be the number of neighbours of $v_{ext}$ in $V_{rep}$, call them $u_1, \dots, u_e$. 
        For every $i$, let $D_i$ be the set of vertices at distance at most $\log n$ from $u_i$ in the graph induced by $C \setminus v_{ext}$.
        Since $G'$ does not contain any cycle of length at most $2\log n +1$ for $\eps$ small enough, it follows that the sets $D_i$ are disjoint, and that each $D_i$ either contains at least one vertex in $V_{rep,I} \cup V_{rep,II}$, or $\log n$ vertices in $V_{rep,0} \cup V_{rep,III}$.
        Letting $r$ be the number of indices satisfying the first case, we obtain the desired result.
        \end{proof}
        Let $\alpha_1, \dots, \alpha_8$ be the number of components of each type in \Cref{cl:Classification1Vext}. 
        Additionally, let $\alpha_{4,0}$ and $\alpha_{4,III}$ be the number of vertices in outcome $4$ being respectively in $V_{rep,0}$ and $V_{rep,III}$, so that 
        \begin{align}
        \label{eq:BoundAlpha4Alpha40}
            \alpha_{4,0}+\alpha_{4,III} \geq \alpha_4 \log n.
        \end{align}
        Similarly, let $\alpha_{7,0}$ and $\alpha_{7,III}$ be the number of vertices in outcome $7$ being respectively in $V_{rep,0}$ and $V_{rep,III}$, so that 
        \begin{align}
        \label{eq:BoundAlpha7Alpha70}
            \alpha_{7,0}+\alpha_{7,III} \geq \alpha_7 \log n.
        \end{align}
        Let  $\beta_{8,0}$, $\beta'_{8,I}$, $\beta'_{8,II}$ and $\beta_{8,III}$ be respectively such that the number of vertices of type $0$, $I$, $II$ and $III$ in a component of outcome $8$ is respectively $\beta_{8,0} \log n$, $\beta'_{8,I}$, $\beta'_{8,II}$ and $\beta_{8,III} \log n$.
        Note that we have
        \begin{align}
        \label{ineq:beta8prime}
        \beta'_{8,I}+\beta'_{8,II} \geq \alpha_8-\beta_{8,0}-\beta_{8,III}.
        \end{align}
        We now claim the following.
        \begin{claim}
        \label{cl:ClassificationNoVext}
        For every connected component $C$ in $\mathcal{C}_2$ which contains no vertex from $V_{ext}$, at least one of the following holds. 
        \begin{itemize}
            \item $C$ contains at least $\log n$ vertices from $V_{rep,0} \cup V_{rep,III}$,
            \item $C$ contains at least three vertices from $V_{rep,I}$,
            \item $C$ contains at least two vertices from $V_{rep,I}$ and at least one vertex from $V_{rep, III}$,
            \item $C$ contains at least one vertices from $V_{rep,I}$, and at least one vertex from $V_{rep,II}$,
            \item $C$ contains at least two vertices from $V_{rep,II}$.
        \end{itemize}
        \end{claim}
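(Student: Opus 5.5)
The plan mirrors the proof of \Cref{cl:Classification1Vext}. Fix a component $C \in \mathcal{C}_2$ with $C \cap V_{ext} = \emptyset$. Then every vertex of $C$ lies in $V_{rep}$, and we classify the vertices of $C$ by their degree in $C$ using \Cref{obs:DegreeVrep}. Vertices of $V_{rep,I} \cup V_{rep,II}$ have degree exactly $1$ in $C$, so they are leaves. Vertices of $V_{rep,0} \cup V_{rep,III}$ have degree at least $2$ in $C$. Let $L$ be the number of leaves of $C$, so that $L = |C \cap (V_{rep,I} \cup V_{rep,II})|$.

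\textbf{Few leaves.} Suppose first that $L \leq 1$. Then $C$, being connected with at most one vertex of degree $1$, contains a cycle. Since $G'$ has no cycle of length at most $\frac{\log n}{2\eps}$, the component $C$ has at least $\frac{\log n}{2\eps}$ vertices. At most one of these is a leaf, so for $\eps$ small at least $\log n$ vertices lie in $V_{rep,0} \cup V_{rep,III}$, and outcome $1$ holds.

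\textbf{At least two leaves.} Now suppose $L \geq 2$. If at least two leaves lie in $V_{rep,II}$, outcome $5$ holds. If one leaf lies in $V_{rep,I}$ and another in $V_{rep,II}$, outcome $4$ holds. So we may assume all leaves lie in $V_{rep,I}$, giving $L \geq 2$ vertices of $V_{rep,I}$. If $L \geq 3$, outcome $2$ holds. It remains to treat $L = 2$ with both leaves $u,u' \in V_{rep,I}$.

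\textbf{The case $L = 2$.} Here the main tool is \Cref{obs:Merging}, applied to the edge of $E'$ at $u$. Its other endpoint $w$ is not in a type $I$ component. Also $w \notin V_{rep,II}$, since all leaves are in $V_{rep,I}$, and $w \neq u'$, unless $C$ is the single edge $uu'$, which is excluded by the same observation. Hence $w \in V_{rep,0} \cup V_{rep,III}$.
\begin{itemize}
\item If $w \in V_{rep,III}$, outcome $3$ holds.
\item Otherwise $w \in V_{rep,0}$, and by \Cref{obs:Merging} it has degree at least $3$ in $G'$. Since $w$ is a single-vertex component of $\mathcal{C}_1$, all its edges lie in $E'$, so $d_C(w) \geq 3$. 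A connected graph with exactly two leaves and a vertex of degree at least $3$ cannot be a tree, because a tree with a vertex of degree at least $3$ has at least three leaves. So $C$ contains a cycle, and as in the few-leaves case outcome $1$ holds.
\end{itemize}

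\textbf{Main obstacle.} The delicate point is the case $L = 2$. There one must check carefully that \Cref{obs:Merging} applies to the border vertex $u$ as a representative of a type $I$ component. One must also check that the degree-$3$ conclusion transfers to the degree in $C$, which is exactly where single-vertex components are used.
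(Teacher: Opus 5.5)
Your proof is correct and is essentially the paper's argument. You count leaves of $C$ via \Cref{obs:DegreeVrep}, use the girth condition when $L\le 1$, and dispose of the cases with a leaf in $V_{rep,II}$ or with $L\ge 3$. In the remaining case you use \Cref{obs:Merging} to produce either a vertex of $V_{rep,III}$ or a type-$0$ vertex of degree at least $3$, which forces a cycle. The only difference is organisational: you first reduce to $L=2$ and then examine the specific $E'$-neighbour $w$ of a leaf $u$, which makes the paper's final ``either $L\ge 3$ or $C$ contains a cycle'' step slightly more explicit.
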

        \begin{proof}
            Let $L$ be the number of leaves of $C$.
            If $L \leq 1$, then $C$ contains a cycle, and therefore outcome $1$ holds.
            If $L \geq 2$, then $C$ contains at least two vertices from $V_{rep,I} \cup V_{rep,II}$.
            Therefore, either $C$ contains at least two vertices from $V_{rep,II}$ (in which case outcome $5$ is satisfied), or $C$ contains at least one vertex from $V_{rep,I}$ and at least one vertex from $V_{rep,II}$ (in which case outcome $4$ is satisfied), or at least two vertices from $V_{rep,I}$.
            In that latter case, it follows from \Cref{obs:Merging} that $C$ either contains a vertex in $V_{rep,II} \cup V_{rep,III}$ (in which case outcome $3$ or $4$ holds) or a vertex in $V_{rep,0}$ of degree at least $3$.
            In the latter possibility, either $L \geq 3$ and outcome $2$ or $4$ holds, or $C$ contains a cycle and outcome $1$ holds.
        \end{proof}
        Let $\gamma_1$, $\gamma_2$, $\gamma_3$, $\gamma_4$ and $\gamma_5$ be the number of components of each of the types described in \Cref{cl:ClassificationNoVext}. \\
        Counting the number of representative vertices of type I, we get
        \begin{align}
        \label{ineq:VISparseRegime}
            |V_I| \geq 2 \alpha_2+\alpha_3+\alpha_5+\beta'_{8,I}+3\gamma_2+2\gamma_3+\gamma_4.
        \end{align}
        Counting the number of representative vertices of type II, we get
        \begin{align}
        \label{ineq:VIISparseRegime}
            |V_{II}| \geq \alpha_1+\alpha_6+\beta'_{8,II}+\gamma_4+2\gamma_5.
        \end{align}
        Counting the number of representative vertices of type III, we get
        \begin{align}
        \label{ineq:VIIISparseRegime}
            |V_{III}| \geq \alpha_3+ \alpha_{4,III} + \alpha_{7,III}+ \beta_{8,III} \log n +\gamma_3.
        \end{align}
        We say that a component in $\mathcal{C}_2$ is \emph{single} if it contains exactly one vertex in $V_{ext}$.
        The number of vertices $\delta_{I}$ from $V_{ext,I}$ which are not in a single component satisfies
        \begin{align}
        \label{ineq:LeftoverVextI}
            \delta_I \geq |V_{I}| - \alpha_1-\alpha_2-\alpha_3 -\alpha_4.
        \end{align}
        The number of vertices $\delta_{II}$ from $V_{ext,II}$ which are not in a single component satisfies
        \begin{align}
        \label{ineq:LeftoverVextII}
            \delta_{II} \geq 2|V_{II}| - \alpha_5-\alpha_6-\alpha_7.
        \end{align}
        The number of vertices $\delta_{III}$ from $V_{ext,III}$ which are not in a single component satisfies
        \begin{align}
        \label{ineq:LeftoverVextIII}
            \delta_{III} \geq |V_{III}| - \alpha_8.
        \end{align}
        The number of edges in between $V_{ext}$ and $V_{rep}$ which are part of a single component is
        \begin{align}
        \label{ineq:EdgesBetweenVextVrep}
            |E_{single}(V_{ext},V_{rep})| = \alpha_1+\dots+\alpha_7+\beta'_{8,I}+\beta'_{8,II}+\beta_{8,0}+\beta_{8,III}.
        \end{align}
        By simple rearrangement, we have
        \begin{align}
        \label{eq:SumNotAloneVext}
            2(V'-C_1-C_2) = 2(|V_{ext}|-C_2)
            &= 2(\sum_{C \in \mathcal{C}_2} |V_{ext} \cap C| - 1 ) \nonumber \\
            &\geq 2\left(\sum_{C \in \mathcal{C}_2, |V_{ext} \cap C| \geq 2} \frac{|V_{ext} \cap C|}{2} - \sum_{C \in \mathcal{C}_2, |V_{ext} \cap C| =0} 1 \right) \\
            &\geq 2\left(\frac{\delta_{I}+\delta_{II}+\delta_{III}}{2} -(\gamma_1+\gamma_2+\gamma_3+\gamma_4+\gamma_5) \right) \nonumber \\
            &\geq \delta_{I}+\frac{3}{4}\delta_{II}+\frac{1}{4}\delta_{III} -2(\gamma_1+\gamma_2+\gamma_3+\gamma_4+\gamma_5) . \nonumber
        \end{align}
        Plugging in \eqref{ineq:LeftoverVextI}, \eqref{ineq:LeftoverVextII} and \eqref{ineq:LeftoverVextIII}, we obtain
        \begin{align*}
            2(V'-C_1-C_2) \geq \,& |V_{I}| - \alpha_1-\alpha_2-\alpha_3 -\alpha_4+\frac{3}{4}(2|V_{II}| - \alpha_5-\alpha_6-\alpha_7)+ \frac{1}{4}(|V_{III}| - \alpha_8)  \\
            \,&-2\gamma_1-2\gamma_2-2\gamma_3-2\gamma_4 -2\gamma_5.
        \end{align*}
        Plugging in \eqref{ineq:VISparseRegime}, \eqref{ineq:VIISparseRegime} and \eqref{ineq:VIIISparseRegime}, we obtain
        \begin{align*}
            2(V'-C_1-C_2) \geq \,& \frac{1}{2}\alpha_1+\alpha_2+\frac{1}{4}\alpha_3+\frac{1}{4}\alpha_{4,III}-\alpha_4+ \frac{1}{4}\alpha_5+\frac{3}{4}\alpha_6+\frac{1}{4}\alpha_{7,III}-\frac{3}{4}\alpha_7 +\beta'_{8,I}\\
            &+\frac{3}{2}\beta'_{8,II}+\frac{1}{4}\beta_{8,III} \log n -\frac{1}{4}\alpha_8-2\gamma_1+\gamma_2+\frac{5}{4}\gamma_3+\frac{1}{2}\gamma_4+\gamma_5.
        \end{align*}
        Plugging in \eqref{ineq:beta8prime} multiplied by $1/4$, we get 
        \begin{align*}
            2(V'-C_1-C_2) \geq \,& \frac{1}{2}\alpha_1+\alpha_2+\frac{1}{4}\alpha_3+\frac{1}{4}\alpha_{4,III}-\alpha_4+ \frac{1}{4}\alpha_5+\frac{3}{4}\alpha_6+\frac{1}{4}\alpha_{7,III}-\frac{3}{4}\alpha_7+\frac{3}{4}\beta'_{8,I}+\frac{5}{4}\beta'_{8,II} \\
            &+\frac{1}{4}\beta_{8,III}- \frac{1}{4}\beta_{8,0}-2\gamma_1+\gamma_2+\frac{5}{4}\gamma_3+\frac{1}{2}\gamma_4+\gamma_5.
        \end{align*}
        Using \eqref{ineq:EdgesBetweenVextVrep}, it follows that
        \begin{align}
        \label{eq:BoundFinalManyBlocks}
            2(V'-C_1-C_2) \geq \frac{1}{4}|E_{single}(V_{ext},V_{rep})|+\frac{1}{2}(\gamma_1+\gamma_2+\gamma_3+\gamma_4+\gamma_5) -2(\alpha_4+\alpha_7+\beta_{8,0}+\gamma_1)
        \end{align}
        From \eqref{eq:BoundAlpha4Alpha40} and \eqref{eq:BoundAlpha7Alpha70}, it follows that $\alpha_4 =O(\frac{n}{\log n})$ and $\alpha_7 =O(\frac{n}{\log n})$.
        Furthermore, it is straightforward that $\gamma_1=O(\frac{n}{\log n})$ and $\beta_{8,0}=O(\frac{n}{\log n})$.
        Therefore, if we have
        \begin{align*}
            |E_{single}(V_{ext},V_{rep})| \geq \frac{n}{10^{10}(\log \log n)^2},
        \end{align*}
        then by \eqref{eq:BoundFinalManyBlocks}, we get the desired result, as $N \leq n$.
        Similarly, if we have
        \begin{align*}
            \gamma_1+\gamma_2+\gamma_3+\gamma_4 \geq \frac{c_{\eps} \eps^3 n}{10^{14} (\log \log n)^2},
        \end{align*}
        where $c_{\eps}=\gamma \eps$ for the constant $\gamma$ given in Lemma \ref{lem:Structure2coreSparseRegime}, then we are also done.
        Thus, we may assume that 
        \begin{itemize}
            \item $ |E_{single}(V_{ext},V_{rep})| \leq \frac{n}{10^{10}(\log \log n)^2}$, and
            \item $\gamma_1+\gamma_2+\gamma_3+\gamma_4+\gamma_5 \leq \frac{c_{\eps} \eps^3 n}{10^{14} (\log \log n)^2}$.
        \end{itemize}
        For each block $S_i$ with $s_i \geq 2$, it is easy to check from conditions $2$ and $5$ of the definition of a good graph that at least $1+c_{\eps} s_i/(1000\log \log n)$ vertices are on its border.
        Furthermore, $\cup_{s_i \geq 2} S_i$ contains at least $N/2$ edges by \eqref{eq:BoundSumsi} and, by Lemma \ref{lem:Small-average-degreeSparseRegime}, we have $|\cup_{s_i \geq 2} S_i| \geq N/12$.
        It follows from the above arguments that the number of edges in $E'$ such that not both endpoints are in $V_{rep}$ is at least $\frac{c_{\eps} N}{12000 \log \log n } \geq \frac{c_{\eps} \eps^3 n}{10^{10} \log \log n}$.
        As $|E_{single}(V_{ext},V_{rep})| \leq \frac{n}{10^{10}(\log \log n)^2}$, we get that the number of edges incident to some vertex in $V_{ext}$ in some non-single component in $\mathcal{C}_2$ is at least $\frac{c_{\eps} \eps^3 n}{10^{11} \log \log n}$.
        Letting $M$ be the number of vertices from $V_{ext}$ which belong to some non-single component in $\mathcal{C}_2$, we have, as $\Delta \leq 100 \log \log n$, that 
        \begin{align*}
            M \geq \frac{c_{\eps} n}{10^{13} (\log \log n)^2}.
        \end{align*}
        Plugging this into \eqref{eq:SumNotAloneVext} we obtain
        \begin{align*}
            2(V'-C_1-C_2) \geq M -2(\gamma_1+\gamma_2+\gamma_3+\gamma_4) \geq \frac{c_{\eps} \eps^3 n}{10^{14} (\log \log n)^2},
        \end{align*}
        finishing the proof.
\end{proof}

\section{Proof of Theorem \ref{thm:mainSparseRegime}}
\label{sec:MainProof}

We now proceed to the proof of \Cref{thm:mainSparseRegime}.

\begin{proof}[Proof of \Cref{thm:mainSparseRegime}.]
    By Lemma \ref{lem:Structure2coreSparseRegime}, with high probability, $G$ has a $(n,\eps, \gamma)$-good subgraph $G'$, for some absolute constant $\gamma >0$.
    By \Cref{lem:PropertyWitness}, if there is no witness $S_1 \cup \dots \cup S_k$ of $G'$ (with respect to the embedding associated to $V(G')$) such that there are two distinct indices $i,j$ such that $S_i$ and $S_j$ both contain a vertex of degree at least $3$, then it follows that the set of vertices of degree at least $3$ of $G'$ is reconstructible.
    We are then done, as $G'$ has $\Omega(\eps^3n)$ vertices of degree at least $3$ by definition. \\
    Therefore, it suffices to show that with high probability $G'$ has no such witness, which is also equivalent to proving that after permuting the vertex set of $G$ through a uniformly at random permutation $\sigma$, with high probability the graph $G_{\sigma}'$ has no such witness, where $G'_{\sigma}$ is the graph obtained from $G'$ by relabelling its vertex set according to $\sigma$.
    In other words, it suffices to show that  
    \begin{align}
    \label{eq:MainFirstMoment2SparseRegime}
        \mathcal{Z} :=\sum_k \sum_{\cup_{i \leq k} S_i=V(G')}  \mathbb{P}_{\sigma}(S_1 \cup \dots \cup S_k \text{ is a witness of }G'_{\sigma}) =o_{\eps}(1),
    \end{align} 
    where the second sum is over each connected partition such that at least two of the $S_i$ contain at least one vertex of degree at least $3$. 
    Moreover, we remark that if $(S_1, \dots, S_k)$ is a witness and $(S_1, \dots, S_a)$ is a super-block which contains at least one vertex of degree at least $3$, then it is straightforward to see that $(S_1, \dots, S_a, \cup_{a+1 \leq q\leq k} S_q)$ is also a witness.
    Therefore, it is sufficient to show the bound in \eqref{eq:MainFirstMoment2SparseRegime} when the union bound only goes through connected partitions having exactly one super-block $\tilde{S}$, and such that $\tilde{S}$ contains at least one vertex of degree at least $3$. \\
    For the rest of this proof, every time we consider a connected partition $S_1 \cup \dots \cup S_k=V(G')$, we will assume that $S_k$ has maximum size among $S_1, \dots, S_k$. \\
By Lemma \ref{lem:Proba-witness}, we have
\begin{align*}
    \mathcal{Z} \leq Z:=\sum_k \sum_{\cup_{i \leq k} S_i=V(G')} \ind_{B=1} \left(\frac{2}{n} \right)^{V'-C_2-(k-1)},
\end{align*}
where $E'$ is the set of edges whose endpoints are in different $S_i$, $V'$ is the number of vertices being the endpoint of at least one edge in $E'$, $C_2$ is the number of components non-reduced to a single vertex in the graph induced by $E'$, and $B$ is set to $1$ if and only if $S_1 \cup \dots \cup S_k$ has exactly one super-block $\tilde{S}$, and $\tilde{S}$ has at least one vertex of degree at least $3$. \\
We now write $Z=\Sigma_1+\Sigma_2+\Sigma_3$ where $\Sigma_1$ is the contribution where $s_1+\dots+s_{k-1} \leq \log N/ \log \log N$, $\Sigma_2$ the contribution where $\log N/ \log \log N \leq s_1+\dots+s_{k-1} \leq N/2$, and $\Sigma_3$ the contribution where $N/2 \leq s_1+\dots+s_{k-1}$, and $s_i=|S_i|$ for every $i$.
To settle \eqref{eq:MainFirstMoment2SparseRegime}, it suffices to show that $Z =o_{\eps}(1)$. To do this, we successively prove that $\Sigma_1 =o_{\eps}(1)$, $\Sigma_2 =o_{\eps}(1)$ and $\Sigma_3 =o_{\eps}(1)$.
We first prove the bound for $\Sigma_1$. 
Using the second part of Lemma \ref{lem:IneqSuperblock}, we have 
\begin{align}
\label{eq:BoundSigma1SparseRegime}
    \Sigma_1= \sum_k \sum_{\cup_{i \leq k} S_i=V(G')} \ind_{B=1}  \left(\frac{2}{n}\right)^{V'-C_2-(k-1)} \nonumber &\leq \sum_k \sum_{\cup_{i \leq k} S_i=V(G')} \ind_{B=1} \left(\frac{2}{n}\right)^2 \nonumber \\
    &= \sum_k \sum_{s_1+\dots+s_{k-1} \leq \frac{\log N}{\log \log N}} f(s_1,\dots,s_{k-1}) \left(\frac{2}{n}\right)^2,
\end{align}
where we recall that $f(s_1,\dots,s_{k-1})$ is the number of connected partitions $S_1 \cup \dots \cup S_k$ of $V(G')$ such that for each $i \in [k-1]$, we have $|S_i|=s_i$, and the number of super-blocks is exactly 1.
Using the bound given by \Cref{lem:Bound-Connected-Partitions}, we get
\begin{align*}
    \Sigma_1 &\ll \sum_k \sum_{s_1+\dots+s_{k-1} \leq \frac{\log N}{\log \log N}} \frac{(1000 \log \log n)^{2(s_1+\dots+s_{k-1})} }{n} \\
    &\ll \sum_{k \leq \frac{\log N}{\log \log N}} \sum_{a \in [k-1,\frac{\log N}{\log \log N}]} \binom{a+k-1}{k-1}  \frac{(1000 \log \log n)^{2a}}{n} \\
    &\ll \log N \sum_{a \leq \frac{\log N}{\log \log N}} \frac{(10000 \log \log n)^{2a}}{n} \\
    &=o_{\eps}(1).
\end{align*}
We now show that $\Sigma_2 =o_{\eps}(1)$ in the same manner.
Using the first part of Lemma \ref{lem:IneqSuperblock}, we have
\begin{align}
\label{eq:BoundSigma2SparseRegime}
    \Sigma_2 &\leq \sum_k \sum_{\cup_{i \leq k} S_i=V(G')} \ind_{B=1} \left(\frac{2}{n}\right)^{\frac{c_{\eps}(s_1+\dots+s_{k-1})}{400\log \log n}} \nonumber \\
    &\leq \sum_k \sum_{s_1+\dots+s_{k-1} \in [\frac{\log N}{\log \log N},N/2]} f(s_1,\dots,s_{k-1}) \left(\frac{2}{n}\right)^{\frac{c_{\eps}(s_1+\dots+s_{k-1})}{400\log \log n}}.
\end{align}
Using the bound given by \Cref{lem:Bound-Connected-Partitions}, we obtain
\begin{align*}
    \Sigma_2 &\ll \sum_k \sum_{s_1+\dots+s_{k-1} \in [\frac{\log N}{\log \log N},N/2]} (1000 \log \log n)^{2(s_1+\dots+s_{k-1})}\left(\frac{2}{n}\right)^{\frac{c_{\eps}(s_1+\dots+s_{k-1})}{800\log \log n}} \\
    &\ll \sum_{a \in [\frac{\log N}{ \log \log N},N/2]} \sum_{k \leq a} \binom{a+k-1}{k-1} (1000 \log \log n)^{2a}\left(\frac{2}{n}\right)^{\frac{c_{\eps}a}{800\log \log n}} \\
    &\ll \sum_{a \in [\frac{\log N}{ \log \log N},N/2]} (10000 \log \log n)^{2a}\left(\frac{2}{n}\right)^{\frac{c_{\eps}a}{800 \log \log n}} \\
    &=o_{\eps}( 1).
\end{align*}
Finally we show that $\Sigma_3 =o_{\eps}(1)$, still in the same manner. 
By Lemma \ref{lem:IneqSuperblock}, we have
\begin{align}
\label{eq:BoundSigma3SparseRegime}
    \Sigma_3 &\leq \sum_k \sum_{\cup_{i \leq k} S_i=V(G')} \ind_{B=1} \left(\frac{2}{n}\right)^{\frac{\eta N}{(\log \log n)^2}} \leq \sum_k \sum_{s_1+\dots+s_{k-1} \in [N/2,N]} f(s_1,\dots,s_{k-1}) \left(\frac{2}{n}\right)^{\frac{\eta N}{(\log \log n)^2}}.
\end{align}
Using the bound given by \Cref{lem:Bound-Connected-Partitions}, we obtain
\begin{align*}
    \Sigma_3 &\ll \sum_k \sum_{s_1+\dots+s_{k-1} \in [N/2,N]} (1000 \log \log n)^{2(s_1+\dots+s_{k-1})}\left(\frac{2}{n}\right)^{\frac{\eta N}{(\log \log n)^2}} \\
    &\ll \sum_{a \in [N/2,N]} \sum_{k \leq a} \binom{a+k-1}{k-1} (1000 \log \log n)^{2a}\left(\frac{2}{n}\right)^{\frac{\eta N}{(\log \log n)^2}} \\
    &\ll \sum_{a \in [N/2,N]} (10000 \log \log n)^{2a}\left(\frac{2}{n}\right)^{\frac{\eta N}{(\log \log n)^2}}.
\end{align*}
As $N \geq \frac{\eps^3n}{10^5}$, it follows that $\Sigma_3 =o_{\eps}(1)$. This finishes the proof.
\end{proof}

\section{The deterministic setting}
\label{sec:Deterministic}

We start with a proof of \Cref{thm:CounterexampleBT}.

\begin{proof}[Proof of \Cref{thm:CounterexampleBT}]
    Suppose that $n=2^k$ for some integer $k$, and represent each integer $a \leq n-1$ by its binary decomposition $(a_0, \dots, a_{k-1})$.
    We construct the set $V$ as the image of the embedding $f \colon [n] \rightarrow \mathbb{R}$ given by:
    \begin{align*}
        f \colon (a_0,\dots,a_{k-1}) \rightarrow \sum_{i=0}^{k-1} a_i 3^i.
    \end{align*}
    Let $G$ be the graph on vertex set $V$, where two vertices are connected by an edge if and only if their corresponding integers differ in exactly one bit of their binary representation, i.e. $G$ is the hypercube graph of dimension $k$. \\
    For every $0 \leq j \leq k-1$, define another embedding $f_j \colon [n] \rightarrow \mathbb{R}$ as follows:
    \begin{align*}
        f_j \colon (a_0,\dots,a_{k-1}) \rightarrow \sum_{i=0}^{k-1} (1-2 \cdot 1_{i=j}) a_i 3^i.
    \end{align*}
    We remark that for every $j$, the embeddings $f$ and $f_j$ agree on $E(G)$.
    Indeed, if $ab \in E(G)$ where $a$ and $b$ differ in bit $i$, then $|f(a)-f(b)|=|f_j(a)-f_j(b)|=3^i$. 
    
    Let $a,b \leq n-1$ and $j \leq k-1$ be non-negative integers, such that $a$ and $b$ correspond respectively to the sequences of bits $(a_0, \dots, a_{k-1})$ and $(b_0, \dots, b_{k-1})$, with $a_j \neq b_j$ and $ab \notin E(G)$.
    We claim that the following holds.
    \begin{claim}
        We have $|f(a)-f(b)| \neq |f_j(a) -f_j(b)|$.
    \end{claim}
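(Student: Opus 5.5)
We compute both differences explicitly and compare them. Write $\delta_i = a_i - b_i \in \{-1,0,1\}$. Then
\begin{align*}
f(a)-f(b) = \sum_{i=0}^{k-1} \delta_i 3^i, \qquad f_j(a)-f_j(b) = \sum_{i \neq j} \delta_i 3^i - \delta_j 3^j .
\end{align*}
We must show that these two quantities are neither equal nor opposite. Suppose first that they are equal. Subtracting gives $2\delta_j 3^j = 0$, so $\delta_j=0$. This contradicts $a_j \neq b_j$.

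Now suppose that they are opposite. Adding the two expressions gives $2\sum_{i\neq j}\delta_i 3^i = 0$, that is, $\sum_{i \neq j} \delta_i 3^i = 0$ with all $\delta_i \in \{-1,0,1\}$. We invoke uniqueness of balanced ternary representations: a sum $\sum_i \delta_i 3^i$ with digits in $\{-1,0,1\}$ vanishes only if every digit is $0$. To see this, let $i_0$ be the largest index with $\delta_{i_0}\neq 0$. Then
\begin{align*}
\Bigl|\sum_{i<i_0}\delta_i 3^i\Bigr| \le \sum_{i<i_0} 3^i = \frac{3^{i_0}-1}{2} < 3^{i_0},
\end{align*}
so the total sum is nonzero. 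Hence $\delta_i = 0$ for all $i\neq j$. Combined with $\delta_j \neq 0$, this means $a$ and $b$ differ in exactly one bit, so $ab \in E(G)$. This contradicts the hypothesis $ab\notin E(G)$.

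Therefore $|f(a)-f(b)| \neq |f_j(a)-f_j(b)|$. The only nontrivial step is the balanced-ternary uniqueness, which the displayed geometric-series bound settles. The choice of base $3$ is exactly what makes digits $\pm1$ unable to cancel.
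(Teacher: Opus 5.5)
Your proof is correct and follows essentially the same route as the paper: equality is ruled out because the flipped bit $j$ shifts the difference by $\pm 2\cdot 3^j$, and the opposite sign is ruled out by uniqueness of ternary representations together with $ab\notin E(G)$. The only cosmetic difference is that the paper assumes $a_j=0$, $b_j=1$, reduces to $f(b)-f(a)=3^j$ and cites ordinary base-$3$ uniqueness, whereas you keep signed digits $\delta_i\in\{-1,0,1\}$ and prove balanced-ternary uniqueness directly via the geometric-series bound.
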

    \begin{proof}
    Suppose by symmetry that $b_j=1$ and $a_j=0$.
    By definition of $f_j$, we have 
        $$f_j(a) -f_j(b)=f(a)-f(b)+2 \cdot 3^j.$$
    It follows that
    \begin{align}
    \label{eq:EmbeddingDiff1}
        f_j(a) -f_j(b) \neq f(a)-f(b).
    \end{align}
    Next, suppose for contradiction that $f(a)-f(b)+2 \cdot 3^j = -(f(a)-f(b))$. 
    Rearranging and simplifying, this gives $f(b)-f(a) =  3^j $.
    Therefore, $\sum_{\ell=0}^{k-1} b_{\ell}3^{\ell} = \sum_{\ell=0}^{k-1} (a_{\ell}+\ind_{\ell=j})3^{\ell}$, and by unicity of the representation in base $3$, we have $b_{\ell}=a_{\ell}$ for every $\ell \in \{0, \dots, k-1 \}$ with $\ell \neq j$. 
    This contradicts the assumption that $ab \notin E(G)$.
    Hence, we have
    \begin{align}
    \label{eq:EmbeddingDiff2}
        f_j(a) -f_j(b) \neq -(f(a)-f(b)).
    \end{align}
    Combining \eqref{eq:EmbeddingDiff1} and \eqref{eq:EmbeddingDiff2}, we obtain the desired result.
    \end{proof}
    From this claim, it is now easy to conclude.
    Indeed, it follows that the distance $ab$ is not reconstructible for every $ab \notin E(G)$. 
    As $G$ does not contain any triangle, the conclusion follows.
\end{proof}

We now give a proof of \Cref{thm:weakBT}.

\begin{proof}[Proof of \Cref{thm:weakBT}]
    We prove this by induction on $n$. 
    For $n=2$, it is trivially true.
    Now suppose that the result is true for every graph on at most $n-1$ vertices.
    Let $G$ be a graph on $n$ vertices and $m$ edges. 
    By \Cref{thm:GaramvolgyiCriteria}, either $G$ is globally rigid in $\mathbb{R}$ (in which case we are done), or $G$ has a NAC-colouring for which $|R_i \cap B_j| \leq 1$ for every $1 \leq i \leq k$ and $1 \leq j \leq \ell$ where $R_1, \dots, R_k$ are $B_1, \dots, B_\ell$ are the vertex sets of the connected components of the subgraph of red and blue edges, respectively. 
    Let $m^r_i$ be the number of red edges in component $R_i$, and let $m^b_j$ be the number of red edges in component $B_j$.
    Let $r_i=|R_i|$ and $b_j=|B_j|$.
    Note that by definition of a NAC-colouring, we have $r_i \leq n-1$ for every $i \in [k]$, and that not all $r_i$ are equal to $1$.
    Similarly, we have $b_j \leq n-1$ for every $j \in [\ell]$, and that not all $b_j$ are equal to $1$.
    Let $\alpha=\frac{m}{n \log n}$. 
    If there exists some $i$ such that $r_i \neq 1$ and $\frac{m^r_i}{r_i \log r_i} \geq \alpha$, then we are done by induction by restricting ourselves to the subgraph $G[R_i]$.
    The same holds if there exists some some $i$ such that $b_j \neq 1$ and $\frac{m^b_j}{b_j \log b_j} \geq \alpha$.
    Therefore, we may assume that $m^r_i \leq \alpha r_i \log r_i$ for every $i \in [k]$, with the strict inequality holding for at least one $i \in [k]$, and similarly that $m^b_j \leq \alpha b_j \log b_j$, for every $j \in [\ell]$, with the strict inequality holding for at least one $j \in [\ell]$.
    Thus we deduce that
    \begin{align}
    \label{eq:m-Entropy}
     m = \sum_i m^r_i + \sum_j m^b_j < \alpha \left( \sum_i r_i \log r_i + \sum_j b_j \log b_j \right).
    \end{align}
    Let $X$ be the set of ordered pairs $(i,j)$ such that $|R_i \cap B_j|=1$.
    Then 
    \begin{align*}
        \sum_i r_i \log r_i + \sum_j b_j \log b_j = \sum_{(i,j) \in X} \log r_i+ \log b_j = \sum_{(i,j) \in X} \log (r_ib_j) = \log \prod_{(i,j) \in X} r_i b_j.
    \end{align*}
    By AM-GM inequality, noting that $|X|=n$, we have that 
    \begin{align*}
        \prod_{(i,j) \in X} r_i b_j \leq \left(\frac{\sum _{(i,j) \in X} r_i b_j}{n}\right)^n \leq \left(\frac{(\sum_i r_i)(\sum_j b_j)}{n}\right)^n =n^n,
    \end{align*}
    Putting all the above together, we obtain that 
    \begin{align*}
        \sum_i r_i \log r_i + \sum_j b_j \log b_j \leq n \log n.
    \end{align*}
    Plugging this in \eqref{eq:m-Entropy} yields $ m < \alpha n \log n=m$, which is a contradiction.
    This finishes the proof.
\end{proof}

To show \Cref{thm:weakBTdense}, we will in fact establish the following stronger result.

\begin{theorem}
\label{thm:Technical-BTdense}
    For every graph $G$ on $n$ vertices and $m$ edges with $n$ sufficiently large, we have for every $\eps \in (0,1/2)$ that $G$ has a globally rigid subgraph in $\mathbb{R}$ of size at least
    \begin{align*}
        \frac{m}{n}2^{-\frac{\log n}{\log (1/\eps)}}-3\frac{\log n}{\eps}.
    \end{align*}
\end{theorem}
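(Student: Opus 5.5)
We argue as in the proof of \Cref{thm:weakBT}: a recursion driven by \Cref{thm:GaramvolgyiCriteria}, but with a sharper way of charging the loss at each step. Since globally rigid subgraphs are found inside pieces of $G$, it is natural to track the density $d(H)=e(H)/|V(H)|$ along a chain $G=H_0\supseteq H_1\supseteq\dots$ of induced subgraphs. At each step either $H_t$ is globally rigid, or it admits a NAC-colouring with $|R_i\cap B_j|\le 1$, and we pass to a suitable $R_i$ or $B_j$. The aim is to show that the number of steps is at most about $\log n/\log(1/\eps)$ and that each step loses at most a factor $2$ in density, up to an additive error of order $1/\eps$. The additive term $3\log n/\eps$ in the statement is meant to absorb these errors over the steps.

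The key step is the following one-step lemma. Let $H$ have $v$ vertices and $e$ edges, and fix a NAC-colouring with $|R_i\cap B_j|\le1$. Since red and blue components intersect in at most one vertex, $v\ge\sum_i r_i$, and likewise for blue. Call a component \emph{big} if it has at least $\eps v$ vertices. Edges lying in small components $R_i$ with $r_i<\eps v$ number at most $\sum_i m^r_i$. We split into two cases.

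\emph{Case 1: some component $C$ (red or blue) with $|C|\ge \eps v$ has $m_C\ge \frac{e}{2v}|C|-|C|/\eps$.} We recurse into $C$, losing a factor $2$ in density up to the additive error. The vertex count shrinks by a factor at least $1-$(something) but need not shrink geometrically, so we must be careful here.

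\emph{Case 2: otherwise.} Then most edges live in components of size $<\eps v$. We pick one such component maximising its density. Its size is then at most $\eps v$, so the vertex count shrinks by a factor $\eps$. Hence this case can occur at most $\log n/\log(1/\eps)$ times. We must also ensure it costs at most a factor $2$ in density: averaging over components gives $\max_C m_C/|C|\ge (\sum m_C)/(\sum|C|)\ge (e/2)/(2v)$ roughly, using that red plus blue sizes sum to at most $2v$.

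The main obstacle is Case 1, where the density is kept but the size does not drop geometrically. There are two fixes to try. The first is to note that a big red component $R$ meets each blue component in at most one vertex, so the blue edges are essentially lost. Passing to $R$ then loses at most a factor $2$ only once per genuine size reduction, and repeated big-component steps give vertex sets shrinking by at least one each time with no density loss beyond the additive $1/\eps$ term. This is where the $-3\log n/\eps$ appears, as the summed additive losses weighted appropriately. The second is to merge consecutive big-component steps into one and rely on the fact that the number of big components is at most $2/\eps$. I would first set up the potential $\Phi(H)=d(H)+3\log|V(H)|/\eps$ and check that it decays by at most a factor $2$ per Case 2 step and is non-increasing up to the additive slack per Case 1 step. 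The process ends at a globally rigid $H$ of size at least $d(H)$, which gives the bound.
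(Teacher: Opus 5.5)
\textbf{There is a genuine gap, and it sits in your Case 1.} As you set it up, a Case 1 step only guarantees $d(C)\ge d(H)/2-1/\eps$, while $|C|$ may be as large as $|V(H)|-1$.

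Nothing stops Case 1 from recurring a number of times comparable to $n$. Each stage uses a fresh NAC-colouring, so the bound of $2/\eps$ on big components within one colouring says nothing about consecutive stages. Over many Case 1 steps the factor $2$ compounds far beyond $2^{-\log n/\log(1/\eps)}$. Even a purely additive loss of $1/\eps$ per step would be fatal: to telescope into $O(\log n/\eps)$, the loss at a step has to be proportional to the fraction $(v-|C|)/v$ of vertices removed.

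For the same reason, the potential $\Phi$ cannot be shown non-increasing in Case 1. Your hypothesis lets $d$ halve with no compensating drop in $\log|V(H)|$. Neither of your two ``fixes'' is an argument; in particular, noting that blue edges are lost when you pass to a red component bounds nothing.

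A minor point on Case 2: averaging over both colour classes gives only $e/(4v)$, a factor $4$. Averaging over the components of the colour class carrying at least $e/2$ edges, which partition $V(H)$, gives exactly $e/(2v)$.

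\textbf{The missing idea} is that passing to a big part need not cost a multiplicative factor at all. Let $S_1,\dots,S_k$ be the components of the majority colour. Each vertex has at most one edge into each $S_j$ other than its own. Such edges have the other colour, and two of them would put two vertices of $S_j$ into one component of the other colour.

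Now suppose some $A=S_i$ has $|A|\ge\eps v$, and put $B=V(H)\setminus A$. Then $e(A,B)\le|B|$, so $e\le e(A)+e(B)+|B|$. This forces $d(A)\ge d-|B|/v$ or $d(B)\ge d-|B|/v$.
\begin{itemize}
\item Passing to $A$ costs only $(v-|A|)/v\le\sum_{j=|A|+1}^{v}1/j$. Over the whole process these sums run over disjoint ranges, so they total at most $\sum_{j\le n}1/j\le 2\log n$.
\item Passing to $B$ costs at most $1$, but shrinks the vertex count by a factor $1-\eps$. So it happens at most $2\log n/\eps$ times.
\end{itemize}
Note that $B$ is a union of parts, not a single component. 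You must therefore allow recursing into arbitrary induced subgraphs and re-applying Garamv\"olgyi's criterion there, not only into some $R_i$ or $B_j$.

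Only the case where all parts are small loses a factor $2$, and there the size drops by a factor $\eps$, so it occurs at most $\log n/\log(1/\eps)$ times. Summing the three kinds of losses and using $|V(H_t)|>d(H_t)$ at the globally rigid endpoint gives the statement.
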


From \Cref{thm:Technical-BTdense}, we can easily deduce \Cref{thm:weakBTdense}.

\begin{proof}[Proof of \Cref{thm:weakBTdense} assuming \Cref{thm:Technical-BTdense}]
    We apply \Cref{thm:Technical-BTdense} with $\eps=\frac{\gamma n \log n}{m}$, where $\gamma=6 \cdot 2^{-2/\delta}$. 
    Note that $\log (1/\eps) \geq \delta \log n/2$ for sufficiently large $n$, and therefore we get that $G$ contains a globally rigid subgraph of size at least
    \begin{align*}
        \frac{m}{n}2^{-\frac{2\log n}{\delta \log n}}-\frac{3 m}{\gamma n} \geq \frac{m}{n}2^{-2/\delta-1},
    \end{align*}
    as wanted.
\end{proof}

Therefore it suffices to show \Cref{thm:Technical-BTdense}. 
We start by stating the following simple corollary of \Cref{thm:GaramvolgyiCriteria}.

\begin{Corollary}
    \label{lem:Garamvolgyi}
    Suppose that $G$ is not globally rigid $\mathbb{R}$. Then there exists $k \geq 2$ and a partition of the vertex set of $G$ into non-empty sets $S_1, \dots, S_k$ such that 
    \begin{itemize}
        \item $\sum_{i=1}^k |E(G[S_i])| \geq |E(G)|/2$,
        \item for every vertex $v \in S_i$, and $j \neq i$, we have $|E(v,S_j)| \leq 1$.
    \end{itemize}
\end{Corollary}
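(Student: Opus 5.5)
Apply \Cref{thm:GaramvolgyiCriteria} directly. Since $G$ is not globally rigid in $\mathbb{R}$, it has a NAC-colouring (both colours used, no cycle with exactly one edge of a given colour) such that $|R_i \cap B_j| \leq 1$ for all $i,j$. Here $R_1,\dots,R_k$ and $B_1,\dots,B_l$ are the vertex sets of the connected components of the red and blue subgraphs respectively. We include isolated vertices as singleton components, so each family partitions $V(G)$.

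One of the two colour classes contains at least $|E(G)|/2$ edges; by symmetry say the red one. Take the partition $S_1,\dots,S_k$ to be $R_1,\dots,R_k$.

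First check that $k \geq 2$. If $k=1$, the whole vertex set would be a single red component. Then any blue edge $xy$, which exists because both colours are used, would have $x,y$ both in $R_1$ and also in a common blue component $B_j$. This gives $|R_1\cap B_j|\ge 2$, a contradiction.

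Next, every red edge lies inside some $R_i$. Hence $\sum_i |E(G[S_i])| \geq \#\{\text{red edges}\} \geq |E(G)|/2$, which is the first condition.

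For the second condition, fix $v \in S_i$ and $j\neq i$, and suppose $v$ has two neighbours $w_1,w_2 \in S_j$. The edges $vw_1$ and $vw_2$ join different red components, so both are blue. Thus $v,w_1,w_2$ all lie in one blue component $B$. But $w_1,w_2 \in R_j \cap B$, contradicting $|R_j\cap B|\le 1$. Hence $|E(v,S_j)|\le 1$.

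There is no real obstacle here. The only point needing care is that the components in \Cref{thm:GaramvolgyiCriteria} are taken to include singleton vertices, so that both families genuinely partition $V(G)$. The NAC-cycle condition is not even needed beyond guaranteeing that both colours occur.
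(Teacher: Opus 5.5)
Your argument is correct and is exactly the derivation the paper leaves implicit when it calls this a simple corollary of \Cref{thm:GaramvolgyiCriteria}: take the components of the majority colour class, just as the components of $G\setminus W$ are taken in the proof of \Cref{lem:PropertyWitness}. One small caveat: the criterion as quoted needs $G$ connected (a triangle plus an isolated vertex has no NAC-colouring but is not globally rigid), so add a line that for disconnected $G$ the partition into connected components trivially satisfies both conditions.
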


We will derive \Cref{thm:Technical-BTdense} from the following main technical result.

\begin{theorem}
\label{thm:DensityIncrem1}
    Let $G=(V,E)$, and suppose that $G$ is not globally rigid in $\mathbb{R}$. 
    Then for every $\eps >0$ we have that
    \begin{itemize}
        \item either there exists $V_1 \subseteq V$ such that $|V_1| \leq \eps |V|$, and if $E_1$ is the set of edges of $G[V_1]$, we have $\frac{|E_1|}{|V_1|} \geq \frac{|E|}{2|V|}$,
        \item or there exists $V_1 \subseteq V$ such that $|V_1| \leq (1-\eps)|V|$ and if $E_1$ is the set of edges of $G[V_1]$, then $\frac{|E_1|}{|V_1|} \geq \frac{|E|}{|V|} -\frac{|V_1|}{|V|}$,
        \item or there exists $V_1 \subseteq V$ such that if $E_1$ is the set of edges of $G[V_1]$, then $\frac{|E_1|}{|V_1|} \geq \frac{|E|}{|V|} - \frac{|V|-|V_1|}{|V|}$.
    \end{itemize}
\end{theorem}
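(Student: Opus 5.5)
The plan is to apply \Cref{lem:Garamvolgyi} to $G$. This gives a partition $V=S_1\cup\dots\cup S_k$ with $k\ge 2$, $\sum_i e_i\ge m/2$, and the property that every vertex has at most one neighbour in each other part. Here $n=|V|$, $m=|E|$, $s_i=|S_i|$ and $e_i=|E(G[S_i])|$. The fact I will use repeatedly follows from the second property: $e(S_i,S_j)\le \min(s_i,s_j)$ for $i\ne j$. More generally, each vertex outside a set $S_i$ sends at most one edge into $S_i$, so $e(S_i,V\setminus S_i)\le n-s_i$. The proof is then a case analysis on the sizes of the parts, with one bullet of the statement for each case.

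\emph{Case A: some part $S_i$ with $s_i\le \eps n$ satisfies $e_i/s_i\ge m/(2n)$.} Take $V_1=S_i$; this is the first bullet. Otherwise, every part of size at most $\eps n$ has $e_i< s_i m/(2n)$. I will use this "small parts are sparse" information in the remaining cases.

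\emph{Case B: some part is larger than $(1-\eps)n$.} Call it $S=S_i$ and write $T=V\setminus S$, so $|T|<\eps n$. I take $V_1=S$ and aim for the third bullet. Only edges inside $T$ or between $T$ and $S$ are lost, and $e(T,S)\le n-s$ by the one-neighbour property. To bound $e(T)$, I apply the small-set information to $T$ itself. If $T$ is a single small part, the failure of Case A applies directly. In general I would run a Case A--type test on $T$ (or on a union of small parts), using the first bullet if it is dense. This yields $e(T)<(n-s)m/(2n)$. Then $e(S)\ge m-(n-s)m/(2n)-(n-s)$. A direct rearrangement shows that $e(S)/s\ge m/n-(n-s)/n$ is equivalent to $m\ge 2(n-s)$, i.e.\ average degree at least $2|T|/n$. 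In the degenerate regime $m<2(n-s)$, the target $m/n-(n-s)/n$ is below $m/(2n)$, and I will handle it by a trivial choice such as $V_1=V$ or a single edge.

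\emph{Case C: all parts have size at most $(1-\eps)n$.} Here I want the second bullet, namely a part (or a union of parts) $V_1$ with $|V_1|\le(1-\eps)n$ and $e(V_1)/|V_1|\ge m/n-|V_1|/n$. Suppose every part fails, so $e_i<s_im/n-s_i^2/n$. Summing gives $\sum_i e_i< m-\sum_i s_i^2/n$, hence the number of cross edges exceeds $\sum_i s_i^2/n$. To reach a contradiction I need an upper bound on cross edges. The one-neighbour property gives $\sum_{i<j}\min(s_i,s_j)$, and this is not in general at most $\sum_i s_i^2/n$. This is the step I expect to be the main obstacle. My first attempt is to replace single parts by unions $V_1=S_{i_1}\cup\dots\cup S_{i_t}$, adding parts greedily in decreasing order of size until the total size first exceeds some threshold below $(1-\eps)n$. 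The cross edges leaving a union $U$ are then at most $n-|U|$, by the same one-neighbour count applied to vertices outside $U$ (each has at most one neighbour in each part of $U$; I need to check whether this suffices or whether I need the count per part). If every such union also fails, the resulting inequality summed over a chain of unions should contradict $\sum_i e_i\ge m/2$. If the chain ever overshoots $(1-\eps)n$, I fall back to Case B with the union playing the role of $S$.
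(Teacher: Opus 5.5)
Your Cases A and B work (modulo the fallback discussed below), but Case C is left open, and the route you sketch for it does not go through as stated. The bound $e(U,V\setminus U)\le n-|U|$ holds only when $U$ is a single part. For $U=S_{i_1}\cup\dots\cup S_{i_t}$, a vertex outside $U$ can have one neighbour in each of the $t$ parts, so you only get $t(n-|U|)$, and the contradiction from a chain of unions is never derived. Moreover, in Case C you insist on the second bullet. But for $V_1=A$ with $|A|<n/2$, the third bullet (threshold $\alpha-(n-|A|)/n$, where $\alpha=m/n$) is strictly weaker than the second (threshold $\alpha-|A|/n$), and the natural argument may only deliver the third.

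The missing idea is to apply the inequality you already stated, $e(S_i,V\setminus S_i)\le n-s_i$, to the two-set split of $V$ into one part and the union of all the others, with a single common threshold. Split cases on part sizes instead.
\begin{itemize}
\item If every part has size at most $\eps n$ and none satisfies the first bullet, then $m/2\le\sum_i e_i<\sum_i s_i m/(2n)=m/2$, a contradiction.
\item Otherwise take a part $A$ with $|A|\ge\eps n$ and set $B=V\setminus A\neq\emptyset$, so $e(A,B)\le|B|$. If both $e(A)<|A|(\alpha-|B|/n)$ and $e(B)<|B|(\alpha-|B|/n)$, then $m=e(A)+e(B)+e(A,B)<(\alpha-|B|/n)n+|B|=m$, which is absurd. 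So either $V_1=B$ satisfies the second bullet (here $|B|\le(1-\eps)n$ and $\alpha-|V_1|/n=\alpha-|B|/n$), or $V_1=A$ satisfies the third (here $\alpha-(n-|V_1|)/n=\alpha-|B|/n$).
\end{itemize}
This replaces both your Case B and Case C, and there is no degenerate regime: whenever $A$ fails, $B$ succeeds.

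Note also that your fallback $V_1=V$ only works because the third bullet, read literally, does not forbid $V_1=V$. That reading would make the whole statement vacuous. The iteration in the proof of \Cref{thm:Technical-BTdense} needs $V_1\subsetneq V$, and the two-set argument always produces a proper subset.
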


\begin{proof}
    We apply \Cref{lem:Garamvolgyi}, and let $S_1, \dots, S_k$ be the partition of $V(G)$ as given by this corollary. 
    We set $\alpha=\frac{|E|}{|V|}$ for convenience.
    Suppose first that for every $i \in [k]$ we have $|S_i| \leq \eps n$.
    If $|E(S_i)|/|S_i| < \alpha/2$ for every $i$, then 
    \begin{align*}
        |E| \leq 2 \sum_{i=1}^k |E(S_i)| < \alpha  \sum_{i=1}^k |S_i|  = \alpha |V|,
    \end{align*}
    which is a contradiction. This gives outcome 1. \\
    Suppose now that there exists $i \in [k]$ such that $|S_i| \geq \eps |V|$. 
    Rename $A=S_i$ and $B=V \setminus S_i$ for convenience.
    Note that for every vertex $v \in B$, we have $|E(v,A)| \leq 1$.
    Summing over each vertex $v \in B$, we obtain $|E(A,B)| \leq |B|$. 
    Assume for contradiction that we have both $|E(A)|/|A| < \alpha - |B|/|V|$ and $|E(B)|/|B| < \alpha - |B|/|V|$. 
    Then
    \begin{align*}
        |E| = |E(A)| + |E(B)| + |E(A,B)| < \alpha |V|,
    \end{align*}
    which is a contradiction. 
    If $|E(B)|/|B| \geq \alpha - |B|/|V|$, then we immediately obtain outcome $2$.
    If $|E(A)|/|A| \geq \alpha - |B|/|V|$, then we are clearly in outcome $3$.
\end{proof}

We finally show \Cref{thm:Technical-BTdense}.

\begin{proof}[Proof of \Cref{thm:Technical-BTdense}]
    We set $G_0=G$, and we construct $G_{i}$ from $G_{i-1}$ the following way.
    \begin{itemize}
        \item If $G_{i-1}=(V_{i-1}, E_{i-1})$ is globally rigid in $\mathbb{R}$, we terminate the process.
        \item Otherwise, we apply \Cref{thm:DensityIncrem1} to $G_{i-1}$ with $\eps$ to get $V^{*} \subseteq V$ satisfying condition 1, 2 or 3, and we set $G_{i}=G_{i-1}[V^{*}]$.
    \end{itemize}
    Suppose $G_0=(V_0, E_0), \dots, G_t=(V_t, E_t)$ is the construction obtained, and that the process stopped at $G_t$.
    Letting $I_1$, $I_2$ and $I_3$ be respectively the indices $i$ such that the outcome of \Cref{thm:DensityIncrem1} applied to $G_i$ was $1$, $2$ and $3$ respectively.
    Note that we have
    \begin{align}
    \label{eq:Simple-bound-density-increment}
        \frac{|E_t|}{|V_t|} \geq \frac{m}{n}2^{-|I_1|} - |I_2|-\sum_{i=1}^t \frac{|V_{i-1}|-|V_i|}{|V_{i-1}|}
    \end{align}
    Since $1 \leq |V_t| \leq n\eps^{|I_1|} $, we have
    \begin{align}
    \label{eq:BoundI1}
        |I_1| \leq \frac{\log n}{\log (1/\eps)}.
    \end{align}
    Similarly $1 \leq |V_t| \leq n(1-\eps)^{|I_2|} $, and since $\eps < 1/2$, we find that
    \begin{align}
    \label{eq:BoundI2}
        |I_2| \leq \frac{\log n}{\log (1/(1-\eps))} \leq 2\frac{ \log n}{\eps}.
    \end{align}
    Finally, for $n$ sufficiently large, we have
    \begin{align}
    \label{eq:BoundI3}
        \sum_{i=1}^t \frac{|V_{i-1}|-|V_i|}{|V_{i-1}|} \leq \sum_{i=1}^t \sum_{j=|V_i|+1}^{|V_{i-1}|}\frac{1}{j} \leq \sum_{j=1}^{n} \frac{1}{j} \leq 2 \log n. 
    \end{align}
    Plugging \eqref{eq:BoundI1}, \eqref{eq:BoundI2} and \eqref{eq:BoundI3} into \eqref{eq:Simple-bound-density-increment}, we get
    \begin{align}
        |V_t| \geq \frac{|E_t|}{|V_t|} \geq \frac{m}{n}2^{-\frac{\log n}{\log (1/\eps)}} - 2\frac{ \log n}{\eps} - 2 \log n \geq \frac{m}{n}2^{-\frac{\log n}{\log (1/\eps)}} - 3\frac{ \log n}{\eps}.
    \end{align}
    Since $G_t$ is by construction globally rigid in $\mathbb{R}$, this finishes the proof.
\end{proof}

\section{Concluding remarks and further work}
\label{sec:ConcludingRemarks}

As explained in the introduction, Benjamini and Tzalik \cite{benjamini2022determining}, and Girão, Illingworth, Michel, Powierski, and Scott \cite{girao2023reconstructing} studied the reconstruction of an entire point set when distances are revealed according to $G(n,p)$.
Recently, those results were improved by Montgomery, Nenadov, Szabó, and the author \cite{montgomery2024global}.
They showed that a random graph $G$ sampled via the Erd\H{o}s-Rényi evolution becomes globally rigid in $\mathbb{R}$ exactly at the time its minimum degree is $2$, thereby confirming a conjecture posed by Benjamini and Tzalik \cite{benjamini2022determining}.
In other words, whether we fix first the embedding, or generate first the random graph, this does not change the threshold for reconstructibility of the entire point set.
However, the situation is different when the goal is to reconstruct a subset of linear size. 
Indeed, Montgomery, Nenadov, Szabó, and the author showed that for every $\eta >0$, there exists $C>0$, such that, for $G$ sampled as $G(n,C/n)$, with high probability there exists $V' \subseteq V(G)$ such that $|V'| \geq (1-\eta)n$ and $G[V']$ is globally rigid in $\mathbb{R}$.
However, they proved that $1/n$ is not a sharp threshold for the property of having a globally rigid subset in $\mathbb{R}$ of linear size.
More precisely, they showed that there exists $\gamma >0$ such that, for $G$ sampled as $G(n,1.1/n)$, with high probability there does not exist $V' \subseteq V(G)$ such that $G[V']$ is globally rigid in $\mathbb{R}$ and $|V'| \geq \gamma \log n$.
This result shows that the strengthening of \Cref{thm:mainSparseRegime} to global rigidity does not hold, and that the threshold for reconstructing a linear-sized subset of a fixed point set differs from the threshold for the emergence of a globally rigid subgraph of linear size. \\

 Concerning extension to higher dimensions, the question posed by Girão, Illingworth, Michel, Powierski, and Scott \cite{girao2023reconstructing} remains open: what is the threshold at which a linear sized subset of $V$ lying in $\mathbb{R}^d$ can be reconstructed?
 Some progress in this direction was made by Barnes, Petr, Randall Shaw, Sergeev, and the author \cite{barnes2024reconstructing}.
Namely, they proved that $p = \omega(q(n,d))$ with $q(n,d)=n^{-1/\eta(d)+o(1)}$ and $\eta(d)=\frac{d+4}{2}-\frac{1}{d+1}$ is sufficient for reconstructing with high probability a subset of $n-o(n)$ points of a set $V$ consisting of $n$ points in $\mathbb{R}^d$.
However it remains an open problem whether this can be lowered to $p = \omega_d(n^{-1})$. 

\paragraph*{Acknowledgements.} While preparing this manuscript, the author became aware that Let\'{\i}cia Mattos and Tibor Szab\'{o} independently found a counterexample to Benjamini and Tzalik's conjecture similar to \Cref{thm:CounterexampleBT}.
The author would like to thank Julian Sahasrabudhe for many interesting discussions about the subject.
The author is grateful to Carla Groenland for asking whether the methods developed in an earlier version of this article could imply \Cref{thm:weakBTdense}, which was indeed the case.

\bibliographystyle{abbrv}
\renewcommand{\bibname}{Bibliography}
\bibliography{bibliography}

@article{girao2023reconstructing,
  title={Reconstructing a point set from a random subset of its pairwise distances},
  author={Gir{\~a}o, Ant{\'o}nio and Illingworth, Freddie and Michel, Lukas and Powierski, Emil and Scott, Alex},
  journal={SIAM Journal on Discrete Mathematics},
  volume={38},
  number={4},
  pages={2709--2720},
  year={2024},
  publisher={SIAM}
}

@article{garamvolgyi2022global,
  title={Global rigidity of (quasi-) injective frameworks on the line},
  author={Garamv{\"o}lgyi, D{\'a}niel},
  journal={Discrete Mathematics},
  volume={345},
  number={2},
  pages={112687},
  year={2022},
  publisher={Elsevier}
}

@article{ding2014anatomy,
  title={Anatomy of the giant component: The strictly supercritical regime},
  author={Ding, Jian and Lubetzky, Eyal and Peres, Yuval},
  journal={European Journal of Combinatorics},
  volume={35},
  pages={155--168},
  year={2014},
  publisher={Elsevier}
}

@article{barnes2024reconstructing,
  title={Reconstructing almost all of a point set in $\mathbb{R}^{d}$ from randomly revealed pairwise distances},
  author={Barnes, Douglas and Petr, Jan and Portier, Julien and Shaw, Benedict Randall and Sergeev, Alan},
  journal={arXiv preprint arXiv:2401.01882},
  year={2024}
}

@article{benjamini2022determining,
  title={Determining a points configuration on the line from a subset of the pairwise distances},
  author={Benjamini, Itai and Tzalik, Elad},
  journal={arXiv preprint arXiv:2208.13855},
  year={2022}
}

@article{bollobas1988isoperimetric,
  title={The isoperimetric number of random regular graphs},
  author={Bollob{\'a}s, B{\'e}la},
  journal={European Journal of combinatorics},
  volume={9},
  number={3},
  pages={241--244},
  year={1988},
  publisher={Academic Press Ltd. London, UK, UK}
}

@book{bollobas2006art,
  title={The art of mathematics: Coffee time in Memphis},
  author={Bollob{\'a}s, B{\'e}la},
  year={2006},
  publisher={Cambridge University Press}
}

@article{benjamini2014mixing,
  title={The mixing time of the giant component of a random graph},
  author={Benjamini, Itai and Kozma, Gady and Wormald, Nicholas},
  journal={Random Structures \& Algorithms},
  volume={45},
  number={3},
  pages={383--407},
  year={2014},
  publisher={Wiley Online Library}
}

@article{chvatal1991almost,
  title={Almost all graphs with 1.44 n edges are 3-colorable},
  author={Chv{\'a}tal, Vasek},
  journal={Random Structures \& Algorithms},
  volume={2},
  number={1},
  pages={11--28},
  year={1991},
  publisher={Wiley Online Library}
}

@book{janson2011random,
  title={Random graphs},
  author={Janson, Svante and Luczak, Tomasz and Rucinski, Andrzej},
  year={2011},
  publisher={John Wiley \& Sons}
}

@article{connelly2005generic,
  title={Generic global rigidity},
  author={Connelly, Robert},
  journal={Discrete \& Computational Geometry},
  volume={33},
  pages={549--563},
  year={2005},
  publisher={Springer}
}

@article{gortler_characterizing_2012,
  title={Characterizing generic global rigidity},
  author={Gortler, Steven J. and Healy, Alexander D. and Thurston, Dylan P.},
  journal={American Journal of Mathematics},
  volume={132},
  number={4},
  pages={897--939},
  year={2010},
  publisher={Johns Hopkins University Press}
}

@article{lew23randomrigid,
  title={Sharp threshold for rigidity of random graphs},
  author={Lew, Alan and Nevo, Eran and Peled, Yuval and Raz, Orit E.},
  journal={Bulletin of the London Mathematical Society},
  volume={55},
  number={1},
  pages={490--501},
  year={2023},
  publisher={Wiley Online Library}
}

@incollection{jordan2017global,
  title={Global rigidity},
  author={Jord{\'a}n, Tibor and Whiteley, Walter},
  booktitle={Handbook of Discrete and Computational Geometry},
  pages={1661--1694},
  year={2017},
  publisher={Chapman and Hall/CRC}
}

@article{raz2023dense,
  title={Dense graphs have rigid parts},
  author={Raz, Orit E and Solymosi, J{\'o}zsef},
  journal={Discrete \& Computational Geometry},
  volume={69},
  number={4},
  pages={1079--1094},
  year={2023},
  publisher={Springer}
}

@article{montgomery2024global,
  title={Global rigidity of random graphs in $\mathbb{R}$},
  author={Montgomery, Richard and Nenadov, Rajko and Portier, Julien and Szab{\'o}, Tibor},
  journal={arXiv preprint arXiv:2401.10803},
  year={2024}
}

@article{peled2024rigidity,
  title={On the Rigidity of Random Graphs in high-dimensional spaces},
  author={Peled, Yuval and Peleg, Niv},
  journal={arXiv preprint arXiv:2412.13127},
  year={2024}
}

@article{krivelevich2023rigid,
  title={Rigid partitions: from high connectivity to random graphs},
  author={Krivelevich, Michael and Lew, Alan and Michaeli, Peleg},
  journal={Journal of Combinatorial Theory, Series B},
  volume={175},
  pages={126--170},
  year={2025},
  publisher={Elsevier}
}

@article{villanyi2025every,
  title={Every $d(d+1)$-connected graph is globally rigid in $\mathbb{R}^d$},
  author={Vill{\'a}nyi, Soma},
  journal={Journal of Combinatorial Theory, Series B},
  volume={173},
  pages={1--13},
  year={2025},
  publisher={Elsevier}
}

\end{document}